\newtheorem{theo}{Theorem}[section]
\newtheorem{lemm}[theo]{Lemma}
\newtheorem{prop}[theo]{Proposition}
\newtheorem{rema}[theo]{Remark}
\numberwithin{equation}{section}
\begin{document}

\title[Quasi-periodic travelling waves for damped  beams]{
Quasi-periodic travelling waves for a class of damped  beams  on rectangular tori}

\author{Bochao Chen}
\address{College of Mathematics, Jilin University, Changchun, Jilin 130012, P.R.China}
\email{chenbc758@163.com}

\author{Yixian Gao}
\address{School of
Mathematics and Statistics, Center for Mathematics and
Interdisciplinary Sciences, Northeast Normal University, Changchun, Jilin 130024, P.R.China}
\email{gaoyx643@nenu.edu.cn}

\author{Juan J. Nieto}
\address{Departamento de Estadistica, Analisis Matematico y Optimizacion and instituto de matematicas, Universidade de Santiago de
Compostela, 15782 Santiago, Spain}
\email{juanjose.nieto.roig@usc.es}

\thanks{The research of BC was partially supported by  NSFC grant 11901232 and China Postdoctoral Science Foundation Funded Project
2019M651191. The research of YG was partially supported by NSFC grant 11871140,
JJKH20180006KJ and FRFCU2412019BJ005. The research of JJN was partially  supported by the Agencia Estatal de Investigacion (AEI) of Spain, co-financed by the European Fund for Regional Development (FEDER) corresponding to the 2014¨C2020 multiyear financial framework, project MTM2016-75140-P . Moreover, Nieto also thanks partial financial support by Xunta de Galicia under grant ED431C 2019-02.}

\subjclass[2000]{35L75, 37K50}

\keywords{Beam equations; Damping; Quasi-periodic travelling waves; Rectangular tori}

\begin{abstract}
This article concerns a class of beam equations with damping on rectangular tori. When the generators satisfy  certain relationship, by excluding some value of two model parameters, we prove that such models admit small amplitude quasi-periodic travelling wave solutions with two frequencies, which are continuations of two rotating wave solutions  with one frequency.  This result holds  not only for an isotropic torus, but also for an anisotropic torus. The proof  is mainly based on a  Lyapunov--Schmidt reduction together with the implicit function theorem.
\end{abstract}

\maketitle

\section{Introduction}

Consider nonlinear beam equations with damping on rectangular tori
\begin{align}\label{a1}
u_{tt}+\mu\Delta^2u+\alpha u_t+\gamma\Delta^2 u_t+mu=\lambda(u_t)^{2p+1},\quad t\in\mathbb{R},x\in\mathbb{T}^2_{\mathrm{L}},
\end{align}
where $\Delta^2$ is the biharmonic operator with $\Delta=\sum^2_{k=1}\partial^2_{x_k}$, and $p$ is a positive integer.
The model parameter $\mu:= EI/\rho>0$ denotes the elasticity
coefficient, where $\rho$ is the  mass density,   $I$ is the second moment of
area of the beam's cross section,  $E$ is Young's modulus of elasticity and  the product $EI$ is the flexural rigidity. The other
$m >0$ describes the linear stiffness
of the foundation, and the real numbers $\alpha,\gamma,\lambda$ are coefficients of friction. In our model, $\lambda$ is taken as a positive number. The restriction on $\alpha,\gamma$ will be  given later.

In recent years many efforts have been made to understand some properties of beam equations  on the standard
tori $\mathbb T^d = \mathbb R^d/(2\pi \mathbb Z)^d,d\geq1.$  In this paper we intend to give some results on rectangular  tori.
Denote by $\mathbb{T}^2_{\mathrm{L}}$ a 2-dimensional rectangular torus as follows
\begin{align*}
\mathbb{T}^2_{\mathrm{L}}:=(\mathbb{R}/2\pi \mathrm{L}_1\mathbb{Z})\times(\mathbb{R}/2\pi \mathrm{L}_2\mathbb{Z})
\end{align*}
with generators $\mathrm{L}_1>0$, and $\mathrm{L}_2>0$. If  $\mathrm{L}_1$ is ``rationally independent'' of $\mathrm{L}_2$, then $\mathrm{L}_1$ cannot be written as $\mathrm{L}_1=a\mathrm{L}_2$, with a rational coefficient $a$. In this case the corresponding torus is called anisotropic. On the other hand, when $\mathrm{L}_1,\mathrm{L}_2$ are  rational,  the rectangular torus $\mathbb{T}^2_{\mathrm{L}}$ can be reduced to the standard torus $\mathbb{T}^2$ by a simple geometric consideration. That is, by setting $\mathrm{L}_k=a_k/b_k$ for some $a_k,b_k\in\mathbb{N},k=1,2$, then we obtain the scaled standard torus $a\mathbb{T}^2=\mathbb{R}^2/(2\pi a \mathbb{Z})^2 $, where $a$ is taken as the least  common multiple of $a_k,k=1,2$. We are interest in the existence of quasi-periodic traveling wave solutions of the form
\begin{align}\label{c1}
u(t,x)=\varphi(\omega_1t+\mathrm{L}^{-1}_1x_1,\omega_2t+\mathrm{L}^{-1}_2x_2),
\end{align}
where $\omega=(\omega_1,\omega_2)\in\mathbb{R}^2$ are ``rationally independent'' and $\varphi$ is a $(2\pi)^2$-periodic function.

\subsection{Backgrounds and Main Ideas}

The classical linear theory of deformation derives the following Euler--Bernoulli equation
\begin{align*}
\rho(x)u_{tt}+(EI(x)u_{xx})_{xx}=0, \quad t\in\mathbb{R},x\in \Omega\subset\mathbb{R},
\end{align*}
which describes the motion of thin elastic beams for at least two hundred years old, see \cite{weaver1990vibration}. From the physical point of view, in a real process, dissipation plays an important spreading role for
the energy gather arising from the nonlinearity, and the interaction of it with the nonlinearity accompanies the accumulation, balance and dissipation of the energy in the configurations, and governs the
longtime behavior of the dynamical system associated with the corresponding nonlinear  equation. There are two types of distinguished mechanisms. One is external such as ``interaction with
surrounding medium, interface with other physical systems''(\cite{herrmann2008vibration}). The other is internal such as ``increase of heat energy to the detriment of mechanical
energy by virtue of internal friction, thermoelastic
effects''(\cite{herrmann2008vibration}).  We call dissipation mechanisms direct if supplementary dissipation
terms are directly inserted into the original conservative equations. On the
other hand, dissipation mechanisms are called indirect for ``coupling
the mechanical equations governing beam motion to
related dissipation systems with additional dynamics, resulting
in an overall system in which mechanical energy
is dissipated''(\cite[p.379]{Russell1991comparison}). We refer the readers to \cite{Russell1991comparison} for such coupled dissipative systems such as  Euler--Bernoulli  beams with thermoelastic damping and
with shear diffusion damping. Two kinds of linear dissipation mechanisms contained in our model  are direct, see \cite{herrmann2008vibration}. We call the term $u_t$  external or viscous
damping introduced by external, linear dampers. The term $\Delta^2 u_t$ is called internal
or Kelvin--Voigt damping which describes a situation where higher frequencies are more strongly damped than low ones. Moreover, this model involves a nonlinear term depending on velocity. It is worth mentioning that Kosovali\'{c} \cite{kosovali2018quaisperiodic} have investigated quasi-periodic travelling waves for  equation \eqref{a1} with $\lambda=1$ on a 2-dimensional standard torus and  established the existence of small amplitude quasi-periodic travelling wave solutions with two frequencies, which are continuations of rotating wave solutions. Some recent results on damped beam equations can be found in \cite{wang1991asymptotic,liu1998exponential,denk2015structural} and the references there in.

Concerning the rectangular tori or irrational tori, these papers can be mainly divided into two groups. In the first group,
one can consult the articles \cite{bourgain2007strichartz,bourgain2015proof,deng2017strichartz} for Strichartz estimates
and well-posedness of PDEs.
In \cite{deng2017strichartz}, Deng et al. got the Stricharz estimates over large time scale for
Schr\"{o}dinger equations on \emph{generic} rectangular tori.
The other concerns phenomenon of growth of Sobolev norms of PDEs,
for instance \cite{deng2019growth,berti2019long,deng2019growth2}.
In addition, concerning the periodic/quasi-periodic problem,
 Rabinowitz \cite{Rabinowitz1967periodic,Rabinawitz1968periodic} established
 for the first time the existence of  periodic solutions of  forced dissipative wave equations with respect to Dirichlet boundary conditions
\begin{align*}
&u_{tt}-u_{xx}+\alpha u_t=\epsilon f(t,x,u,u_t,u_x),\quad\alpha\neq0,\\
&u_{tt}-u_{xx}+\alpha u_t=\epsilon f(t,x,u,u_t,u_x,u_{tt},u_{tx},u_{xx}),\quad \alpha\neq0.
\end{align*}
Later on, Bourgain \cite{bourgain1999periodic}  provided a detailed proof on the existence of periodic solutions for the following wave equation
\begin{align*}
u_{tt}-u_{xx}+mu+u^2_t=0,\quad m\neq0.
\end{align*}
 In \cite{Calleja2017response}, Calleja et al. obtained response solutions
(i.e., quasi-periodic solutions with the same frequency as the forcing) of several models of nonlinear wave equations with very strong damping, such as $\frac{1}{\epsilon}u_t$ for $\epsilon$ small enough. Recently,
Saha et al. \cite{saha2020can} treated the influence of quasi-periodic gravity waves in the shape of crystals in clouds.
Berti--Montalto \cite{berti2020quasi} proved the existence of small amplitude quasi-periodic standing wave solutions of an ocean with infinite depth under the action of gravity and surface tension.
In \cite{chen2019periodic}, Chen--Pelinovsky presented a conjecture on the possible generalization to the case of quasi-periodic solutions to a  general periodic travelling wave of the modified Korteweg-de Vries equation.
 To the best of our knowledge, there is few article on the existence
of quasi-periodic travelling wave solutions for beam equations with damping on rectangular tori so far.

We define the energy functional $\mathcal{E}$ associated with equation \eqref{a1} by
\begin{align*}
\mathcal E(t)=\int_{\mathbb{T}^2_{\mathrm{L}}}\frac{1}{2}(\partial_tu(t,x))^2+\frac{1}{2}\mu(\Delta u(t,x))^2+\frac{1}{2}m(u(t,x))^2\mathrm{d}x.
\end{align*}
Using periodic boundary conditions and integration by parts yields that
\begin{align*}
\frac{\mathrm{d}}{\mathrm{d}t}\mathcal E(t)=&\int_{\mathbb{T}^2_{\mathrm{L}}}\partial_tu(t,x)(-\mu\Delta^2u(t,x)-\alpha\partial_tu(t,x)-\gamma\partial_t\Delta^2 u(t,x)- mu(t,x))\\
&\quad\quad+\lambda(\partial_tu(t,x))^{2p+2}+\mu\Delta u(t,x)\partial_t\Delta u(t,x)+mu(t,x)\partial_tu(t,x)\mathrm{d}x\\
=&\int_{\mathbb{T}^2_{\mathrm{L}}}-\alpha(\partial_tu(t,x))^2-\gamma\partial_tu(t,x)\partial_t\Delta^2 u(t,x)+\lambda(\partial_tu(t,x))^{2p+2}\mathrm{d}x\\
=&\int_{\mathbb{T}^2_{\mathrm{L}}}-\alpha(\partial_tu(t,x))^2-\gamma\partial_tu(t,x)\Delta^2 \partial_t u(t,x)+\lambda(\partial_tu(t,x))^{2p+2}\mathrm{d}x.
\end{align*}
Since $\Delta^2$ is positive semi-definite and $\lambda$ is positive, if $\alpha\leq0,\gamma\leq0$, then $\mathcal E$ is increasing on $t$. This means that there may be a nontrivial smooth periodic/quasi-periodic solution to equation \eqref{a1} for  either $\alpha>0$, or $\gamma>0$. In our study, we regard the friction coefficients $\alpha,\gamma$ as the bifurcation parameters and take advantage of their smallness. When  generators $\mathrm{L}_1$ and $\mathrm{L}_2$ satisfy  certain relationship, by excluding some value of two model parameters, there exists a sequence of Hopf bifurcations of quasi-periodic solutions for the beam model \eqref{a1}.
More precisely, for some fixed positive generators $\mathrm{L}_1,\mathrm{L}_2$, when $(\mu,m)$ are located  in a dense subset of $\mathbb{R}^{+}\times\mathbb{R}^{+}$, we construct quasi-periodic travelling wave solutions with two frequencies, which are continuations of rotating wave solutions.  Because of Hopf bifurcations, model \eqref{a1} actually describes the motion of a $2$-dimensional beam  subject to self-oscillation, which is also known as ``maintained'', ``sustained'', ``self-excited'', ``self-induced'', ``spontaneous'', ``autonomous'', and ``hunting'' or ``parasitic'' vibration, see \cite{jenkins2013self}.  A self-oscillator can generate and maintain a regular mechanical
periodicity or quasi-periodicity without requiring a similar external periodicity or quasi-periodicity to drive it.
For example,  time delay can cause periodic and quasi-periodic vibrations of vehicle wheels in the experiments of  \cite{takacs2009experiments}.
In mechanical engineering,  vibrations of a mechanical system can be induced by an external source
which acts on the system autonomously.   The phenomenon of self-oscillation is prevalent such as the heartbeat,
the pupil light reflex,  clocks,  heat engines and so on, see \cite{jenkins2013self,longtin1989modelling}.
In \cite{Campbell1995complex,campbell1995limit}, Campbell et al. showed that a harmonic oscillator subject to
damping and time delay  undergoes the bifurcation of not only periodic solutions, but also $2$-dimensional tori.
There are a sequence of Hopf bifurcations for  wave equations with damping and without  external forcing on $1$-dimensional or $d$-dimensional tori, see \cite{kosovalic2019self,kosovalic2019excited}.

Compared with the results on  standard tori given by \cite{kosovali2018quaisperiodic},
the analysis of damped beam equations on  rectangular tori presents significant new obstacles.
We still apply common branching methods including  both a Lyapunov--Schmidt reduction and the implicit function theorem to study the existence of quasi-periodic solutions.
We first linearize  model \eqref{a1} around zero for the friction coefficients $\alpha,\gamma$ vanish.
Then we can obtain an equivalent equation associated with Fourier coefficients.
We further consider the equivalent equation by fixing two frequencies which are rationally independent.
Because two extra parameters corresponding to the rectangular torus are introduced,
this increases the difficulty of solving  the equivalent equation with fixed frequencies. For this, we restrict generators $\mathrm{L}_1$ and $\mathrm{L}_2$ to certain relationship.
Moreover, we have to impose some conditions on model parameters $(\mu,m)$. In particular, for $\mathrm{L}_1=\mathrm{L}_2=1$, the set of model parameters $(\mu,m)$ can be written as
\begin{align*}
\textstyle\bigg\{(\mu,m)\in\mathbb{R}^+\times\mathbb{R}^{+}:
\mu^2\in\mathbb{Q}^+,m^2\in\mathbb{Q}^+,\frac{m}{\mu}\notin\mathbb{N},
\left(\frac{\mu(j^*_1)^4+m}{\mu (j^*_{2})^4+m}\right)^{\frac12}
\in\mathbb{R}^+\backslash\mathbb{Q}\bigg\}.
\end{align*}
That is, by excluding fewer value of two model parameters than in \cite{kosovali2018quaisperiodic}, we can construct small amplitude quasi-periodic travelling wave solutions with two frequencies for model \eqref{a1} on a standard torus.
 Our next purpose is to apply the Lyapunov--Schmidt reduction to obtain a bifurcation equation and a range equation. Finally,  we can solve the  bifurcation equation and the range equation by the implicit function theorem, respectively. In our analysis, we are able to avoid  small divisor problems at infinity owing to the biharmonic operator.

\subsection{Main results}

In this subsection we will introduce two main theorems. The first theorem corresponds to the existence of quasi-periodic travelling wave solutions. The other  addresses the ``directions of bifurcation''.

For fixed positive generators $\mathrm{L}_1,\mathrm{L}_2$, and fixed positive integers $j_1^*\neq \frac{\mathrm{L}_1}{\mathrm{L}_2}j^*_{2}$, the sets of model parameters $(\mu,m)$ are defined  as follows
\begin{align*}
\textstyle\mathcal{S}_{\mu,m}:=\bigg\{(\mu,m)\in\mathbb{R}^{+}\times\mathbb{Q}^{+}:\mu \mathrm{L}^{-4}_1\in\mathbb{Q}^{+},
\frac{m}{\mu \mathrm{L}^{-4}_1}\notin\mathbb{N},\frac{m}{\mu{\mathrm{L}}^{-4}_2}\notin\mathbb{N},
\textstyle\left(\frac{\mu \mathrm{L}^{-4}_1(j^*_1)^4+m}{\mu {\mathrm{L}}^{-4}_2(j^*_{2})^4+m}\right)^{\frac12}
\in\mathbb{R}^+\backslash\mathbb{Q},\nonumber\\
\textstyle\frac{\mathrm{L}^2_2}{\mathrm{L}^2_1}\left(\frac{\mu {\mathrm{L}}^{-4}_1(j^*_1)^4+m}{\mu {\mathrm{L}}^{-4}_2(j^*_{2})^4+m}\right)^{\frac12}
\in\mathbb{R}^+\backslash\mathbb{Q}, \text{ for } \mathrm{L}_1\in\mathbb{R}^+,\mathrm{L}_2\in\mathbb{R}^+,\text{ with }{\mathrm{L}^4_1}/{\mathrm{L}^4_2}\in\mathbb{Q}^+\bigg\}
\end{align*}
and
\begin{align*}
\textstyle\mathcal{S}'_{\mu,m}:=\bigg\{(\mu,m)\in\mathbb{R}^+\times\mathbb{R}^{+}:
\mu^2\in\mathbb{Q}^+,m^2\in\mathbb{Q}^+,\frac{m}{\mu}\in\mathbb{R}^{+}\backslash\mathbb{Q}, \text{ for } \mathrm{L}_1\in\mathbb{R}^+,\mathrm{L}_2\in\mathbb{R}^+,&\\
\text{with } \mathrm{L}^{4}_1\in\mathbb{Q}^+,
\mathrm{L}^{4}_2\in\mathbb{Q}^+\text{ and } {\mathrm{L}^{2}_1}/{\mathrm{L}^{2}_2}\in\mathbb{Q}^+\bigg\}&,
\end{align*}
where $\mathbb{R}^{+}:=\{x\in\mathbb{R}:x>0\},\mathbb{Q}^{+}:=\{x\in\mathbb{Q}:x>0\}$.
\begin{theo}\label{theo1}
Let $\lambda>0$. Fix positive generators $\mathrm{L}_1,\mathrm{L}_2$, and positive integers $j_1^*\neq \frac{\mathrm{L}_1}{\mathrm{L}_2}j^*_{2}$.  If $(\mu,m)$ belong to the set $\mathcal{S}_{\mu,m} $ ({\rm resp.} $\mathcal{S}'_{\mu,m}$), then for amplitudes $\rho=(\rho_1,\rho_2)\simeq0$, for $(\alpha,\gamma)=(\alpha(\rho),\gamma(\rho))\approx0$, with frequencies $\omega=(\omega_1,\omega_2)$ near $\omega_{j^*}=(\omega_{j_1^*},\omega_{j_2^*})$, where
\begin{align*}
\textstyle\omega_{j^*_1}=\left(\frac{\mu \mathrm{L}^{-4}_1(j^*_1)^4+m}{(j^*_1)^2}\right)^{\frac12},\quad\textstyle\omega_{j^*_2}=\left(\frac{\mu \mathrm{L}^{-4}_2(j^*_2)^4+m}{(j^*_2)^2}\right)^{\frac12},
\end{align*}
equation \eqref{a1} admits a  family of small amplitude quasi-periodic travelling wave solutions with two frequencies of the form
\begin{align*}
u(\rho)(t,x)=\varphi(\rho)(\omega_1t+\mathrm{L}^{-1}_1x_1,\omega_2t+\mathrm{L}^{-1}_2x_2),
\end{align*}
where $\varphi(\rho)$ is a real valued function with $(2\pi)^2$-period of the form
\begin{align*}
\textstyle\varphi(\rho)(\omega_1t+\mathrm{L}^{-1}_1x_1,\omega_2t+\mathrm{L}^{-1}_2x_2)=&2\rho_1\cos (\omega_1t+\mathrm{L}^{-1}_1x_1)+2\rho_2\cos(\omega_2t+\mathrm{L}^{-1}_2x_2)\\
&+w(\rho)(\omega_1t+\mathrm{L}^{-1}_1x_1,\omega_2t+\mathrm{L}^{-1}_2x_2),
\end{align*}
where $w(\rho)\in C^{\infty}(\mathbb{T}^2;\mathbb{R})$. The mapping $\rho\longmapsto \varphi(\rho)\in H^s$ is $C^\infty$ for all $s>0$.

Moreover, these quasi-periodic travelling wave solutions of equation \eqref{a1} branch off of rotating wave solutions, in the sense that setting one of the amplitudes to zero, gives a family of rotating wave solution of equation \eqref{a1}. More precisely,
\begin{align*}
&u(0,\rho_2)(t,x)=\varphi(0,\rho_2)(\omega_2t+\mathrm{L}^{-1}_2x_2),\\
&u(\rho_1,0)(t,x)=\varphi(\rho_1,0)(\omega_1t+\mathrm{L}^{-1}_1x_1),
\end{align*}
which are $2\pi$-periodic, respectively.

Finally, the set $\mathcal{S}_{\mu,m}$ ({\rm resp.} $\mathcal{S}'_{\mu,m}$) of parameters $(\mu,m)$ is dense in the  space $\mathbb{R}^{+}\times\mathbb{R}^{+}$.
\end{theo}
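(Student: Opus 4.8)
The plan is to run the standard Hopf-bifurcation machinery via Lyapunov–Schmidt reduction, adapted to the rectangular-torus setting. First I would insert the travelling-wave ansatz \eqref{c1} into \eqref{a1}, so that the PDE on $\mathbb{T}^2_{\mathrm{L}}$ becomes an equation for the $(2\pi)^2$-periodic function $\varphi$ on the fixed torus $\mathbb{T}^2$, with the operators $\partial_t\mapsto \omega\cdot\nabla$ and $\Delta\mapsto \sum_k \mathrm{L}_k^{-2}\partial_{\theta_k}^2$. Passing to Fourier series $\varphi=\sum_{j\in\mathbb{Z}^2}\hat\varphi_j e^{i j\cdot\theta}$, the linearized operator at $(\alpha,\gamma)=0$ has symbol $-(\omega\cdot j)^2+\mu(\sum_k \mathrm{L}_k^{-4}j_k^4 + \text{cross terms}) + m$; evaluated at $\omega=\omega_{j^*}$ the kernel is spanned exactly by the four modes $\pm e_1,\pm e_2$ (here I use the hypothesis $j_1^*\neq \frac{\mathrm{L}_1}{\mathrm{L}_2}j_2^*$ to keep the two families genuinely distinct, and the arithmetic conditions defining $\mathcal{S}_{\mu,m}$, $\mathcal{S}'_{\mu,m}$ to guarantee that \emph{no other} Fourier mode is resonant — this is exactly where the exclusion of $\tfrac{m}{\mu \mathrm{L}_1^{-4}}\in\mathbb{N}$, the irrationality of the ratio under the square root, etc., are used). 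I would then split $H^s(\mathbb{T}^2;\mathbb{R})$ as $V\oplus W$ with $V$ the four-dimensional kernel and $W$ its $L^2$-complement, write $\varphi = 2\rho_1\cos\theta_1 + 2\rho_2\cos\theta_2 + w$ with $w\in W$, and project the nonlinear equation with $P_V$ and $P_W$ to obtain the \emph{range equation} $P_W F(\rho,w,\alpha,\gamma,\omega)=0$ and the \emph{bifurcation equation} $P_V F(\rho,w,\alpha,\gamma,\omega)=0$.

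Next I would solve the range equation. On $W$ the linearized operator $\mathcal{L}_0$ is invertible with bounded inverse — crucially, because of the biharmonic term the symbol grows like $|j|^4$ while the nonlinearity $\lambda(\omega\cdot\nabla\varphi)^{2p+1}$ and the damping terms are of lower order, so $\mathcal{L}_0^{-1}$ is in fact smoothing and there is \emph{no small-divisor problem at infinity}; the only potential small divisors are at finitely many low modes, and these are excluded by the parameter conditions. Hence the implicit function theorem in the scale of spaces $H^s$ yields a unique $w=w(\rho,\alpha,\gamma,\omega)\in C^\infty$, vanishing to second order in $(\rho,\alpha,\gamma)$, solving the range equation; smoothness of $\rho\mapsto \varphi(\rho)\in H^s$ for every $s$ follows since the map is a composition of smooth (Nemytskii-type, with smooth polynomial nonlinearity) maps and the smoothing solution operator. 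Substituting this $w$ into the bifurcation equation gives a finite-dimensional system for $(\alpha,\gamma,\omega)$ as functions of $\rho$; matching real and imaginary parts of the $e_1$ and $e_2$ components, and using that the damping coefficients $\alpha,\gamma$ enter the imaginary part nondegenerately (the transversality / Hopf condition) while $\omega$ adjusts the real part, another application of the implicit function theorem produces $\alpha=\alpha(\rho)$, $\gamma=\gamma(\rho)$, $\omega=\omega(\rho)$ with $(\alpha,\gamma)\to 0$ and $\omega\to\omega_{j^*}$ as $\rho\to 0$. Setting $\rho_2=0$ (resp. $\rho_1=0$) kills the $\theta_2$ (resp. $\theta_1$) dependence throughout the construction — the subspace of functions depending only on $\theta_1$ is invariant under the whole scheme — so one recovers the one-frequency rotating waves, giving the branching statement.

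For the final density assertion I would argue directly from the arithmetic description of the parameter sets. Take any $(\mu_0,m_0)\in\mathbb{R}^+\times\mathbb{R}^+$ and any $\varepsilon>0$. The conditions $\mu \mathrm{L}_1^{-4}\in\mathbb{Q}^+$ and $m\in\mathbb{Q}^+$ (resp. $\mu^2,m^2\in\mathbb{Q}^+$) cut out a set that is still dense, since $\mathbb{Q}^+$ is dense in $\mathbb{R}^+$ and the maps $\mu\mapsto\mu \mathrm{L}_1^{-4}$, $\mu\mapsto\mu^2$ are homeomorphisms of $\mathbb{R}^+$. The remaining constraints are exclusions of a \emph{countable} set of ``bad'' values: $\tfrac{m}{\mu \mathrm{L}_1^{-4}}\notin\mathbb{N}$ and $\tfrac{m}{\mu \mathrm{L}_2^{-4}}\notin\mathbb{N}$ remove countably many rational slopes, and the two square-root quantities being in $\mathbb{R}^+\setminus\mathbb{Q}$ removes, for each fixed rational value of the radicand's ``rational part'', a codimension condition that fails only on a countable set; a countable union of nowhere-dense sets cannot contain any nonempty relatively open subset of the (dense, hence ``large'') rational-parameter set, so one can still find $(\mu,m)\in\mathcal{S}_{\mu,m}$ (resp. $\mathcal{S}'_{\mu,m}$) within $\varepsilon$ of $(\mu_0,m_0)$. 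I expect the main obstacle to be the first block — verifying that the parameter conditions genuinely imply nonresonance of \emph{all} infinitely many remaining Fourier modes and bounded invertibility of $\mathcal{L}_0$ on $W$ uniformly for $\omega$ near $\omega_{j^*}$ — since this is where the rectangular geometry, through the two extra parameters $\mathrm{L}_1,\mathrm{L}_2$, genuinely complicates the spectral picture compared with the standard-torus case of \cite{kosovali2018quaisperiodic}; the density argument, by contrast, is elementary once the sets are understood as rationals minus a countable bad set.
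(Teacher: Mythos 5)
Your overall architecture is the same as the paper's: insert the ansatz, perform a Lyapunov--Schmidt splitting with a four-dimensional kernel $V$, invert the linearized operator on the complement (no small divisors at infinity because of the biharmonic growth of the symbol), and finally solve the four bifurcation equations for $(\omega_1,\omega_2,\alpha,\gamma)$ as functions of $\rho$ by the implicit function theorem, after factoring out $\rho_k$ thanks to the invariance of the single-mode subspaces. However, two steps are genuine gaps rather than routine omissions. The first is the kernel identification: the claim that for $(\mu,m)\in\mathcal{S}_{\mu,m}$ (resp.\ $\mathcal{S}'_{\mu,m}$) the only integer solutions of $-(\omega_{j^*_1}j_1+\omega_{j^*_2}j_2)^2+\mu(\nu_1^2j_1^2+\nu_2^2j_2^2)^2+m=0$ are $(\pm j_1^*,0),(0,\pm j_2^*)$ is asserted, not proved, and it is precisely the technical heart of the rectangular-torus case. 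The paper proves it in Lemmas \ref{le:irrational} and \ref{le:irrational11}: for $\mathcal{S}_{\mu,m}$ one shows that $\omega_{j_1^*}/\omega_{j_2^*}$, $\nu_2^2/\nu_1^2$ and $1$ are rationally independent (this is where \emph{both} irrationality conditions in the definition of $\mathcal{S}_{\mu,m}$ enter), which forces $j_1j_2=0$ and a factorized equation whose spurious root is excluded by $m/(\mu\nu_k^4)\notin\mathbb{N}$; for $\mathcal{S}'_{\mu,m}$ one squares the relation and uses $\mu^2,m^2\in\mathbb{Q}^+$, $m/\mu\notin\mathbb{Q}$ to split it into two Diophantine equations settled by a positivity argument. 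You correctly flag this as the main obstacle, but without it the statement that $V$ is spanned by the four modes is unsupported.

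The second gap is the density argument, whose reasoning is not valid as stated. Membership in $\mathcal{S}_{\mu,m}$ forces $m\in\mathbb{Q}^+$ and $\mu\mathrm{L}_1^{-4}\in\mathbb{Q}^+$, so the ambient ``rational-parameter set'' is countable; it is not a Baire space, and a countable union of nowhere dense subsets of it can exhaust it (as $\mathbb{Q}$ is a countable union of singletons), so ``the bad set is countable, hence the good set is still dense'' is not a legitimate inference. Moreover, requiring $\bigl(\frac{\mu\nu_1^4(j_1^*)^4+m}{\mu\nu_2^4(j_2^*)^4+m}\bigr)^{1/2}\in\mathbb{R}^+\setminus\mathbb{Q}$ is a \emph{positive} arithmetic requirement on a rational radicand, not the exclusion of a thin set from an uncountable one. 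The paper's Lemma \ref{le:density1} is constructive: choose $\tilde\mu$ near $\mu_0$ with $\tilde\mu\nu_1^4\in\mathbb{Q}^+$, note that $m\mapsto\frac{\tilde\mu\nu_1^4(j_1^*)^4+m}{\tilde\mu\nu_2^4(j_2^*)^4+m}$ is continuous and nonconstant (here $j_1^*\neq\frac{\nu_2}{\nu_1}j_2^*$ is used), and apply the intermediate value theorem together with the density of $\mathcal{Q}=\{q\in\mathbb{Q}^+:\sqrt q\notin\mathbb{Q}\}$ to find $\tilde m$ with the ratio in $\mathcal{Q}$; rationality of the ratio then forces $\tilde m\in\mathbb{Q}^+$ automatically, and the second irrationality condition is handled by the same mechanism. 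Your treatment of the analytic part is otherwise consistent with the paper, though note that the paper splits $W$ into high and low modes and proves $C^\infty$ parameter-dependence of $L^{-1}_{\omega,\alpha,\gamma}$ via explicit bounds $|\mathrm{D}^{\delta}\Gamma(j,\omega,\alpha,\gamma)|\leq C|j|^{6|\delta|}$ (Lemma \ref{le:smoothness3}), a loss of powers of $j$ that is absorbed only because the inverse is applied to functions already known to be $C^\infty$ in $\theta$ --- a point your phrase ``smoothing solution operator'' glosses over.
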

\begin{rema}
Observe that $\mathcal{S}_{\mu,m}\cap\mathcal{S}'_{\mu,m}=\emptyset$. In fact, since $j_1^*\neq \frac{\mathrm{L}_1}{\mathrm{L}_2}j^*_{2}$, one has
\begin{align*}
&\textstyle\frac{\mu \mathrm{L}^{-4}_1(j^*_1)^4+m}{\mu {\mathrm{L}}^{-4}_2(j^*_{2})^4+m}\in\mathbb{Q}^{+}\qquad \text{ if } (\mu,m)\in\mathcal{S}_{\mu,m},\\
&\textstyle\frac{\mu \mathrm{L}^{-4}_1(j^*_1)^4+m}{\mu \mathrm{L}^{-4}_2(j^*_{2})^4+m}=\frac{\mu^2\mathrm{L}^{-4}_1\mathrm{L}^{-4}_2(j^*_1)^4(j^*_2)^4-m^2
+m\mu(\mathrm{L}^{-2}_2(j^*_2)^2+\mathrm{L}^{-2}_1(j^*_1)^2)(\mathrm{L}^{-1}_2j^*_2+\mathrm{L}^{-1}_1j^*_1)
(\mathrm{L}^{-1}_2j^*_2-\mathrm{L}^{-1}_1j^*_1)}{\mu^2\mathrm{L}^{-8}_2(j^*_2)^8-m^2}
\in\mathbb{R}^+\backslash\mathbb{Q}\\
&\qquad\qquad\qquad\qquad\text{ if } (\mu,m)\in\mathcal{S}'_{\mu,m}.
\end{align*}
\end{rema}

\begin{theo}\label{theo2}
Under the same conditions in Theorem \ref{theo1},  if there exist nonconstant quasi-periodic travelling wave solutions to equation \eqref{a1}, then either $\alpha>0$, or $\gamma>0$.
\end{theo}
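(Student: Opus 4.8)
The plan is to exploit the energy identity already derived in the introduction. Suppose $u$ is a nonconstant quasi-periodic travelling wave solution of \eqref{a1} of the form \eqref{c1}, so that $u(t,x)=\varphi(\omega_1 t+\mathrm{L}_1^{-1}x_1,\omega_2 t+\mathrm{L}_2^{-1}x_2)$ with $\varphi$ a smooth $(2\pi)^2$-periodic function and $\omega=(\omega_1,\omega_2)$ rationally independent. Recall from the computation preceding Theorem \ref{theo1} that the energy functional $\mathcal E(t)$ satisfies
\begin{align*}
\frac{\mathrm d}{\mathrm dt}\mathcal E(t)=\int_{\mathbb T^2_{\mathrm L}}-\alpha(\partial_t u)^2-\gamma\,\partial_t u\,\Delta^2\partial_t u+\lambda(\partial_t u)^{2p+2}\,\mathrm dx.
\end{align*}
First I would argue that the left-hand side, when averaged in $t$, vanishes: writing $\mathcal E(t)=\mathcal G(\omega_1 t,\omega_2 t)$ for a smooth $(2\pi)^2$-periodic function $\mathcal G$ (this follows by plugging the travelling-wave ansatz into the definition of $\mathcal E$ and changing variables in $x$, which only contributes constant Jacobian factors), rational independence of $\omega$ gives, by the standard unique-ergodicity/Weyl equidistribution argument on $\mathbb T^2$,
\begin{align*}
\lim_{T\to\infty}\frac1T\int_0^T\frac{\mathrm d}{\mathrm dt}\mathcal E(t)\,\mathrm dt=\lim_{T\to\infty}\frac{\mathcal E(T)-\mathcal E(0)}{T}=0,
\end{align*}
since $\mathcal E$ is bounded. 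Hence the time-average of the right-hand side is zero.

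Next I would compute that time-average termwise. Using the travelling-wave structure and $\int_{\mathbb T^2_{\mathrm L}}\partial_t u\,\Delta^2\partial_t u\,\mathrm dx=\int_{\mathbb T^2_{\mathrm L}}(\Delta\partial_t u)^2\,\mathrm dx\ge 0$ by integration by parts (periodic boundary conditions), together with the obvious $\int_{\mathbb T^2_{\mathrm L}}(\partial_t u)^2\,\mathrm dx\ge 0$ and $\int_{\mathbb T^2_{\mathrm L}}(\partial_t u)^{2p+2}\,\mathrm dx\ge 0$, the vanishing of the average forces
\begin{align*}
\alpha\,\Big\langle\!\!\!\Big\langle\int_{\mathbb T^2_{\mathrm L}}(\partial_t u)^2\,\mathrm dx\Big\rangle\!\!\!\Big\rangle+\gamma\,\Big\langle\!\!\!\Big\langle\int_{\mathbb T^2_{\mathrm L}}(\Delta\partial_t u)^2\,\mathrm dx\Big\rangle\!\!\!\Big\rangle=\lambda\,\Big\langle\!\!\!\Big\langle\int_{\mathbb T^2_{\mathrm L}}(\partial_t u)^{2p+2}\,\mathrm dx\Big\rangle\!\!\!\Big\rangle,
\end{align*}
where $\langle\!\langle\cdot\rangle\!\rangle$ denotes the time-average. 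The key point is now that the right-hand side is \emph{strictly positive}: because $u$ is nonconstant and of travelling-wave form with rationally independent $\omega$, $\partial_t u$ cannot vanish identically (if $\partial_t u\equiv0$ then $u$ would be independent of $t$, hence independent of both $x_1$ and $x_2$ along the flow direction, forcing $\varphi$ constant), so $\int_{\mathbb T^2_{\mathrm L}}(\partial_t u)^{2p+2}\,\mathrm dx$ is a nonnegative $(2\pi)^2$-periodic function of $(\omega_1 t,\omega_2 t)$ that is not identically zero, whence its average over $\mathbb T^2$ is $>0$. Since $\lambda>0$, the right-hand side is a positive number, so at least one of the two nonnegative terms on the left must be multiplied by a positive coefficient; as both bracketed averages are $\ge 0$, this means $\alpha>0$ or $\gamma>0$ (one cannot have both $\alpha\le 0$ and $\gamma\le 0$, else the left side would be $\le 0<$ right side).

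The main obstacle I anticipate is the rigorous justification of the two averaging steps — specifically, checking that $\mathcal E(t)$ is genuinely of the form $\mathcal G(\omega t)$ with $\mathcal G$ continuous $(2\pi)^2$-periodic and that the same holds for each integrand appearing in $\frac{\mathrm d}{\mathrm dt}\mathcal E$, so that Weyl equidistribution applies to produce honest spatial-then-$\mathbb T^2$ averages. This requires only the smoothness and periodicity of $\varphi$ (already part of the hypothesis inherited from Theorem \ref{theo1}) plus boundedness of $\mathcal E$, which follows from that same regularity; there are no small-divisor or regularity subtleties here. A minor point to handle cleanly is the degenerate possibility that $\int(\partial_t u)^2\equiv 0$ as a function of time, which I rule out exactly as above by noting it would make $\varphi$ constant, contradicting nonconstancy.
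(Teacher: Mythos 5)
Your argument is correct and follows essentially the same route as the paper, whose proof simply invokes ``an energy argument as above'': your time-averaging of the energy identity, combined with the observation that $\partial_t u\not\equiv 0$ for a nonconstant travelling wave with rationally independent frequencies, is a careful implementation of exactly that argument. (In fact, since $\mathrm{L}_k^{-1}x_k$ sweeps out a full period as $x_k$ ranges over $\mathbb{R}/2\pi\mathrm{L}_k\mathbb{Z}$, each spatial integral is already independent of $t$, so the Weyl equidistribution step can be bypassed and the conclusion read off directly from boundedness of $\mathcal{E}$.)
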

\begin{proof}
The theorem follows from an energy argument as above.
\end{proof}

The outline of this paper is as follows. In Section \ref{sec:2}, we obtain a bifurcation equation together with a range equation  by virtue of the Lyapunov--Schmidt reduction. The projects of Sections \ref{sec:3}--\ref{sec:4} are to solve the range equation and the bifurcation equation via the classical implicit function theorem, respectively. Moreover, we show the  $C^{\infty}$ smoothness of the solutions obtained with respect to the amplitude parameters.

\section{Lyapunov--Schmidt reduction}\label{sec:2}

If we set  $\theta=(\theta_1,\theta_2)$ with $\theta_k=\omega_k t+\nu_kx_k,\nu_k=\mathrm{L}^{-1}_k,k=1,2$, then substituting  the ansatz \eqref{c1} into equation \eqref{a1} yields that
\begin{align}\label{c2}
(\omega\cdot\nabla)^2\varphi+\mu\Delta^2_{\nu}\varphi+\alpha(\omega\cdot\nabla)\varphi+
\gamma(\omega\cdot\nabla)\Delta^2_{\nu}\varphi+m\varphi=\lambda((\omega\cdot\nabla)\varphi)^{2p+1}&,
\end{align}
where
\begin{align*}
&\Delta_{ \nu}=\sum^2_{k=1}\nu^2_{k}\partial^2_{\theta_k},\quad \nu:=(\nu_1,\nu_2),\text{ with }\nu_{k}=\mathrm{L}^{-1}_{k},k=1,2.
\end{align*}
Clearly, the eigenvalues of $-\Delta_{\nu}$ are
\begin{align*}
 \lambda_{j}=\sum_{k=1}^2\nu^2_kj^2_k,\quad j=(j_1,j_2)\in\mathbb{Z}^2.
\end{align*}
The main focus of the present subsection is to decompose the equivalent equation  \eqref{c2} as  a bifurcation equation and a range equation by the Lyapunov--Schmidt reduction.

For $s\geq0$, denote by $H^s$ the Sobolev space  of real-valued functions
\begin{align*}
H^s:=H^s(\mathbb{T}^2;\mathbb{R})=\left\{\varphi(\theta)=\sum\limits_{j\in\mathbb{Z}^2}\varphi_{j}e^{\mathrm{i}j\cdot \theta}: \overline{\varphi_{j}}=\varphi_{-j},\|\varphi\|^2_{s}:=\sum\limits_{j\in\mathbb{Z}^2}(1+|j|^{2s})|\varphi_{j}|^2<\infty\right\},
\end{align*}
where $\overline{\varphi_{j}}$ is the complex conjugate of $\varphi_{j}$ and
\begin{align*}
|j|^2=j\cdot j=\sum_{k=1}^2j_k^{2},\quad \varphi_{j}=\frac{1}{4\pi^2}\int_{\mathbb{T}^2} \varphi(\theta)e^{-\mathrm{i}j\cdot\theta}\mathrm{d}\theta.
\end{align*}
Clearly, the space $(H^s,\|\cdot\|_s)$ is a Banach space as well as a Hilbert space.
If $s>1$, then the space $H^s$ is a Banach algebra with respect to multiplication of functions, that is,
\begin{align*}
\|\varphi_1\varphi_2\|_{{s}}\leq C(s)\|\varphi_1\|_{s}\|\varphi_2\|_{s}, \quad\forall \varphi_1,\varphi_2\in{H}^s.
\end{align*}
For $s>k+1$ with $k\in\mathbb{N}$, one  has
\begin{align*}
H^s\subset C^{k}(\mathbb{T}^2;\mathbb{R})\quad\text{and}\quad\|\varphi\|_{C^k}\leq C\|\varphi\|_s,
\end{align*}
where $C^k(\mathbb{T}^2;\mathbb{R})$ denotes the set consisted  of $k$ times differentiable functions on $\mathbb{T}^2$, with values in $\mathbb{R}$.

Define the linear differential operator
\begin{align*}
L_{\omega,\alpha,\gamma}:H^{s+5}&\longrightarrow H^{s},\\
\varphi&\longmapsto(\omega\cdot\nabla)^2\varphi+\mu\Delta^2_{\nu}\varphi+\alpha(\omega\cdot\nabla)\varphi+
\gamma(\omega\cdot\nabla)\Delta^2_{\nu}\varphi+m\varphi,
\end{align*}
 and the composition operator
\begin{align*}
&F:(\omega,\varphi)\longmapsto \lambda((\omega\cdot\nabla)\varphi)^{2p+1}.
\end{align*}
Then we can rewrite  \eqref{c2}  as
\begin{align}\label{f:eq}
L_{\omega,\alpha,\gamma}\varphi=F(\omega,\varphi).
\end{align}
In order to solve \eqref{f:eq}, we have to introduce the smoothness of the above composition operator.
\begin{lemm}\label{le:smoothness}
Let $p$  be a positive integer. For $\omega=(\omega_1,\omega_2)\in\mathbb{R}^2$, define
\begin{align*}
F:\mathbb{R}^2\times H^s\longrightarrow H^{s-1},\quad (\omega,\varphi)\longmapsto \lambda((\omega\cdot\nabla)\varphi)^{2p+1}.
\end{align*}
For $s\geq3$, the mapping $F$ is $C^\infty$ in the Fr\'{e}chet sense with respect to $(\omega_k,\varphi)$ with
\begin{align*}
&\partial_{\omega_k} F(\omega,\varphi)=\lambda(2p+1)((\omega\cdot\nabla)\varphi)^{2p}\partial_{\theta_k}\varphi,\quad k=1,2,\\
&\mathrm{D}F(\omega,\varphi)[h]=\lambda(2p+1)((\omega\cdot\nabla)\varphi)^{2p}(\omega\cdot\nabla)h.
\end{align*}
\end{lemm}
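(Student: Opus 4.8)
The plan is to prove the claimed $C^\infty$ smoothness of the composition operator $F(\omega,\varphi)=\lambda((\omega\cdot\nabla)\varphi)^{2p+1}$ by reducing everything to two facts: that the first-order differential operator $(\omega\cdot\nabla)$ maps $H^s$ boundedly into $H^{s-1}$, and that for $s>1$ the space $H^s$ is a Banach algebra under multiplication (both recalled in the excerpt). First I would note that $(\omega\cdot\nabla)\varphi=\omega_1\partial_{\theta_1}\varphi+\omega_2\partial_{\theta_2}\varphi$ is linear and continuous from $\mathbb{R}^2\times H^s$ to $H^{s-1}$, with $\|(\omega\cdot\nabla)\varphi\|_{s-1}\le(|\omega_1|+|\omega_2|)\|\varphi\|_s$; since $s\ge3$ we have $s-1\ge2>1$, so $H^{s-1}$ is an algebra. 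Hence raising $(\omega\cdot\nabla)\varphi$ to the power $2p+1$ keeps us in $H^{s-1}$, and $F$ is well defined with $\|F(\omega,\varphi)\|_{s-1}\le \lambda C(s-1)^{2p}(|\omega_1|+|\omega_2|)^{2p+1}\|\varphi\|_s^{2p+1}$.

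Next I would establish Fr\'echet differentiability. Write $F$ as the composition $F=\lambda\,\Pi\circ(\omega\cdot\nabla)$, where $\Pi:H^{s-1}\to H^{s-1}$, $\Pi(g)=g^{2p+1}$, is the pointwise $(2p+1)$-st power on the Banach algebra $H^{s-1}$, and $(\omega,\varphi)\mapsto(\omega\cdot\nabla)\varphi$ is a bilinear (hence smooth) map into $H^{s-1}$. The power map $\Pi$ on a commutative Banach algebra is a polynomial map and is therefore $C^\infty$, with $\mathrm{D}\Pi(g)[k]=(2p+1)g^{2p}k$; this follows from the binomial-type expansion $(g+k)^{2p+1}=g^{2p+1}+(2p+1)g^{2p}k+\sum_{j\ge2}\binom{2p+1}{j}g^{2p+1-j}k^j$ together with the algebra estimate, which bounds the remainder by $C\|g,k\|^{2p-1}\|k\|^2=o(\|k\|)$. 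Then the chain rule for Fr\'echet derivatives gives, for the partial derivative in $\omega_k$ (using $\partial_{\omega_k}(\omega\cdot\nabla)\varphi=\partial_{\theta_k}\varphi$),
\begin{align*}
\partial_{\omega_k}F(\omega,\varphi)=\lambda\,\mathrm{D}\Pi\big((\omega\cdot\nabla)\varphi\big)[\partial_{\theta_k}\varphi]=\lambda(2p+1)\big((\omega\cdot\nabla)\varphi\big)^{2p}\partial_{\theta_k}\varphi,
\end{align*}
and similarly, differentiating in $\varphi$ along $h\in H^s$,
\begin{align*}
\mathrm{D}F(\omega,\varphi)[h]=\lambda\,\mathrm{D}\Pi\big((\omega\cdot\nabla)\varphi\big)[(\omega\cdot\nabla)h]=\lambda(2p+1)\big((\omega\cdot\nabla)\varphi\big)^{2p}(\omega\cdot\nabla)h,
\end{align*}
which are exactly the formulas in the statement. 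Smoothness of all higher derivatives follows because $\Pi$ is literally a polynomial (so $\mathrm{D}^{2p+2}\Pi\equiv0$) composed with the smooth bilinear map $(\omega\cdot\nabla)$; alternatively one can iterate the chain rule and check each multilinear term lies in $H^{s-1}$ by the algebra property. One should also verify continuity of $(\omega,\varphi)\mapsto \mathrm{D}F(\omega,\varphi)$ as a map into $\mathcal{L}(\mathbb{R}^2\times H^s,H^{s-1})$, which again reduces to continuity of multiplication $H^{s-1}\times H^{s-1}\to H^{s-1}$ and of $(\omega\cdot\nabla)$.

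I do not expect a genuine obstacle here; the statement is essentially a packaging lemma. The only point requiring mild care is the bookkeeping of which Sobolev index is an algebra: one loses one derivative from $(\omega\cdot\nabla)$, so the multiplications must be carried out in $H^{s-1}$, and this is why the hypothesis $s\ge3$ (equivalently $s-1\ge2>1$) is imposed rather than merely $s>1$. A secondary bookkeeping point is that the estimates must be uniform in $\omega$ on bounded sets of $\mathbb{R}^2$ so that $F$ is jointly smooth rather than just separately smooth in $\omega$ and in $\varphi$; this is automatic from the explicit polynomial dependence on $(\omega_1,\omega_2)$. If one prefers to avoid invoking the abstract smoothness of the power map on a Banach algebra, the same conclusion can be reached by a direct $2p+1$-fold application of the Leibniz-type product estimate, expanding $((\omega\cdot\nabla)(\varphi+h))^{2p+1}$ by the binomial theorem and estimating each resulting term in $H^{s-1}$ via the algebra inequality.
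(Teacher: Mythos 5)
Your proposal is correct. Note that the paper gives no proof of this lemma at all---it simply refers to \cite{kosovalic2019self}---and your argument (continuity of the bilinear map $(\omega,\varphi)\mapsto(\omega\cdot\nabla)\varphi$ from $\mathbb{R}^2\times H^s$ to $H^{s-1}$, the Banach algebra property of $H^{s-1}$ for $s\geq 3$, smoothness of the polynomial power map $g\mapsto g^{2p+1}$ on that algebra, and the chain rule) is precisely the standard argument carried out in that reference, so you are taking essentially the same route while supplying the details the paper omits.
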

\begin{proof}
The  proof of the lemma is as seen in \cite{kosovalic2019self}.
\end{proof}

Linearizing equation \eqref{f:eq} about $\varphi=0$ at $\alpha=\gamma=0$ yields that
\begin{align*}
(\omega\cdot\nabla)^2\varphi+ \mu\Delta^2_{\nu}\varphi+m\varphi=0.
\end{align*}
In the Fourier basis $e^{\mathrm{i}j\cdot\theta}$, we have the following
\begin{align}\label{c4}
-(\omega_1j_1+\omega_2j_2)^2+ \mu(\nu^2_1j^{2}_1+\nu^2_2j^{2}_2)^2+m=0.
\end{align}
Since $m$ is  positive, $j=0$ is not a solution of equation \eqref{c4}. Moreover, due to the fact
\begin{align*}
\lim_{|j|\rightarrow\infty}-(\omega_1j_1+\omega_2j_2)^2+ \mu(\nu^2_1j^{2}_1+\nu^2_2j^{2}_2)^2+m=\infty,
\end{align*}
there exists no infinite $j\in\mathbb{Z}^2$ satisfying equation \eqref{c4}. The means that there is no small divisor problems at infinity. In other words, there are only finitely many Fourier modes satisfying equation \eqref{c4}. In particular, for fixed $j^*_k\in\mathbb{Z},k=1,2$, it is easy to see that
\begin{flalign*}
&\mathrm{(i)}\quad \text{$(\omega_{j^*_1},\omega_2,j^*_1,0)$ with free parameters $\omega_2$ and $\textstyle\omega_{j^*_1}^2=\frac{\mu\nu^4_1(j^*_1)^4+m}{(j^*_1)^2}$},&\\
&\mathrm{(ii)}\quad \text{$(\omega_1,\omega_{j^*_2},0,j^*_2)$ with free parameters $\omega_1$ and $\textstyle\omega_{j^*_2}^2=\frac{\mu\nu^4_2(j^*_2)^4+m}{(j^*_2)^2}$}&
\end{flalign*}
are solutions to  equation \eqref{c4}. From now on, we consider the ``critical frequency''
\begin{align*}
\omega_{j^*}=(\omega_{j^*_1},\omega_{j^*_2})\quad\text{with }\omega_{j^*_k}={\textstyle{\left(\frac{\mu\nu^4_k(j^*_k)^4+m}{(j^*_k)^2}\right)^{\frac12}}},k=1,2.
\end{align*}
We are now focused on solving equation \eqref{c4} with  $\omega=\omega_{j^*}$, that is,
\begin{align}\label{f:fixfrequency}
-(\omega_{j^*_1}j_1+\omega_{j^*_2}j_2)^2+\mu(\nu^{2}_1j^2_1+\nu^{2}_2j^2_2)^2+m=0.
\end{align}
In view of the expressions of $\omega_{j^*_k},k=1,2$, equation \eqref{f:fixfrequency} is equivalent to
\begin{align}\label{E:irrational}
2j_1j_2\frac{\omega_{j^*_1}}{\omega_{j^*_2}}-\frac{\mu\nu^{4}_1\left(j^4_1-(j^*_1)^2 j^2_1\right)+\mu\nu^{4}_2\left(j^4_2-(j^*_2)^2 j^2_2\right)+2\mu\nu^2_1\nu^2_2 j^2_1j^2_2+m(1-\frac{j^2_1}{(j^*_1)^2}-\frac{j^2_2}{(j^*_2)^2})}{\mu\nu^4_2(j^{*}_2)^2+\frac{m}{(j^*_2)^2}}=0.
\end{align}

For fixed positive numbers $\nu_1,\nu_2$, and fixed positive integers $j_1^*\neq \frac{\nu_2}{\nu_1}j^*_{2}$, we define the following set of parameters $(\mu,m)$ by
\begin{align*}
\textstyle\tilde{\mathcal{S}}_{\mu,m}:=\bigg\{(\mu,m)\in\mathbb{R}^{+}\times\mathbb{Q}^{+}:\mu\nu^4_1\in\mathbb{Q}^{+},
\frac{m}{\mu\nu^4_1}\notin\mathbb{N},\frac{m}{\mu\nu^4_2}\notin\mathbb{N},
\textstyle\left(\frac{\mu \nu^{4}_1(j^*_1)^4+m}{\mu \nu^{4}_2(j^*_{2})^4+m}\right)^{\frac12}
\in\mathbb{R}^+\backslash\mathbb{Q},&\nonumber\\
\textstyle\frac{\nu^2_1}{\nu^2_2}\left(\frac{\mu \nu^{4}_1(j^*_1)^4+m}{\mu \nu^{4}_2(j^*_{2})^4+m}\right)^{\frac12}
\in\mathbb{R}^+\backslash\mathbb{Q},\text{ for } \nu_1\in\mathbb{R}^+,\nu_2\in\mathbb{R}^+,\text{ with }{\nu^4_2}/{\nu^4_1}\in\mathbb{Q}^+\bigg\}&.
\end{align*}
Remark that $\tilde{\mathcal{S}}_{\mu,m}$ and $\mathcal{S}_{\mu,m}$ are two identical sets. Moreover, the relationship $\nu^4_2/\nu^4_1\in\mathbb{Q}^{+}$ between $\nu_1$ and $\nu_2$ shows that
\begin{align*}
\mu\nu^4_2=\mu\nu^4_1{\frac{\nu^4_2}{\nu^4_1}}\in\mathbb{Q}^{+}.
\end{align*}
\begin{lemm}\label{le:irrational}
For fixed positive numbers $\nu_1,\nu_2$ satisfying $\nu^4_2/\nu^4_1\in\mathbb{Q}^{+}$, and fixed positive integers $j_1^*\neq \frac{\nu_2}{\nu_1}j^*_{2}$, if $(\mu,m)$ belong to $\tilde{\mathcal{S}}_{\mu,m}$, then there exist only  four solutions
\begin{align*}
 j=(\pm j^*_1,0),\quad(0,\pm j^*_2)
\end{align*}
satisfying equation \eqref{f:fixfrequency}.
\end{lemm}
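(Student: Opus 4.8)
The plan is to analyze equation \eqref{f:fixfrequency} by separating the cases according to whether $j_1$ and $j_2$ vanish. First I would verify that the four claimed points $(\pm j^*_1,0)$ and $(0,\pm j^*_2)$ are indeed solutions: this is immediate from the very definitions of $\omega_{j^*_1}$ and $\omega_{j^*_2}$, since for $j=(j^*_1,0)$ the left-hand side of \eqref{f:fixfrequency} collapses to $-(\omega_{j^*_1}j^*_1)^2+\mu\nu^4_1(j^*_1)^4+m=-(\mu\nu^4_1(j^*_1)^4+m)+\mu\nu^4_1(j^*_1)^4+m=0$, and symmetrically for the other three. The content of the lemma is therefore the \emph{uniqueness}: no other $j\in\mathbb{Z}^2$ can satisfy \eqref{f:fixfrequency}. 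By Lemma~\ref{le:irrational}'s hypothesis and the finiteness remark already established (there are only finitely many Fourier modes solving \eqref{c4}, hence \eqref{f:fixfrequency}), I only need to exclude the remaining finitely many candidates.

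The core of the argument splits into three cases. \textbf{Case $j_1=0$, $j_2\neq0$:} equation \eqref{f:fixfrequency} reduces to $-\omega_{j^*_2}^2j_2^2+\mu\nu^4_2j_2^4+m=0$, i.e. $j_2^2\bigl(\mu\nu^4_2(j^*_2)^2+\tfrac{m}{(j^*_2)^2}\bigr)=\mu\nu^4_2j_2^4+m$ after substituting $\omega_{j^*_2}^2=(\mu\nu^4_2(j^*_2)^4+m)/(j^*_2)^2$; rearranging gives $\mu\nu^4_2(j^*_2)^2j_2^2+\tfrac{m}{(j^*_2)^2}j_2^2=\mu\nu^4_2j_2^4+m$, which factors as $\mu\nu^4_2 j_2^2\bigl((j^*_2)^2-j_2^2\bigr)=m\bigl(1-\tfrac{j_2^2}{(j^*_2)^2}\bigr)$, i.e. $\bigl((j^*_2)^2-j_2^2\bigr)\bigl(\mu\nu^4_2 j_2^2(j^*_2)^2-m\bigr)=0$. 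The first factor forces $j_2=\pm j^*_2$; the second would force $\mu\nu^4_2 = m/\bigl(j_2^2(j^*_2)^2\bigr)$, i.e. $m/(\mu\nu^4_2)=j_2^2(j^*_2)^2\in\mathbb{N}$, contradicting the defining condition $m/(\mu\nu^4_2)\notin\mathbb{N}$ in $\tilde{\mathcal{S}}_{\mu,m}$. \textbf{Case $j_2=0$, $j_1\neq0$:} symmetric, using $m/(\mu\nu^4_1)\notin\mathbb{N}$, yielding only $j_1=\pm j^*_1$. \textbf{Case $j_1\neq0$ and $j_2\neq0$ (the main obstacle):} here I use the reformulation \eqref{E:irrational}. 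The term $2j_1j_2\,\omega_{j^*_1}/\omega_{j^*_2}$ is a nonzero rational multiple of $\omega_{j^*_1}/\omega_{j^*_2}$, while the remaining fraction in \eqref{E:irrational} has numerator and denominator that are rational combinations of $\mu\nu^4_1,\mu\nu^4_2,m$ — all rational by membership in $\tilde{\mathcal{S}}_{\mu,m}$ (recall $\mu\nu^4_2=\mu\nu^4_1\cdot\nu^4_2/\nu^4_1\in\mathbb{Q}^+$ and $m\in\mathbb{Q}^+$) — hence that fraction is a rational number. Therefore \eqref{E:irrational} would express $\omega_{j^*_1}/\omega_{j^*_2}$, times the nonzero rational $2j_1j_2$, as a rational number, forcing $\omega_{j^*_1}/\omega_{j^*_2}\in\mathbb{Q}$. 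But
\[
\frac{\omega_{j^*_1}}{\omega_{j^*_2}}=\frac{(\mu\nu^4_1(j^*_1)^4+m)^{1/2}/j^*_1}{(\mu\nu^4_2(j^*_2)^4+m)^{1/2}/j^*_2}=\frac{j^*_2}{j^*_1}\left(\frac{\mu\nu^4_1(j^*_1)^4+m}{\mu\nu^4_2(j^*_2)^4+m}\right)^{1/2},
\]
which is irrational precisely because $\bigl((\mu\nu^4_1(j^*_1)^4+m)/(\mu\nu^4_2(j^*_2)^4+m)\bigr)^{1/2}\in\mathbb{R}^+\backslash\mathbb{Q}$ by the definition of $\tilde{\mathcal{S}}_{\mu,m}$ (multiplying an irrational by the nonzero rational $j^*_2/j^*_1$ stays irrational). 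This contradiction rules out the mixed case entirely.

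I expect the bookkeeping in the mixed case to be the delicate point: one must check carefully that \emph{every} coefficient appearing in the big fraction of \eqref{E:irrational} — namely the pieces $\mu\nu^4_1(j_1^4-(j^*_1)^2j_1^2)$, $\mu\nu^4_2(j_2^4-(j^*_2)^2j_2^2)$, $2\mu\nu^2_1\nu^2_2 j_1^2j_2^2$, $m(1-j_1^2/(j^*_1)^2-j_2^2/(j^*_2)^2)$, and the denominator $\mu\nu^4_2(j^*_2)^2+m/(j^*_2)^2$ — lies in $\mathbb{Q}$. The only slightly non-obvious one is $\mu\nu^2_1\nu^2_2$; but $(\mu\nu^2_1\nu^2_2)^2=\mu^2\nu^4_1\nu^4_2=(\mu\nu^4_1)(\mu\nu^4_2)\in\mathbb{Q}^+$, and moreover $\mu\nu^2_1\nu^2_2=\mu\nu^4_1\cdot(\nu^4_2/\nu^4_1)^{1/2}$; since $\nu^4_2/\nu^4_1\in\mathbb{Q}^+$ one only needs this particular square root to be handled — here one uses that the combination actually entering \eqref{E:irrational} can be regrouped so that only $\mu\nu^4_1$, $\mu\nu^4_2$ and $m$ (all rational) appear, or equivalently one notes the extra hypothesis $\frac{\nu^2_1}{\nu^2_2}(\omega_{j^*_1}/\omega_{j^*_2})\in\mathbb{R}^+\backslash\mathbb{Q}$ in $\tilde{\mathcal{S}}_{\mu,m}$ is included precisely to absorb the cross term $2j_1j_2\nu^2_1/\nu^2_2$ that arises when one instead writes $\mu(\nu^2_1j^2_1+\nu^2_2j^2_2)^2$ out and divides through by $\mu\nu^4_2$. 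Once the rationality of all coefficients is secured, the contradiction with the irrationality of $\omega_{j^*_1}/\omega_{j^*_2}$ (or of $\frac{\nu^2_1}{\nu^2_2}\cdot\omega_{j^*_1}/\omega_{j^*_2}$, depending on which normalization one adopts) closes the proof. Assembling the three cases with the finiteness remark then gives exactly the four solutions $(\pm j^*_1,0)$, $(0,\pm j^*_2)$.
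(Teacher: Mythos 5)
Your verification of the four solutions, your treatment of the two axis cases ($j_1=0$ or $j_2=0$) via the factorizations $(m-\mu\nu_2^4(j_2^*)^2j_2^2)((j_2^*)^2-j_2^2)=0$ and its analogue, and your mixed-case argument in the situation $\nu_2^2/\nu_1^2\in\mathbb{Q}^+$ are all correct and essentially coincide with the paper's first case. The genuine gap is in the mixed case $j_1j_2\neq0$ when $\nu_2^2/\nu_1^2$ is irrational, which the hypothesis $\nu_2^4/\nu_1^4\in\mathbb{Q}^+$ allows (e.g.\ $\nu_1=1$, $\nu_2=2^{1/4}$; these are exactly Cases 4 and 6 of Remark \ref{remark1}). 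Your central claim that the big fraction in \eqref{E:irrational} is rational fails there, because its numerator contains the cross term $2\mu\nu_1^2\nu_2^2\,j_1^2j_2^2$ and $\mu\nu_1^2\nu_2^2=\mu\nu_1^4\,(\nu_2^4/\nu_1^4)^{1/2}$ is then irrational. You flag this, but neither proposed repair works as stated: the suggested ``regrouping so that only $\mu\nu_1^4$, $\mu\nu_2^4$, $m$ appear'' is impossible, since an irrational number cannot be a rational combination of those rational quantities; and saying the extra hypothesis $\frac{\nu_1^2}{\nu_2^2}\frac{\omega_{j_1^*}}{\omega_{j_2^*}}\notin\mathbb{Q}$ is ``included precisely to absorb the cross term'' is a gesture, not an argument.

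What is actually needed (and is the heart of the paper's second case) is to show that \eqref{E:irrational}, rewritten as \eqref{E:aa}, cannot hold as a nontrivial rational relation $aX+bY+c=0$ with $X=\omega_{j_1^*}/\omega_{j_2^*}$, $Y=\nu_2^2/\nu_1^2$, $a=2j_1j_2\neq0$, $b=-\frac{2j_1^2j_2^2\mu\nu_1^4}{\mu\nu_2^4(j_2^*)^2+m/(j_2^*)^2}\neq0$ and $c\in\mathbb{Q}$. The paper proves the rational independence of $X$, $Y$, $1$ by squaring: since $X^2$ and $Y^2$ are rational, $bc\neq0$ would force $Y\in\mathbb{Q}$ (contradiction), while $c=0$ gives $X/Y\in\mathbb{Q}$, contradicting exactly the second irrationality condition in $\tilde{\mathcal{S}}_{\mu,m}$, and $b=0$ is handled by the first condition. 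Only after this do all coefficients in \eqref{E:aa} vanish, forcing $j_1j_2=0$ and reducing to the axis cases. This squaring/independence step is missing from your proposal, so the irrational-ratio subcase is not actually handled; also note it is not a matter of ``choosing a normalization'' between the two irrationality hypotheses --- both are used, in different branches of the same argument.
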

\begin{proof}
We first consider ${\nu^2_2}/{\nu^2_1}\in\mathbb{Q}^{+}$. It is clear that
\begin{align*}
\frac{\mu\nu^{4}_1\left(j^4_1-(j^*_1)^2 j^2_1\right)+\mu\nu^{4}_2\left(j^4_2-(j^*_2)^2 j^2_2\right)+2\mu\nu^4_1\frac{\nu^2_2}{\nu^2_1} j^2_1j^2_2+m(1-\frac{j^2_1}{(j^*_1)^2}-\frac{j^2_2}{(j^*_2)^2})}{\mu\nu^4_2(j^{*}_2)^2+\frac{m}{(j^*_2)^2}}\in\mathbb{Q}.
\end{align*}
Moreover, since $\omega_{j^*_1}/\omega_{j^*_2}$ is irrational, $\omega_{j^*_1}/\omega_{j^*_2},1$  are rationally independent. As a result,
\begin{align}\label{E:coefficient1}
&2j_1j_2=0,\\
&\textstyle\mu\nu^{4}_1\left(j^4_1-(j^*_1)^2 j^2_1\right)+\mu\nu^{4}_2\left(j^4_2-(j^*_2)^2 j^2_2\right)+2\mu\nu^2_1\nu^2_2 j^2_1j^2_2+m(1-\frac{j^2_1}{(j^*_1)^2}-\frac{j^2_2}{(j^*_2)^2})=0.\label{E:coefficient2}
\end{align}
Obviously, formula \eqref{E:coefficient1} implies that either $j_1=0$, or $j_2=0$.

For  $j_1=0$,  equality \eqref{E:coefficient2} can be reduced to
\begin{align*}
-\mu \nu^{4}_2(j^*_2)^2j^2_2\left((j^*_2)^2-j^2_2\right)+m\left((j^*_2)^2-j^2_2\right)=0,
\end{align*}
which then gives
\begin{align*}
(m-\mu \nu^{4}_2(j^*_2)^2j^2_2)\left((j^*_2)^2-j^2_2\right)=0.
\end{align*}
If $(j^*_2)^2-j^2_2\neq0$, then $m-\mu \nu^{4}_2(j^*_2)^2j^2_2=0$, which gives
\begin{align*}
(j^*_2)^2j^2_2=\frac{m}{\mu \nu^{4}_2}.
\end{align*}
This is impossible because of $\frac{m}{\mu \nu^{4}_2}\notin\mathbb{N}$. As a consequence,
\begin{align*}
(j^*_2)^2-j^2_2=0
\end{align*}
Thus $j_2=\pm j^*_2$.

If $j_2=0$, then we simplify  equality \eqref{E:coefficient2} to
\begin{align*}
(m-\mu \nu^{4}_1(j^*_1)^2j^2_1)\left((j^*_1)^2-j^2_1\right)=0.
\end{align*}
Since $\frac{m}{\mu \nu^{4}_1}\notin\mathbb{N}$, proceeding the similar technique as above yields that $j_1=\pm j^*_1$.

On the other hand, if  ${\nu^2_2}/{\nu^2_1}\in\mathbb{R}^+\backslash\mathbb{Q}$, then we cannot apply directly a similar argument as above due to the fact
\begin{align*}
\frac{\mu\nu^{4}_1\left(j^4_1-(j^*_1)^2 j^2_1\right)+\mu\nu^{4}_2\left(j^4_2-(j^*_2)^2 j^2_2\right)+2\mu\nu^4_1\frac{\nu^2_2}{\nu^2_1} j^2_1j^2_2+m(1-\frac{j^2_1}{(j^*_1)^2}-\frac{j^2_2}{(j^*_2)^2})}{\mu\nu^4_2(j^{*}_2)^2+\frac{m}{(j^*_2)^2}}
\in\mathbb{R}\backslash\mathbb{Q}.
\end{align*}
Since ${\nu^2_2}/{\nu^2_1}\in\mathbb{R}^+\backslash\mathbb{Q}$, we rewrite the equivalent equation \eqref{E:irrational} as
\begin{align}\label{E:aa}
2 j_1j_2\frac{\omega_{j^*_1}}{\omega_{j^*_2}}-\frac{2j^2_1j^2_2\mu\nu^4_1}{\mu\nu^4_2(j^{*}_2)^2+\frac{m}{(j^*_2)^2}}\frac{\nu^2_2}{\nu^2_1}-\frac{\mu\nu^{4}_1\left(j^4_1-(j^*_1)^2 j^2_1\right)+\mu\nu^{4}_2\left(j^4_2-(j^*_2)^2 j^2_2\right)+m(1-\frac{j^2_1}{(j^*_1)^2}-\frac{j^2_2}{(j^*_2)^2})}{\mu\nu^4_2(j^{*}_2)^2+\frac{m}{(j^*_2)^2}}\nonumber\\
=0.
\end{align}
Observe that
\begin{align*}
&2 j_1j_2\in\mathbb{Q},
\quad\frac{2j^2_1j^2_2\mu\nu^4_1}{\mu\nu^4_2(j^{*}_2)^2+\frac{m}{(j^*_2)^2}}\in \mathbb{Q},\\
&\frac{\mu\nu^{4}_1\left(j^4_1-(j^*_1)^2 j^2_1\right)+\mu\nu^{4}_2\left(j^4_2-(j^*_2)^2 j^2_2\right)+m(1-\frac{j^2_1}{(j^*_1)^2}-\frac{j^2_2}{(j^*_2)^2})}{\mu\nu^4_2(j^{*}_2)^2+\frac{m}{(j^*_2)^2}}\in\mathbb{Q}.
\end{align*}
Now we assert that $\frac{\omega_{j^*_1}}{\omega_{j^*_2}},\frac{\nu^2_2}{\nu^2_1},1$ are rationally independent.

Suppose that we could seek three rational numbers
$a,b$ and $c$, not all of which are zero, such that
\begin{align}\label{f:incommesurable}
a\frac{\omega_{j^*_1}}{\omega_{j^*_2}}+b\frac{\nu^2_2}{\nu^2_1}+c=0.
\end{align}
By squaring the above equation, we get
\begin{align*}
a^2\frac{\omega^2_{j^*_1}}{\omega^2_{j^*_2}}=b^2\frac{\nu^{4}_2}{\nu^4_1}+2bc\frac{\nu^2_2}{\nu^2_1}+c^2,
\end{align*}
that is,
\begin{align*}
2bc\frac{\nu^2_2}{\nu^2_1}=a^2\frac{\mu\nu^4_1(j^{*}_1)^2+\frac{m}{(j^*_1)^2}}{\mu\nu^4_2(j^{*}_2)^2+\frac{m}{(j^*_2)^2}}-b^2\frac{\nu^{4}_2}{\nu^4_1}-c^2.
\end{align*}
If $bc\neq0$, then
\begin{align*} \frac{\nu^2_2}{\nu^2_1}=\frac{a^2\frac{\mu\nu^4_1(j^{*}_1)^2+\frac{m}{(j^*_1)^2}}{\mu\nu^4_2(j^{*}_2)^2+\frac{m}{(j^*_2)^2}}-b^2\frac{\nu^{4}_2}{\nu^4_1}-c^2}{2bc}
\end{align*}
Since $\frac{\nu^2_2}{\nu^2_1}\in\mathbb{R}^+\backslash\mathbb{Q}, \frac{a^2\frac{\mu\nu^4_1(j^{*}_1)^2+\frac{m}{(j^*_1)^2}}{\mu\nu^4_2(j^{*}_2)^2+\frac{m}{(j^*_2)^2}}-b^2\frac{\nu^{4}_2}{\nu^4_1}-c^2}{2bc}\in\mathbb{Q}$, this leads to a contradiction. Hence either $b=0$, or $c=0$. In the first case equation \eqref{f:incommesurable} becomes
\begin{align*}
a\omega_{j^*_1}+c\omega_{j^*_2}=0.
\end{align*}
Hence it follows from $\left(\frac{\mu \nu^{4}_1(j^*_1)^4+m}{\mu \nu^{4}_2(j^*_{2})^4+m}\right)^{\frac12}
\in\mathbb{R}^+\backslash\mathbb{Q}$ that $a=c=0$. In the latter, using \eqref{f:incommesurable} yields that
\begin{align*}
a\frac{\omega_{j^*_1}}{\omega_{j^*_2}}+b\frac{\nu^2_2}{\nu^2_1}=0.
\end{align*}
According to the fact $\frac{\nu^2_1}{\nu^2_2}\left(\frac{\mu \nu^{4}_1(j^*_1)^4+m}{\mu \nu^{4}_2(j^*_{2})^4+m}\right)^{\frac12}\in\mathbb{R}^{+}\backslash\mathbb{Q}$, one has $a=b=0$.

By the above assertion, from \eqref{E:aa}, we obtain
\begin{align}\label{E:coefficient3}
&2 j_1j_2=0,\quad 2j^2_1j^2_2\mu\nu^4_1=0,\\
&\textstyle\mu\nu^{4}_1\left(j^4_1-(j^*_1)^2 j^2_1\right)+\mu\nu^{4}_2\left(j^4_2-(j^*_2)^2 j^2_2\right)+m(1-\frac{j^2_1}{(j^*_1)^2}-\frac{j^2_2}{(j^*_2)^2})=0.\label{E:coefficient4}
\end{align}
Two equalities in \eqref{E:coefficient3} imply that either $j_1=0$, or $j_2=0$. In the first case, equality \eqref{E:coefficient4} can be simplified to
\begin{align*}
(m-\mu \nu^{4}_2(j^*_2)^2j^2_2)\left((j^*_2)^2-j^2_2\right)=0.
\end{align*}
In the latter
\begin{align*}
(m-\mu \nu^{4}_1(j^*_1)^2j^2_1)\left((j^*_1)^2-j^2_1\right)=0.
\end{align*}
By a similar argument as shown in the case ${\nu^2_2}/{\nu^2_1}\in\mathbb{Q}^{+}$, we derive that  $j=(\pm j^*_1,0),(0,\pm j^*_2)$ are solutions of equation \eqref{f:fixfrequency}.

As a result,  we arrive at the conclusion of the lemma.
\end{proof}
Now we make some remarks on the set $\tilde{\mathcal{S}}_{\mu,m}$ of parameters $(\mu,m)$.
\begin{rema}\label{remark1}
In fact, the relationship $\nu^4_2/\nu^4_1\in\mathbb{Q}^{+}$ between $\nu_1$ and $\nu_2$ means that either $\nu^4_1\in\mathbb{Q}^{+},\nu^4_2\in\mathbb{Q}^{+}$, or $\nu^4_1\in\mathbb{R}^+\backslash\mathbb{Q},\nu^4_2\in\mathbb{R}^+\backslash\mathbb{Q}$. Moreover, since
\begin{align*}
\nu^4_2/\nu^4_1\in\mathbb{Q}^{+},\mu\nu^4_1\in\mathbb{Q}^{+}\Longleftrightarrow\mu\nu^4_1\in\mathbb{Q}^{+},\mu\nu^4_2\in\mathbb{Q}^{+},
\end{align*}
one has
\begin{flalign*}
&\text{$\mathrm{(i)}$\quad If $\nu^4_1\in\mathbb{Q}^{+},\nu^4_2\in\mathbb{Q}^{+}$,  then $\mu\in\mathbb{Q}^{+}$};&\\
&\text{$\mathrm{(ii)}$\quad If $\nu^4_1\in\mathbb{R}^+\backslash\mathbb{Q},\nu^4_2\in\mathbb{R}^+\backslash\mathbb{Q}$,  then $\mu\in\mathbb{R}^+\backslash\mathbb{Q}$}.&
\end{flalign*}
More precisely, the set $\tilde{\mathcal{S}}_{\mu,m}$ of parameters $(\mu,m)$ can be clearly expressed as the following several cases  by the range of $\nu_1,\nu_2$.

On the one hand, for $\nu^4_1\in\mathbb{Q}^{+}$ and $\nu^4_2\in\mathbb{Q}^{+}$, the set $\tilde{\mathcal{S}}_{\mu,m}$ is equivalent to the following four cases.

\vspace{0.3cm}
\textit{\underline{\textbf{Case 1}}}: $\nu^{4}_1\in\mathbb{Q}^+,\nu^{4}_2\in\mathbb{Q}^+$, with either $\nu^{2}_1\in\mathbb{Q}^+,\nu^{2}_2\in\mathbb{Q}^+$,
or $\nu^{2}_1\in\mathbb{R}^+\backslash\mathbb{Q},\nu^{2}_2\in\mathbb{R}^+\backslash\mathbb{Q}$ and $\nu^{2}_2/\nu^{2}_1\in\mathbb{Q}^+$.
For fixed positive integers $j_1^*\neq \frac{\nu_2}{\nu_1}j^*_{2}$, we express the set of parameters $(\mu,m)$  as follows
\begin{align*}
\textstyle\tilde{\mathcal{S}}_{\mu,m,1}:=\bigg\{(\mu,m)\in\mathbb{Q}^{+}\times\mathbb{Q}^+:\frac{m}{\mu\nu^4_1}\notin\mathbb{N},\frac{m}{\mu\nu^4_2}\notin\mathbb{N},\left(\frac{\mu \nu^{4}_1(j^*_1)^4+m}{\mu \nu^{4}_2(j^*_{2})^4+m}\right)^{\frac12}
\in\mathbb{R}^+\backslash\mathbb{Q}\bigg\}.
\end{align*}

\textit{\underline{\textbf{Case 2}}}: $\nu^{4}_1\in\mathbb{Q}^+,\nu^{4}_2\in\mathbb{Q}^+$, with $\nu^{2}_1\in\mathbb{R}^+\backslash\mathbb{Q}$, $\nu^{2}_2\in\mathbb{Q}^+$. For fixed positive integers $j_1^*\neq \frac{\nu_2}{\nu_1}j^*_{2}$, the set of parameters $(\mu,m)$ can be written  as follows
\begin{align*}
\textstyle\tilde{\mathcal{S}}_{\mu,m,2}:=\bigg\{(\mu,m)\in\mathbb{Q}^{+}\times\mathbb{Q}^+:\frac{m}{\mu\nu^4_1}\notin\mathbb{N},\frac{m}{\mu\nu^4_2}\notin\mathbb{N},\left(\frac{\mu \nu^{4}_1(j^*_1)^4+m}{\mu \nu^{4}_2(j^*_{2})^4+m}\right)^{\frac12}
\in\mathbb{R}^+\backslash\mathbb{Q},&\\
\textstyle\frac{1}{\nu^2_1}\left(\frac{\mu \nu^{4}_1(j^*_1)^4+m}{\mu \nu^{4}_2(j^*_{2})^4+m}\right)^{\frac12}
\in\mathbb{R}^+\backslash\mathbb{Q}\bigg\}&.
\end{align*}

\textit{\underline{\textbf{Case 3}}}: $\nu^{4}_1\in\mathbb{Q}^+,\nu^{4}_2\in\mathbb{Q}^+$, with $\nu^{2}_1\in\mathbb{Q}^+$, $\nu^{2}_2\in\mathbb{R}^+\backslash\mathbb{Q}$. For fixed positive integers $j_1^*\neq \frac{\nu_2}{\nu_1}j^*_{2}$, the set of parameters $(\mu,m)$ can be expressed  as follows
\begin{align*}
\textstyle\tilde{\mathcal{S}}_{\mu,m,3}:=\bigg\{(\mu,m)\in\mathbb{Q}^{+}\times\mathbb{Q}^+:\frac{m}{\mu\nu^4_1}\notin\mathbb{N},
\frac{m}{\mu\nu^4_2}\notin\mathbb{N},\left(\frac{\mu \nu^{4}_1(j^*_1)^4+m}{\mu \nu^{4}_2(j^*_{2})^4+m}\right)^{\frac12}
\in\mathbb{R}^+\backslash\mathbb{Q},&\\
\textstyle\frac{1}{\nu^2_2}\left(\frac{\mu \nu^{4}_1(j^*_1)^4+m}{\mu \nu^{4}_2(j^*_{2})^4+m}\right)^{\frac12}
\in\mathbb{R}^+\backslash\mathbb{Q}\bigg\}&.
\end{align*}

\textit{\underline{\textbf{Case 4}}}: $\nu^{4}_1\in\mathbb{Q}^+,\nu^{4}_2\in\mathbb{Q}^+$, with $\nu^{2}_1\in\mathbb{R}^+\backslash\mathbb{Q}$, $\nu^{2}_2\in\mathbb{R}^+\backslash\mathbb{Q}$ and $\nu^{2}_2/\nu^{2}_1\in\mathbb{R}^+\backslash\mathbb{Q}$.
For fixed positive integers $j_1^*\neq \frac{\nu_2}{\nu_1}j^*_{2}$, we write the set of parameters $(\mu,m)$ as follows
\begin{align*}
\textstyle\tilde{\mathcal{S}}_{\mu,m,4}:=\bigg\{(\mu,m)\in\mathbb{Q}^{+}\times\mathbb{Q}^+:\frac{m}{\mu\nu^4_1}\notin\mathbb{N},\frac{m}{\mu\nu^4_2}\notin\mathbb{N},\left(\frac{\mu \nu^{4}_1(j^*_1)^4+m}{\mu \nu^{4}_2(j^*_{2})^4+m}\right)^{\frac12}
\in\mathbb{R}^+\backslash\mathbb{Q},&\\
\textstyle\frac{\nu^2_1}{\nu^2_2}\left(\frac{\mu \nu^{4}_1(j^*_1)^4+m}{\mu \nu^{4}_2(j^*_{2})^4+m}\right)^{\frac12}
\in\mathbb{R}^+\backslash\mathbb{Q}\bigg\}&.
\end{align*}

On the other hand, for $\nu^4_1\in\mathbb{R}^+\backslash\mathbb{Q}$ and $\nu^4_2\in\mathbb{R}^+\backslash\mathbb{Q}$, the set $\tilde{\mathcal{S}}_{\mu,m}$ is equivalent to the following two cases.

\vspace{0.3cm}
\textit{\underline{\textbf{Case 5}}}: $\nu^4_2/\nu^4_1\in\mathbb{Q}^{+}$, with $\nu^{4}_1\in\mathbb{R}^+\backslash\mathbb{Q},\nu^{4}_2\in\mathbb{R}^+\backslash\mathbb{Q}$ and ${\nu^2_2}/{\nu^2_1}\in\mathbb{Q}^{+}$. For fixed positive integers $j_1^*\neq \frac{\nu_2}{\nu_1}j^*_{2}$,  the set of parameters $(\mu,m)$ can be expressed  as follows
\begin{align*}
\textstyle\tilde{\mathcal{S}}_{\mu,m,5}:=\bigg\{(\mu,m)\in(\mathbb{R}^{+}\backslash\mathbb{Q})\times\mathbb{Q}^+:\mu\nu^4_1\in\mathbb{Q}^{+},\frac{m}{\mu\nu^4_1}\notin\mathbb{N},\frac{m}{\mu\nu^4_2}\notin\mathbb{N},
\left(\frac{\mu \nu^{4}_1(j^*_1)^4+m}{\mu \nu^{4}_2(j^*_{2})^4+m}\right)^{\frac12}
\in\mathbb{R}^+\backslash\mathbb{Q}\bigg\}.
\end{align*}

\textit{\underline{\textbf{Case 6}}}: $\nu^4_2/\nu^4_1\in\mathbb{Q}^{+}$, with $\nu^{4}_1\in\mathbb{R}^+\backslash\mathbb{Q},\nu^{4}_2\in\mathbb{R}^+\backslash\mathbb{Q}$ and ${\nu^2_2}/{\nu^2_1}\in\mathbb{R}^{+}\backslash\mathbb{Q}$. For fixed positive integers $j_1^*\neq \frac{\nu_2}{\nu_1}j^*_{2}$,  the set of parameters $(\mu,m)$ can be written as follows
\begin{align*}
\textstyle\tilde{\mathcal{S}}_{\mu,m,6}:=\bigg\{(\mu,m)\in(\mathbb{R}^{+}\backslash\mathbb{Q})\times\mathbb{Q}^+:
\mu\nu^4_1\in\mathbb{Q}^{+},\frac{m}{\mu\nu^4_1}\notin\mathbb{N},\frac{m}{\mu\nu^4_2}\notin\mathbb{N},\left(\frac{\mu \nu^{4}_1(j^*_1)^4+m}{\mu \nu^{4}_2(j^*_{2})^4+m}\right)^{\frac12}
\in\mathbb{R}^+\backslash\mathbb{Q},&\\
\textstyle\frac{\nu^2_1}{\nu^2_2}\left(\frac{\mu \nu^{4}_1(j^*_1)^4+m}{\mu \nu^{4}_2(j^*_{2})^4+m}\right)^{\frac12}
\in\mathbb{R}^+\backslash\mathbb{Q}\bigg\}&.
\end{align*}
\end{rema}
The following lemma corresponds to the density of the set $\tilde{\mathcal{S}}_{\mu,m}$ in the  space $\mathbb{R}^+\times\mathbb{R}^+$.

\begin{lemm}\label{le:density1}
For fixed positive numbers $\nu_1,\nu_2$ satisfying $\nu^4_2/\nu^4_1\in\mathbb{Q}^{+},$
and fixed positive integers $j_1^*\neq \frac{\nu_2}{\nu_1}j^*_{2}$, the set $\tilde{\mathcal{S}}_{\mu,m}$ is dense in the  space $\mathbb{R}^+\times\mathbb{R}^+$.
\end{lemm}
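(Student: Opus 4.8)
The plan is to show that $\tilde{\mathcal{S}}_{\mu,m}$ is dense by exhibiting, near any target point $(\mu_0,m_0)\in\mathbb{R}^+\times\mathbb{R}^+$ and any $\epsilon>0$, an explicit pair $(\mu,m)\in\tilde{\mathcal{S}}_{\mu,m}$ with $|\mu-\mu_0|+|m-m_0|<\epsilon$. The construction will be carried out case by case according to the six subcases of Remark \ref{remark1}, since the arithmetic conditions on $(\mu,m)$ differ (in Cases 1--4 one needs $\mu\in\mathbb{Q}^+$, in Cases 5--6 one needs $\mu\in\mathbb{R}^+\backslash\mathbb{Q}$ with $\mu\nu_1^4\in\mathbb{Q}^+$). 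In every case the strategy is the same: first pin down $\mu$ (or a dense family of admissible $\mu$'s) close to $\mu_0$ using the algebraic constraint, then choose $m$ close to $m_0$ from within a countable-intersection-of-open-dense type set so that the remaining conditions hold.

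First I would handle Cases 1--4, where the constraint forces $\mu\in\mathbb{Q}^+$. Since $\mathbb{Q}^+$ is dense in $\mathbb{R}^+$, I fix any $\mu\in\mathbb{Q}^+$ with $|\mu-\mu_0|<\epsilon/2$; automatically $\mu\nu_1^4,\mu\nu_2^4\in\mathbb{Q}^+$ by the hypothesis $\nu_i^4\in\mathbb{Q}^+$. It then remains to choose $m\in\mathbb{Q}^+$ near $m_0$ avoiding the ``bad'' set. The conditions $\frac{m}{\mu\nu_1^4}\notin\mathbb{N}$ and $\frac{m}{\mu\nu_2^4}\notin\mathbb{N}$ exclude only countably many values of $m$. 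The condition $\left(\frac{\mu\nu_1^4(j_1^*)^4+m}{\mu\nu_2^4(j_2^*)^4+m}\right)^{1/2}\in\mathbb{R}^+\backslash\mathbb{Q}$ says the rational number $R(m):=\frac{\mu\nu_1^4(j_1^*)^4+m}{\mu\nu_2^4(j_2^*)^4+m}$ is not a perfect square of a rational; since $j_1^*\neq\frac{\nu_2}{\nu_1}j_2^*$ the Möbius map $m\mapsto R(m)$ is nonconstant, hence injective, so the preimage of the squares $\mathbb{Q}^2$ is again only countable, as is the analogous preimage for the extra condition $\frac{1}{\nu_i^2}\sqrt{R(m)}\notin\mathbb{Q}$ or $\frac{\nu_1^2}{\nu_2^2}\sqrt{R(m)}\notin\mathbb{Q}$ appearing in Cases 2--4 (when $\nu_i^2$ or $\nu_2^2/\nu_1^2$ is irrational, $\frac{1}{\nu_i^2}\sqrt{R(m)}\in\mathbb{Q}$ forces $R(m)$ to be zero or to an irrational, a contradiction unless a countable exceptional set of $m$; this needs a short sub-argument). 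Since $(\mu_0-\epsilon/2,\mu_0+\epsilon/2)\cap\mathbb{Q}^+$ is infinite and, for each such $\mu$, $\mathbb{Q}^+\cap(m_0-\epsilon/2,m_0+\epsilon/2)$ is infinite minus countably many points, such an $m$ exists.

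Next I would handle Cases 5--6, where $\nu_1^4,\nu_2^4$ are both irrational but $\mu\nu_1^4\in\mathbb{Q}^+$ is required; equivalently $\mu\in\frac{1}{\nu_1^4}\mathbb{Q}^+$, which is a dense subset of $\mathbb{R}^+$ consisting of irrationals (except $\mu=0$), so again pick $\mu$ in it within $\epsilon/2$ of $\mu_0$; then $\mu\nu_1^4\in\mathbb{Q}^+$ and $\mu\nu_2^4=\mu\nu_1^4\cdot\frac{\nu_2^4}{\nu_1^4}\in\mathbb{Q}^+$ as noted in the text. The selection of $m\in\mathbb{Q}^+$ proceeds exactly as before, removing countably many values for the two non-integrality conditions and the (one or two) irrationality-of-square-root conditions, using that $j_1^*\neq\frac{\nu_2}{\nu_1}j_2^*$ makes the relevant rational function of $m$ nonconstant. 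Combining all six cases, given any $(\mu_0,m_0)$ and $\epsilon>0$ we have produced $(\mu,m)\in\tilde{\mathcal{S}}_{\mu,m}$ within $\epsilon$, proving density.

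The main obstacle I anticipate is not the density mechanics but the bookkeeping of the auxiliary irrationality conditions: one must verify carefully that each condition of the form ``$\sqrt{R(m)}\notin\mathbb{Q}$'', ``$\frac{1}{\nu_i^2}\sqrt{R(m)}\notin\mathbb{Q}$'', ``$\frac{\nu_1^2}{\nu_2^2}\sqrt{R(m)}\notin\mathbb{Q}$'' fails only on a countable set of $m$ (for fixed admissible $\mu$), and that these conditions are simultaneously satisfiable together with $m\in\mathbb{Q}^+$ — i.e. that we are not accidentally forced out of $\mathbb{Q}^+$. This hinges on $R(m)$ being a nonconstant rational function of $m$ with rational coefficients (guaranteed by $j_1^*\neq\frac{\nu_2}{\nu_1}j_2^*$, which prevents numerator and denominator from being proportional) so that pulling back the countable sets $\mathbb{Q}^2$, $\nu_i^4\mathbb{Q}^2$, $\frac{\nu_2^4}{\nu_1^4}\mathbb{Q}^2$ under $m\mapsto R(m)$ stays countable; once that is in hand the density is immediate from density of $\mathbb{Q}^+$ (resp. $\frac{1}{\nu_1^4}\mathbb{Q}^+$) in $\mathbb{R}^+$.
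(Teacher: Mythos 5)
Your choice of $\mu$ (rational in Cases 1--4, or in $\nu_1^{-4}\mathbb{Q}^+$ in Cases 5--6, so that $\mu\nu_1^4,\mu\nu_2^4\in\mathbb{Q}^+$) and your treatment of the conditions $m/(\mu\nu_k^4)\notin\mathbb{N}$ match the paper's proof. The gap is in the final step selecting $m$. Membership in $\tilde{\mathcal S}_{\mu,m}$ forces $m\in\mathbb{Q}^+$, so your candidate set $\mathbb{Q}^+\cap(m_0-\epsilon/2,m_0+\epsilon/2)$ is itself countable; the observation that the bad set (the preimage under $m\mapsto R(m)$ of the rational squares, plus the analogous preimages for the extra conditions of Cases 2--4 and 6) is countable therefore proves nothing, since a countable set minus a countable set can be empty. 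Worse, the bad set $\{m\in\mathbb{Q}^+: R(m)\in(\mathbb{Q}^+)^2\}$ is actually infinite and dense in the interval (solve $R(m)=t^2$ for rational $t$ and invert the M\"obius map), so a counting argument cannot work: you must positively exhibit a rational $m$ in the interval for which $R(m)$ (and, where required, $\nu_1^4R(m)/\nu_2^4$ or $R(m)/\nu_i^4$) is not a rational square. Injectivity of $R$ is beside the point here.

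The paper closes exactly this gap by reversing the order of choice: it notes that $\mathcal Q=\{q\in\mathbb{Q}^+:\sqrt q\notin\mathbb{Q}\}$ is dense in $\mathbb{R}^+$, that $\mathrm{g}(m)=\frac{\tilde\mu\nu_1^4(j_1^*)^4+m}{\tilde\mu\nu_2^4(j_2^*)^4+m}$ is continuous and nonconstant on the interval $\mathcal U$ (this is where $j_1^*\neq\frac{\nu_2}{\nu_1}j_2^*$ enters), hence $\mathrm{g}(\mathcal U)$ is a nondegenerate interval meeting $\mathcal Q$; picking $q=\mathrm{g}(\tilde m)\in\mathcal Q$ and inverting the M\"obius map with rational coefficients gives $\tilde m\in\mathbb{Q}^+$ automatically, with $\sqrt{\mathrm{g}(\tilde m)}$ irrational. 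You could repair your argument either by adopting this intermediate-value-theorem device (for $\nu_2^2/\nu_1^2\notin\mathbb{Q}$ one needs $q$ such that both $\sqrt q$ and $\sqrt{q\,\nu_1^4/\nu_2^4}$ are irrational, which the same density argument supplies), or by a genuine number-theoretic selection of $m$ (e.g.\ forcing $R(m)$ to have odd valuation at a large prime), but as written the ``remove countably many $m$'' step does not establish existence.
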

\begin{proof}
Observe that the following set
\begin{align*}
\mathcal{Q}:=\left\{q\in\mathbb{Q}^{+}:(q)^{\frac12}\in\mathbb{R}^{+}\backslash \mathbb{Q}\right\}
\end{align*}
is dense in $\mathbb{R}^{+}$ by using the density of the set of rational numbers composed by ratio of distinct primes. Let $({\mu}_0,m_0)\in\mathbb{R}^{+}\times\mathbb{R}^+$. For all $\epsilon>0$, we want to look for $(\tilde{\mu},\tilde{m})\in \tilde{\mathcal{S}}_{\mu,m}\cap\mathcal{B}_{\epsilon}(\mu_0,m_0)$, where
\begin{align*}
\mathcal{B}_{\epsilon}(\mu_0,m_0):=\{(\mu,m):|\mu-\mu_0|<\epsilon,|m-m_0|<\epsilon\}.
\end{align*}

For fixed $\nu_1>0,\nu_2>0$, we first consider ${\nu^2_2}/{\nu^2_1}\in\mathbb{Q}^{+}$. Clearly, for $\nu^4_1\in\mathbb{Q}^{+}$, there is $\tilde{\mu}\in(\mu_0-\epsilon,\mu_0+\epsilon)\cap\mathbb{Q}^+$ such that $\tilde{\mu}\nu^4_1\in\mathbb{Q}^{+}$. If $\nu^4_1\in\mathbb{R}^+\backslash\mathbb{Q}$,
then we can find $\tilde{\mu}\in(\mu_0-\epsilon,\mu_0+\epsilon)\cap(\mathbb{R}^+\backslash\mathbb{Q})$ satisfying $\tilde{\mu}\nu^4_1\in\mathbb{Q}^{+}$.
From the fact $\nu^4_2/\nu^4_1\in\mathbb{Q}^{+}$, it follows that $\tilde{\mu}\nu^4_2\in\mathbb{Q}^{+}$.
Hence we can look for $m'\in(m_0-\epsilon,m_0+\epsilon)$ with $\frac{m'}{\tilde{\mu}\nu^4_1}\notin\mathbb{N},
\frac{m'}{\tilde{\mu}\nu^4_2}\notin\mathbb{N}$. As a result, there exists an open interval $\mathcal{U}$ contained in $(m_0-\epsilon,m_0+\epsilon)$ satisfying $\frac{m}{\tilde{\mu}\nu^4_1}\notin\mathbb{N},
\frac{m}{\tilde{\mu}\nu^4_2}\notin\mathbb{N},\forall m\in\mathcal{U}$. For fixed $\nu_1,\nu_2,j^*_1,j^*_2$ and fixed $\tilde{\mu}\in(\mu_0-\epsilon,\mu_0+\epsilon)$, we further introduce a mapping as follows
\begin{align*}
\textstyle\mathrm{g}:\mathcal{U}\longrightarrow\mathbb{R}^+,\quad
m\longmapsto{{\frac{\tilde{\mu}\nu^4_1(j^*_1)^4+m}{\tilde{\mu}\nu^4_2(j^*_{2})^4+m}}}.
\end{align*}
Since $j_1^*\neq \frac{\nu_2}{\nu_1}j^*_{2}$, the function $\mathrm{g}$ is not a constant. In view of  the intermediate value theorem together with the density of $\mathcal Q$, there is $\tilde{m}\in\mathcal{U}$ such that
\begin{align*}
\mathrm{g}(\tilde{m})\in\mathbb{Q}^+,\quad (\mathrm{g}(\tilde{m}))^{\frac{1}{2}}\in\mathbb{R}^+\backslash\mathbb{Q},
\end{align*}
which leads to  $\tilde{m}\in\mathbb{Q}^+$. Therefore, $(\tilde{\mu},\tilde{m})\in \tilde{\mathcal{S}}_{\mu,m}\cap\mathcal{B}_{\epsilon}(\mu_0,m_0)$.

On the other hand, let ${\nu^2_2}/{\nu^2_1}\in\mathbb{R}^+\backslash\mathbb{Q}$ for fixed $\nu_1>0,\nu_2>0$. Denote \begin{align*}
&\textstyle\tilde{\mathcal{S}}^1_{\mu,m}:=\bigg\{(\mu,m)\in\mathbb{R}^{+}\times\mathbb{Q}^{+}:\mu\nu^4_1\in\mathbb{Q}^{+},
\frac{m}{\mu\nu^4_1}\notin\mathbb{N},\frac{m}{\mu\nu^4_2}\notin\mathbb{N},
\textstyle\left(\frac{\mu \nu^{4}_1(j^*_1)^4+m}{\mu \nu^{4}_2(j^*_{2})^4+m}\right)^{\frac12}
\in\mathbb{R}^+\backslash\mathbb{Q}\bigg\},\\
&\textstyle\tilde{\mathcal{S}}^2_{\mu,m}:=\bigg\{(\mu,m)\in\mathbb{R}^{+}\times\mathbb{Q}^{+}:\mu\nu^4_1\in\mathbb{Q}^{+},
\frac{m}{\mu\nu^4_1}\notin\mathbb{N},\frac{m}{\mu\nu^4_2}\notin\mathbb{N},\textstyle\frac{\nu^2_1}{\nu^2_2}\left(\frac{\mu \nu^{4}_1(j^*_1)^4+m}{\mu \nu^{4}_2(j^*_{2})^4+m}\right)^{\frac12}
\in\mathbb{R}^+\backslash\mathbb{Q}\bigg\}.
\end{align*}
By a similar argument as above, the sets $\tilde{\mathcal{S}}^1_{\mu,m},\tilde{\mathcal{S}}^2_{\mu,m}$ are dense in the  space $\mathbb{R}^+\times\mathbb{R}^+$. Then $\tilde{\mathcal{S}}^1_{\mu,m}\cap\tilde{\mathcal{S}}^2_{\mu,m}$ is dense in the  space $\mathbb{R}^+\times\mathbb{R}^+$, that is, we obtain the density of $\tilde{\mathcal{S}}_{\mu,m}$  in the  space $\mathbb{R}^+\times\mathbb{R}^+$.

Hence we complete the proof of the lemma.
\end{proof}

The rational coefficients $\mu\nu^4_1,\mu\nu^4_2,m$ play key roles in the proof of Lemma \ref{le:irrational}.  Notice that
\begin{flalign*}
&\text{$\mathrm{(i)}$\quad If either $\mu\nu^4_1\in\mathbb{R}^+\backslash\mathbb{Q},\mu\nu^4_2\in\mathbb{Q}^{+}$, or $\mu\nu^4_1\in\mathbb{Q}^+,\mu\nu^4_2\in\mathbb{R}^+\backslash\mathbb{Q}$,  then $\mu\nu^2_1\nu^2_2\in\mathbb{R}^+\backslash\mathbb{Q}$};&\\
&\text{$\mathrm{(ii)}$\quad If $\mu\nu^4_1\in\mathbb{R}^+\backslash\mathbb{Q},\mu\nu^4_2\in\mathbb{R}^+\backslash\mathbb{Q}$,  then either $\mu\nu^2_1\nu^2_2\in\mathbb{Q}^+$, or $\mu\nu^2_1\nu^2_2\in\mathbb{R}^+\backslash\mathbb{Q}$}.&
\end{flalign*}
As a result, at least two of $\mu\nu^4_1,\mu\nu^4_2,\mu\nu^2_1\nu^2_2$ are irrational numbers. We just consider
\begin{align*}
&(\mu,m)\in\mathbb{R}^+\times\mathbb{R}^{+},\text{ with }
\mu^2\in\mathbb{Q}^+,m^2\in\mathbb{Q}^+,\textstyle\frac{m}{\mu}\in\mathbb{R}^{+}\backslash\mathbb{Q},\\
&\nu^{4}_1\in\mathbb{Q}^+,\nu^{4}_2\in\mathbb{Q}^+ \text{ and } \nu^{2}_2/\nu^{2}_1\in\mathbb{Q}^+.
\end{align*}
For fixed positive numbers $\nu_1,\nu_2$, and fixed positive integers $j_1^*\neq \frac{\nu_2}{\nu_1}j^*_{2}$, we define the following set of parameters $(\mu,m)$ by
\begin{align*}
\textstyle\tilde{\mathcal{S}}'_{\mu,m}:=\bigg\{(\mu,m)\in\mathbb{R}^+\times\mathbb{R}^{+}:
\mu^2\in\mathbb{Q}^+,m^2\in\mathbb{Q}^+,\frac{m}{\mu}\in\mathbb{R}^{+}\backslash\mathbb{Q}, \text{ for } \nu_1\in\mathbb{R}^+,\nu_2\in\mathbb{R}^+, \\
\text{ with } \nu^{4}_1\in\mathbb{Q}^+,\nu^{4}_2\in\mathbb{Q}^+
\text{ and } \nu^{2}_2/\nu^{2}_1\in\mathbb{Q}^+\bigg\}&.
\end{align*}
Remark that $\tilde{\mathcal{S}}'_{\mu,m}$ and $\mathcal{S}'_{\mu,m}$ are two identical sets. Obviously,
\begin{align*}
\textstyle\left(\frac{\mu \nu^{4}_1(j^*_1)^4+m}{\mu \nu^{4}_2(j^*_{2})^4+m}\right)^{\frac12}
\in\mathbb{R}^+\backslash\mathbb{Q},\quad \forall(\mu,m)\in\tilde{\mathcal{S}}'_{\mu,m}.
\end{align*}

Let us now solve equation \eqref{f:fixfrequency} for $(\mu,m)\in\tilde{\mathcal{S}}'_{\mu,m}$.
\begin{lemm}\label{le:irrational11}
For fixed positive numbers $\nu_1,\nu_2$ satisfying $\nu^{4}_1\in\mathbb{Q}^+,\nu^{4}_2\in\mathbb{Q}^+
\text{ and } \nu^{2}_2/\nu^{2}_1\in\mathbb{Q}^+$, and fixed positive integers $j_1^*\neq \frac{\nu_2}{\nu_1}j^*_{2}$, if $(\mu,m)$ are in $\tilde{\mathcal{S}}'_{\mu,m}$, then equation \eqref{f:fixfrequency} has only  four solutions
\begin{align*}
 j=(\pm j^*_1,0),\quad(0,\pm j^*_2).
\end{align*}
\end{lemm}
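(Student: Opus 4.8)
The plan is to imitate the proof of Lemma~\ref{le:irrational}: first exclude the ``mixed'' modes with $j_1 j_2\neq0$ by a cross-term argument, and then solve the two one-dimensional equations that remain. The point where a new idea is needed is that for $(\mu,m)\in\tilde{\mathcal{S}}'_{\mu,m}$ the number $\mu$ is irrational — by the remark preceding the lemma at least two of $\mu\nu_1^4,\mu\nu_2^4,\mu\nu_1^2\nu_2^2$ are irrational while $\nu_1^4,\nu_2^4,\nu_1^2\nu_2^2\in\mathbb{Q}^+$ — so the part of the numerator in \eqref{E:irrational} multiplying $\mu$ is only a rational multiple of $\mu$, not a rational number, and the rational-fraction argument that settled the case $\nu_2^2/\nu_1^2\in\mathbb{Q}^+$ of Lemma~\ref{le:irrational} is unavailable. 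First I would record the arithmetic facts about $\tilde{\mathcal{S}}'_{\mu,m}$: since $m/\mu\in\mathbb{R}^+\setminus\mathbb{Q}$, the pair $\{\mu,m\}$ is linearly independent over $\mathbb{Q}$, and $\mu m\notin\mathbb{Q}$ (otherwise $m/\mu=\mu m/\mu^2\in\mathbb{Q}$) while $\mu^2,m^2,(\mu m)^2\in\mathbb{Q}$; moreover $m/(\mu\nu_k^4)=\nu_k^{-4}(m/\mu)\notin\mathbb{N}$ for $k=1,2$. Because $\nu_1^4,\nu_2^4,\nu_1^2\nu_2^2\in\mathbb{Q}$, the numbers $\omega_{j_k^*}^2=\mu\nu_k^4(j_k^*)^2+m(j_k^*)^{-2}$ and $\mu\lambda_j^2+m$ all lie in $\mathbb{Q}\mu+\mathbb{Q}m$.

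Next I would eliminate the cross term. Let $j=(j_1,j_2)$ solve \eqref{f:fixfrequency} with $j_1 j_2\neq0$, and set $a_k=\nu_k^4(j_k^*)^2$, $b_k=(j_k^*)^{-2}$. Expanding the square in \eqref{f:fixfrequency} and isolating the cross term gives $2j_1 j_2\,\omega_{j_1^*}\omega_{j_2^*}=\mu R+m S$ with $R=\lambda_j^2-a_1 j_1^2-a_2 j_2^2\in\mathbb{Q}$ and $S=1-b_1 j_1^2-b_2 j_2^2\in\mathbb{Q}$, which is just \eqref{E:irrational} after clearing $\omega_{j_2^*}^2$. Squaring a second time and using $\omega_{j_1^*}^2\omega_{j_2^*}^2=(\mu^2 a_1 a_2+m^2 b_1 b_2)+\mu m\,(a_1 b_2+a_2 b_1)$ together with the linear independence of $\{1,\mu m\}$ over $\mathbb{Q}$ splits the resulting identity into the two rational relations $2RS=4j_1^2 j_2^2(a_1 b_2+a_2 b_1)$ and $\mu^2 R^2+m^2 S^2=4j_1^2 j_2^2(\mu^2 a_1 a_2+m^2 b_1 b_2)$. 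Eliminating between them, with $t:=m^2/\mu^2\in\mathbb{Q}^+$ and $w^2:=(a_1^2-tb_1^2)(a_2^2-tb_2^2)$, one gets $R^2=2j_1^2 j_2^2(a_1 a_2+tb_1 b_2\mp w)$ and $S^2=\tfrac2t j_1^2 j_2^2(a_1 a_2+tb_1 b_2\pm w)$.

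To finish I would argue as follows. Because $j_1^*\neq\frac{\nu_2}{\nu_1}j_2^*$ one has $a_1 b_2\neq a_2 b_1$, and because $m/(\mu\nu_k^4)\notin\mathbb{N}$ one has $a_k^2\neq tb_k^2$; since $S^2$ is real we must have $(a_1^2-tb_1^2)(a_2^2-tb_2^2)>0$, and then $0<w^2<(a_1 a_2+tb_1 b_2)^2$ so that neither factor on the right above degenerates. As $R,S\in\mathbb{Q}$, the first relation forces $2(a_1 a_2+tb_1 b_2\mp w)$ to be the square of a rational (in particular $w\in\mathbb{Q}$), so $R=\ell\,j_1 j_2$ for a fixed $\ell\in\mathbb{Q}^+$; comparing with $R=\lambda_j^2-a_1 j_1^2-a_2 j_2^2$, whose leading part $\lambda_j^2=(\nu_1^2 j_1^2+\nu_2^2 j_2^2)^2$ has degree four while $\ell j_1 j_2$ has degree two, bounds $|j_1|,|j_2|$ a priori; the analogous identity for $S$ similarly gives a conic constraint $b_1 j_1^2+c\,j_1 j_2+b_2 j_2^2=1$, and one then checks directly that none of the finitely many surviving candidates with $j_1 j_2\neq0$ satisfies \eqref{f:fixfrequency}. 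I expect this closing of the finite residual case to be the main obstacle, and it is exactly here that $j_1^*\neq\frac{\nu_2}{\nu_1}j_2^*$ and the conditions $\mu^2,m^2\in\mathbb{Q}$, $m/\mu\notin\mathbb{Q}$ are put to full use. Once $j_1 j_2=0$ is established, say $j_1=0$, equation \eqref{f:fixfrequency} collapses to $(m-\mu\nu_2^4(j_2^*)^2 j_2^2)((j_2^*)^2-j_2^2)=0$, and $m/(\mu\nu_2^4)\notin\mathbb{N}$ forces $j_2=\pm j_2^*$; the case $j_2=0$ is symmetric. This leaves precisely the four solutions $j=(\pm j_1^*,0),(0,\pm j_2^*)$.
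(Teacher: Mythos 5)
Your opening is sound and coincides with the paper's: squaring \eqref{f:fixfrequency}, using $\omega_{j_k^*}^2=\mu\nu_k^4(j_k^*)^2+m(j_k^*)^{-2}$, and invoking $\mu^2,m^2\in\mathbb{Q}$ together with the irrationality of $m/\mu$ (equivalently of $\mu m$) to split the identity into two rational relations is exactly how the paper arrives at its two Diophantine equations \eqref{E:diophantine}--\eqref{E:diophantine1}; your relations $RS=2j_1^2j_2^2(a_1b_2+a_2b_1)$ and $\mu^2R^2+m^2S^2=4j_1^2j_2^2(\mu^2a_1a_2+m^2b_1b_2)$ are these equations in disguise, and your endgame for $j_1j_2=0$ (factoring the unsquared equation and using that $m/(\mu\nu_k^4)$ is irrational) is fine. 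The genuine gap is the exclusion of the mixed modes $j_1j_2\neq 0$, which is the heart of the lemma. Your elimination via $t=m^2/\mu^2$ and $w$ only produces necessary conditions $R^2=2j_1^2j_2^2(\pi\mp w)$, $S^2=\tfrac2t j_1^2j_2^2(\pi\pm w)$ bounding $|j_1|,|j_2|$ in terms of the (parameter-dependent) quantities $\ell,c$, and you then simply assert that ``one checks directly that none of the finitely many surviving candidates satisfies \eqref{f:fixfrequency}.'' That check is never carried out, the candidate set is not enumerated and depends on $\nu_k,j_k^*,t$, and no reason is given why an integer point with $j_1j_2\neq0$ cannot satisfy both derived relations; you yourself flag this as the main obstacle. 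As written, the proof does not establish the conclusion.

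For comparison, the paper closes this step without any residual case analysis, using only the first relation (the vanishing of the coefficient of $\mu m$). Expanded, that relation is the identity
\begin{align*}
\nu_1^4(j_2^*)^2\,j_1^2\bigl(j_1^2-(j_1^*)^2\bigr)^2+\nu_2^4(j_1^*)^2\,j_2^2\bigl(j_2^2-(j_2^*)^2\bigr)^2
+\Bigl[\bigl(\nu_1^4(j_1^*)^2+2\nu_1^2\nu_2^2(j_2^*)^2\bigr)j_1^2&\\
+\bigl(\nu_2^4(j_2^*)^2+2\nu_1^2\nu_2^2(j_1^*)^2\bigr)j_2^2
+\bigl(\nu_1^2(j_1^*)^2-\nu_2^2(j_2^*)^2\bigr)^2\Bigr]j_1^2j_2^2&=0,
\end{align*}
a sum of nonnegative terms; since the bracket is strictly positive, $j_1j_2=0$ follows at once, and the same equation then factors as $j_2^2(j_2^2-(j_2^*)^2)^2=0$ (resp.\ with indices exchanged), with \eqref{E:diophantine1} eliminating $(0,0)$. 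In particular this shows your ``finitely many surviving candidates'' set is empty, but that is precisely the assertion your argument leaves unproved; to complete your proposal you would need either this convexity/AM--GM rewriting or an explicit argument ruling out integer solutions of your two conic-type constraints with $j_1j_2\neq0$.
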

\begin{proof}
Because of the expressions of $\omega_{j^*_k},k=1,2$, squaring equation \eqref{f:fixfrequency} yields that
\begin{align*}
4j^2_1j^2_2\omega^2_{j^*_1}\omega^2_{j^*_2}=&\textstyle\left(\nu^{4}_1\left(j^4_1-(j^*_1)^2 j^2_1\right)+\nu^{4}_2\left(j^4_2-(j^*_2)^2 j^2_2\right)+2\nu^2_1\nu^2_2j^2_1j^2_2\right)^2\mu^2+m^2(1-\frac{j^2_1}{(j^*_1)^2}-\frac{j^2_2}{(j^*_2)^2})^2\\
&\textstyle+2\left(\nu^{4}_1\left(j^4_1-(j^*_1)^2 j^2_1\right)+\nu^{4}_2\left(j^4_2-(j^*_2)^2 j^2_2\right)+2\nu^2_1\nu^2_2j^2_1j^2_2\right)(1-\frac{j^2_1}{(j^*_1)^2}-\frac{j^2_2}{(j^*_2)^2})m\mu,
\end{align*}
which leads to
\begin{align*}
&\textstyle 4j^2_1j^2_2\nu^4_1\nu^4_2(j^*_1)^2(j^*_2)^2\mu^2+\frac{4m^2j^2_1j^2_2}{(j^*_1)^2(j^*_2)^2}+\frac{4j^2_1j^2_2( \nu^4_1(j^*_1)^4+\nu^4_2(j^*_2)^4)}{(j^*_1)^2(j^*_2)^2}m\mu\\
=&\textstyle\left(\nu^{4}_1\left(j^4_1-(j^*_1)^2 j^2_1\right)+\nu^{4}_2\left(j^4_2-(j^*_2)^2 j^2_2\right)+2\nu^2_1\nu^2_2j^2_1j^2_2\right)^2\mu^2+m^2(1-\frac{j^2_1}{(j^*_1)^2}-\frac{j^2_2}{(j^*_2)^2})^2\\
&\textstyle+2\left(\nu^{4}_1\left(j^4_1-(j^*_1)^2 j^2_1\right)+\nu^{4}_2\left(j^4_2-(j^*_2)^2 j^2_2\right)+2\nu^2_1\nu^2_2j^2_1j^2_2\right)(1-\frac{j^2_1}{(j^*_1)^2}-\frac{j^2_2}{(j^*_2)^2})m\mu.
\end{align*}
Therefore,
\begin{align*}
&\textstyle(\frac{4j^2_1j^2_2( \nu^4_1(j^*_1)^4+\nu^4_2(j^*_2)^4)}{(j^*_1)^2(j^*_2)^2}-2\left(\nu^{4}_1\left(j^4_1-(j^*_1)^2 j^2_1\right)+\nu^{4}_2\left(j^4_2-(j^*_2)^2 j^2_2\right)+2\nu^2_1\nu^2_2j^2_1j^2_2\right)(1-\frac{j^2_1}{(j^*_1)^2}-\frac{j^2_2}{(j^*_2)^2}))\mu^2\frac{m}{\mu}\\
&\textstyle+4j^2_1j^2_2\nu^4_1\nu^4_2(j^*_1)^2(j^*_2)^2\mu^2+\frac{4m^2j^2_1j^2_2}{(j^*_1)^2(j^*_2)^2}-\left(\nu^{4}_1\left(j^4_1-(j^*_1)^2 j^2_1\right)+\nu^{4}_2\left(j^4_2-(j^*_2)^2 j^2_2\right)+2\nu^2_1\nu^2_2j^2_1j^2_2\right)^2\mu^2\\
&\textstyle-m^2(1-\frac{j^2_1}{(j^*_1)^2}-\frac{j^2_2}{(j^*_2)^2})^2=0.
\end{align*}
For $\nu^{4}_1\in\mathbb{Q}^+,\nu^{4}_2\in\mathbb{Q}^+
\text{ and } \nu^{2}_2/\nu^{2}_1\in\mathbb{Q}^+$, it follows from $
\mu^2\in\mathbb{Q}^+,m^2\in\mathbb{Q}^+,\textstyle\frac{m}{\mu}\in\mathbb{R}^{+}\backslash\mathbb{Q}$ that
\begin{align*}
&\textstyle2\left(\nu^{4}_1\left(j^4_1-(j^*_1)^2 j^2_1\right)+\nu^{4}_2\left(j^4_2-(j^*_2)^2 j^2_2\right)+2\nu^2_1\nu^2_2j^2_1j^2_2\right)(1-\frac{j^2_1}{(j^*_1)^2}-\frac{j^2_2}{(j^*_2)^2})\\
&\textstyle-\frac{4j^2_1j^2_2( \nu^4_1(j^*_1)^4+\nu^4_2(j^*_2)^4)}{(j^*_1)^2(j^*_2)^2}=0,\\
&\textstyle4j^2_1j^2_2\nu^4_1\nu^4_2(j^*_1)^2(j^*_2)^2\mu^2+\frac{4m^2j^2_1j^2_2}{(j^*_1)^2(j^*_2)^2}-\left(\nu^{4}_1\left(j^4_1-(j^*_1)^2 j^2_1\right)+\nu^{4}_2\left(j^4_2-(j^*_2)^2 j^2_2\right)+2\nu^2_1\nu^2_2j^2_1j^2_2\right)^2\mu^2\nonumber\\
&\textstyle-m^2(1-\frac{j^2_1}{(j^*_1)^2}-\frac{j^2_2}{(j^*_2)^2})^2=0,
\end{align*}
that is,
\begin{align}
&\nu^4_1(j^*_2)^2j^6_1+\nu^4_2(j^*_1)^2j^6_2+(\nu^4_1(j^*_1)^2+2\nu_1^2\nu^2_2(j^*_2)^2)j^4_1j^2_2
+(\nu^4_2(j^*_2)^2+2\nu^2_1\nu^2_2(j^*_1)^2)j^2_1j^4_2\nonumber\\
&-2\nu^4_1(j^*_1)^2(j^*_2)^2j^4_1
-2\nu^4_2(j^*_1)^2(j^*_2)^2j^4_2+(\nu^2_1(j^*_1)^2-\nu^2_2(j^*_2)^2)^2j^2_1j^2_2\nonumber\\
&+\nu^4_1(j^*_1)^4(j^*_2)^2j^2_1+\nu^4_2(j^*_1)^2(j^*_2)^4j^2_2=0,\label{E:diophantine}\\
&\mu^2\nu^8_1(j^*_1)^2(j^*_2)^2j^8_1+\mu^2\nu^8_2(j^*_1)^2(j^*_2)^2j^8_2+6\mu^2\nu^4_1\nu^4_2(j^*_1)^2(j^*_2)^2j^4_1j^4_2
+2\mu^2\nu^6_1\nu^2_2(j^*_1)^2(j^*_2)^2j^6_1j^2_2\nonumber\\
&+2\mu^2\nu^2_1\nu^6_2(j^*_1)^2(j^*_2)^2j^2_1j^6_2-2\mu^2\nu^8_1(j^*_1)^4(j^*_2)^2j^6_1-2\mu^2\nu^8_2(j^*_1)^2(j^*_2)^4j^6_2\nonumber\\
&-2\mu^2(\nu^4_1\nu^4_2(j^*_1)^2(j^*_2)^4+\nu^6_1\nu^2_2(j^*_1)^4(j^*_2)^2)j^4_1j^2_2
-2\mu^2(\nu^4_1\nu^4_2(j^*_1)^4(j^*_2)^2+\nu^2_1\nu^6_2(j^*_1)^2(j^*_2)^4)j^2_1j^4_2\nonumber\\
&+\mu^2\nu^8_1(j^*_1)^6(j^*_2)^2j^4_1+\mu^2\nu^8_2(j^*_1)^2(j^*_2)^6j^4_2
-2\mu^2\nu^4_1\nu^4_2(j^*_1)^4(j^*_2)^4j^2_1j^2_2-4m^2j^2_1j^2_2\nonumber\\
&-m^2(j^*_2)^2j^2_1
-m^2(j^*_1)^2j^2_2+m^2(j^*_1)^2(j^*_2)^2=0,\label{E:diophantine1}
\end{align}
which are two Diophantine equations. Observe that
\begin{align*}
\nu^4_1(j^*_2)^2j^6_1+\nu^4_1(j^*_1)^4(j^*_2)^2j^2_1\geq2\nu^4_1(j^*_1)^2(j^*_2)^2j^4_1,\\
\nu^4_2(j^*_1)^2j^6_2+\nu^4_2(j^*_1)^2(j^*_2)^4j^2_2\geq2\nu^4_2(j^*_1)^2(j^*_2)^2j^4_2.
\end{align*}
Combining this with \eqref{E:diophantine} yields that
\begin{align*}
0\geq(\nu^4_1(j^*_1)^2+2\nu_1^2\nu^2_2(j^*_2)^2)j^4_1j^2_2
+(\nu^4_2(j^*_2)^2+2\nu^2_1\nu^2_2(j^*_1)^2)j^2_1j^4_2+(\nu^2_1(j^*_1)^2-\nu^2_2(j^*_2)^2)^2j^2_1j^2_2\geq0
\end{align*}
Therefore,
\begin{align*}
&(\nu^4_1(j^*_1)^2+2\nu_1^2\nu^2_2(j^*_2)^2)j^4_1j^2_2
+(\nu^4_2(j^*_2)^2+2\nu^2_1\nu^2_2(j^*_1)^2)j^2_1j^4_2+(\nu^2_1(j^*_1)^2-\nu^2_2(j^*_2)^2)^2j^2_1j^2_2\\
&=((\nu^4_1(j^*_1)^2+2\nu_1^2\nu^2_2(j^*_2)^2)j^2_1
+(\nu^4_2(j^*_2)^2+2\nu^2_1\nu^2_2(j^*_1)^2)j^2_2+(\nu^2_1(j^*_1)^2-\nu^2_2(j^*_2)^2)^2)j^2_1j^2_2\\
&=0.
\end{align*}
We obtain either $j_1=0$, or $j_2=0$. For $j_1=0$, from \eqref{E:diophantine}, we have
\begin{align*}
\nu^4_2(j^*_1)^2j^2_2(j_2+j^*_2)^2(j_2-j^*_2)^2=0,
\end{align*}
which leads to either $j_2=0$, or $ j_2=\pm j^*_2$. On the other hand, if $j_2=0$, we conclude either $j_1=0$, or $ j_1=\pm j^*_1$. As a consequence, equation \eqref{E:diophantine} admits five solutions
\begin{align*}
(0,0),\quad (\pm j^*_1,0),\quad (0,\pm j^*_2),.
\end{align*}
By substituting these solutions into equation  \eqref{E:diophantine1}, equations  \eqref{E:diophantine}--\eqref{E:diophantine1} has solutions $ (\pm j^*_1,0),(0,\pm j^*_2)$,
meaning that equation \eqref{f:fixfrequency} has only  four solutions $(\pm j^*_1,0),(0,\pm j^*_2)$.

The proof of the lemma is now completed.
\end{proof}

We also verify the density of the set $\tilde{\mathcal{S}}'_{\mu,m}$ in the  space $\mathbb{R}^{+}\times\mathbb{R}^{+}$.
\begin{lemm}\label{le:density2}
For fixed positive numbers $\nu_1,\nu_2$ satisfying $\nu^{4}_1\in\mathbb{Q}^+,\nu^{4}_2\in\mathbb{Q}^+$ and $\nu^{2}_2/\nu^{2}_1\in\mathbb{Q}^+$, and fixed positive integers $j_1^*\neq \frac{\nu_2}{\nu_1}j^*_{2}$, the set $\tilde{\mathcal{S}}'_{\mu,m}$ is dense in the  space $\mathbb{R}^+\times\mathbb{R}^+$.
\end{lemm}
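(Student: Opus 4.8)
The plan is to mimic the density argument of Lemma \ref{le:density1}, but now the perturbation must be carried out inside the constraint $\mu^2\in\mathbb{Q}^+$, $m^2\in\mathbb{Q}^+$ while keeping $m/\mu\in\mathbb{R}^+\backslash\mathbb{Q}$; the generators $\nu_1,\nu_2$ are fixed with $\nu_1^4,\nu_2^4\in\mathbb{Q}^+$ and $\nu_2^2/\nu_1^2\in\mathbb{Q}^+$, so they impose no further restriction on $(\mu,m)$. Fix $(\mu_0,m_0)\in\mathbb{R}^+\times\mathbb{R}^+$ and $\epsilon>0$; I must produce $(\tilde\mu,\tilde m)\in\tilde{\mathcal{S}}'_{\mu,m}\cap\mathcal{B}_\epsilon(\mu_0,m_0)$, with $\mathcal{B}_\epsilon$ as defined in the proof of Lemma \ref{le:density1}.

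First I would recall the auxiliary set
\begin{align*}
\mathcal{Q}:=\left\{q\in\mathbb{Q}^{+}:(q)^{\frac12}\in\mathbb{R}^{+}\backslash\mathbb{Q}\right\},
\end{align*}
which is dense in $\mathbb{R}^+$ (ratios of two distinct primes, as used in Lemma \ref{le:density1}). Then I choose $\tilde\mu\in(\mu_0-\epsilon,\mu_0+\epsilon)$ with $\tilde\mu^2\in\mathcal{Q}$: such $\tilde\mu$ exists because $\{x>0:x^2\in\mathcal{Q}\}$ is exactly $\{\sqrt{q}:q\in\mathcal{Q}\}$, which is dense in $\mathbb{R}^+$ since $\mathcal{Q}$ is dense and $x\mapsto x^2$ is a homeomorphism of $\mathbb{R}^+$; in particular $\tilde\mu^2\in\mathbb{Q}^+$ and $\tilde\mu\in\mathbb{R}^+\backslash\mathbb{Q}$. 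Next, with $\tilde\mu$ fixed, I choose $\tilde m\in(m_0-\epsilon,m_0+\epsilon)$ with $\tilde m^2\in\mathbb{Q}^+$ and $\tilde m/\tilde\mu\in\mathbb{R}^+\backslash\mathbb{Q}$. To arrange the latter simultaneously with $\tilde m^2\in\mathbb{Q}^+$, note that $(\tilde m/\tilde\mu)^2=\tilde m^2/\tilde\mu^2$ is automatically rational once $\tilde m^2\in\mathbb{Q}^+$, so $\tilde m/\tilde\mu$ is irrational unless $\tilde m^2/\tilde\mu^2$ happens to be the square of a rational; hence it suffices to pick $\tilde m$ with $\tilde m^2\in\mathbb{Q}^+$ and $\tilde m^2/\tilde\mu^2\in\mathcal{Q}$ — equivalently $\tilde m\in\{\tilde\mu\sqrt{q}:q\in\mathcal{Q}\}$, a set dense in $\mathbb{R}^+$ by the same reasoning as above, so it meets $(m_0-\epsilon,m_0+\epsilon)$. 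This gives $\tilde m^2\in\mathbb{Q}^+$ and $\tilde m/\tilde\mu\in\mathbb{R}^+\backslash\mathbb{Q}$, so $(\tilde\mu,\tilde m)\in\tilde{\mathcal{S}}'_{\mu,m}$, and since $|\tilde\mu-\mu_0|<\epsilon$, $|\tilde m-m_0|<\epsilon$ we get $(\tilde\mu,\tilde m)\in\mathcal{B}_\epsilon(\mu_0,m_0)$.

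The only slightly delicate point is the simultaneous control of $m^2\in\mathbb{Q}^+$ and $m/\mu\notin\mathbb{Q}$: one must avoid accidentally landing on a rational ratio, which is why I phrase the choice of $\tilde m$ through the set $\{\tilde\mu\sqrt q:q\in\mathcal Q\}$ rather than merely through $\{m:m^2\in\mathbb{Q}^+\}$ — the latter is dense but would not by itself guarantee irrationality of $m/\mu$. Once this is set up, density of $\mathcal{Q}$ and the elementary fact that dilations and the squaring map preserve density in $\mathbb{R}^+$ close the argument, and no intermediate value theorem is needed here (unlike Lemma \ref{le:density1}), because the constraints on $(\mu,m)$ decouple once $\nu_1,\nu_2$ are fixed rational-fourth-powers with rational ratio of squares. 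Hence $\tilde{\mathcal{S}}'_{\mu,m}$ is dense in $\mathbb{R}^+\times\mathbb{R}^+$, which completes the proof.
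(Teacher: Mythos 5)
Your argument is correct. Note that the paper does not actually print a proof of Lemma \ref{le:density2}: the lemma is stated right after Lemma \ref{le:density1} with the proof left implicit, the suggestion being to adapt the argument of Lemma \ref{le:density1}. Your construction is such an adaptation, and it is in fact simpler than a literal transcription: you reuse the same auxiliary dense set $\mathcal{Q}=\{q\in\mathbb{Q}^+:\sqrt{q}\in\mathbb{R}^+\backslash\mathbb{Q}\}$, but because the membership conditions of $\tilde{\mathcal{S}}'_{\mu,m}$ (namely $\mu^2\in\mathbb{Q}^+$, $m^2\in\mathbb{Q}^+$, $m/\mu\in\mathbb{R}^+\backslash\mathbb{Q}$) do not couple $(\mu,m)$ through $j_1^*,j_2^*$ once $\nu_1,\nu_2$ are fixed, you can pick $\tilde\mu$ with $\tilde\mu^2\in\mathcal{Q}$ and then $\tilde m=\tilde\mu\sqrt{q}$ with $q\in\mathcal{Q}$, using only that the square-root homeomorphism and dilations of $\mathbb{R}^+$ preserve density; no intermediate value theorem and no non-constant auxiliary function $\mathrm{g}$ are needed, and indeed the hypothesis $j_1^*\neq\frac{\nu_2}{\nu_1}j_2^*$ plays no role in the density claim (it matters elsewhere, e.g.\ in Lemma \ref{le:irrational11} and in the nondegeneracy of the bifurcation matrix). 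The one delicate point, securing $m/\mu\notin\mathbb{Q}$ simultaneously with $m^2\in\mathbb{Q}^+$, is handled correctly by parametrizing $\tilde m$ through $\{\tilde\mu\sqrt{q}:q\in\mathcal{Q}\}$, which gives $\tilde m^2=\tilde\mu^2 q\in\mathbb{Q}^+$ and $\tilde m/\tilde\mu=\sqrt{q}$ irrational, exactly as required.
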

In next analysis, we will consider the critical cases in which $(\omega,\alpha,\gamma):=(\omega_{j^*},0,0)$. We  further set
\begin{align*}
&J=\left\{j\in\mathbb{Z}^2:j\neq(\pm j^*_1,0),(0,\pm j^*_2)\right\},\\
&J^\bot=\left\{j\in\mathbb{Z}^2:j=(\pm j^*_1,0),(0,\pm j^*_2)\right\}.
\end{align*}
Denote by $V,W$ the kernel space of the operator $L_{\omega_{j^*},0,0}$ and its orthogonal complement in $H^0$, respectively. Then the corresponding projection operators are defined as
\begin{align*}
\Pi_{V}:H^s\longrightarrow V,\quad\Pi_{W}:H^s\longrightarrow W.
\end{align*}

First, the following fact follows from Lemma \ref{le:irrational}.
\begin{lemm}\label{kernel}
The space $V$ is 4-dimensional with 
\begin{align*}
\textstyle V=\left\{v=\sum_{j\in J^\bot}v_je^{\mathrm{i}j\cdot \theta}\in H^0\right\}.
\end{align*}
\end{lemm}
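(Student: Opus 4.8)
The plan is to compute the kernel of the operator $L_{\omega_{j^*},0,0}$ explicitly in the Fourier basis and read off its dimension. First I would recall that for $\varphi=\sum_{j\in\mathbb{Z}^2}\varphi_je^{\mathrm{i}j\cdot\theta}\in H^{s+5}$, the action of $L_{\omega_{j^*},0,0}$ is diagonal: since $\alpha=\gamma=0$, one has
\begin{align*}
L_{\omega_{j^*},0,0}\varphi=\sum_{j\in\mathbb{Z}^2}\left(-(\omega_{j^*_1}j_1+\omega_{j^*_2}j_2)^2+\mu(\nu^2_1j^2_1+\nu^2_2j^2_2)^2+m\right)\varphi_je^{\mathrm{i}j\cdot\theta}.
\end{align*}
Hence $\varphi\in V=\ker L_{\omega_{j^*},0,0}$ if and only if $\varphi_j=0$ whenever the symbol $-(\omega_{j^*_1}j_1+\omega_{j^*_2}j_2)^2+\mu(\nu^2_1j^2_1+\nu^2_2j^2_2)^2+m$ is nonzero, i.e. whenever $j$ fails to solve equation \eqref{f:fixfrequency}.

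Next I would invoke Lemma \ref{le:irrational} (for $(\mu,m)\in\tilde{\mathcal{S}}_{\mu,m}$; the case $(\mu,m)\in\tilde{\mathcal{S}}'_{\mu,m}$ is covered identically by Lemma \ref{le:irrational11}), which asserts that the only integer solutions of \eqref{f:fixfrequency} are exactly the four points $j=(\pm j^*_1,0),(0,\pm j^*_2)$, that is, $j\in J^\bot$. Therefore $\varphi\in V$ forces $\varphi_j=0$ for all $j\in J=\mathbb{Z}^2\setminus J^\bot$, and conversely any $\varphi$ supported on $J^\bot$ lies in the kernel; this gives $V=\{v=\sum_{j\in J^\bot}v_je^{\mathrm{i}j\cdot\theta}\in H^0\}$. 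Finally, to count the dimension I would recall the reality constraint $\overline{\varphi_j}=\varphi_{-j}$ built into the definition of $H^s$: the four indices in $J^\bot$ split into two conjugate pairs $\{(j^*_1,0),(-j^*_1,0)\}$ and $\{(0,j^*_2),(0,-j^*_2)\}$, each contributing one free complex parameter, hence two real dimensions; altogether $\dim_{\mathbb{R}}V=4$.

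The argument is essentially bookkeeping once Lemma \ref{le:irrational} is in hand, so there is no serious obstacle; the only point requiring a little care is the dimension count, where one must be explicit that "$4$-dimensional" is meant over $\mathbb{R}$ and that the two complex coefficients $v_{(j^*_1,0)}$ and $v_{(0,j^*_2)}$ are the genuine free parameters, the remaining two being their conjugates. I would also note in passing that $V\subset C^\infty(\mathbb{T}^2;\mathbb{R})$ since it is finite-dimensional and spanned by trigonometric polynomials, which is what makes the later Lyapunov--Schmidt splitting $H^s=V\oplus(W\cap H^s)$ well behaved.
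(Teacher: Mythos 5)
Your argument is correct and is exactly the route the paper takes: the paper simply asserts that the lemma "follows from Lemma \ref{le:irrational}" (resp.\ Lemma \ref{le:irrational11}), i.e.\ the kernel of the diagonal operator $L_{\omega_{j^*},0,0}$ is supported on the four Fourier modes $j\in J^\bot$, and you have merely written out this bookkeeping, including the correct real-dimension count of $4$ coming from the reality constraint $\overline{\varphi_j}=\varphi_{-j}$.
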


Therefore the corresponding space $W$ can be written as
\begin{align*}
\textstyle W=\left\{ w=\sum_{j\in J}w_{j}e^{\mathrm{i}j\cdot\theta}\in H^{0}\right\}.
\end{align*}
Obviously, the space $H^s$ is decomposed as the direct sum of $V\cap H^s$ and $W\cap H^s$. For every $\varphi\in H^s$, we can write $\varphi=v+w$, where $v\in V\cap H^s$ and $w\in W\cap H^s$. By implementing  the Lyapunov--Schmidt reduction with respect to the above decomposition, equation \eqref{f:eq} is equivalent to the range equation
\begin{align}
L_{\omega,\alpha,\gamma}w=\Pi_{W}F(\omega,v+w)\label{E:range}
\end{align}
and the bifurcation equation
\begin{align}
L_{\omega,\alpha,\gamma}v=\Pi_{V}F(\omega,v+w). \label{E:bifurcation}
\end{align}

In the space $V$, one has that for $\phi=(\phi_1,\phi_2)\in\mathbb{R}^2$,
\begin{align*}
v(\theta)=&2\Re(v_{j^*_1,0})\cos( j^*_1\theta_1)+2\Im(v_{j^*_1,0})\sin( j^*_1\theta_1)+2\Re(v_{0,j^*_2})\cos( j^*_2\theta_2)+2\Im(v_{0,j^*_2})\sin( j^*_2\theta_2)\\
=&2\sqrt{(\Re(v_{j^*_1,0}))^2+(\Im(v_{j^*_1,0}))^2}\cos( j^*_1\theta_1+\phi_1)+2\sqrt{(\Re(v_{0,j^*_2}))^2+(\Im(v_{0,j^*_2}))^2}\cos( j^*_2\theta_2+\phi_2).
\end{align*}
Since $\varphi(\theta)$ satisfies \eqref{f:eq}, so does $\tilde{\varphi}(\theta):=\varphi(\theta_1+\phi_1,\theta_2+\phi_2)$. Based on this, we can take $\phi=(\phi_1,\phi_2)=0$. As a consequence,
\begin{align*}
v(\tilde{\rho})(\theta)={\tilde{\rho}}_1\cos( j^*_1\theta_1)+{\tilde{\rho}}_2\cos( j^*_2\theta_2),
\end{align*}
where
\begin{align*}
{\tilde{\rho}}_1=2\sqrt{(\Re(v_{j^*_1,0}))^2+(\Im(v_{j^*_1,0}))^2},\quad{\tilde{\rho}}_2=2\sqrt{(\Re(v_{0,j^*_2}))^2+(\Im(v_{0,j^*_2}))^2}.
\end{align*}
This is equivalent to
\begin{align}\label{f:bir-solution}
v(\rho)(\theta)=\rho_1(e^{\mathrm{i} j^*_1\theta_1}+e^{-\mathrm{i} j^*_1\theta_1})+\rho_2(e^{\mathrm{i} j^*_2\theta_2}+e^{-\mathrm{i} j^*_2\theta_2})
\end{align}
for some scalar $\rho=(\rho_1,\rho_2)$, where
\begin{align*}
\rho_1=\sqrt{(\Re(v_{j^*_1,0}))^2+(\Im(v_{j^*_1,0}))^2},\quad\rho_2=\sqrt{(\Re(v_{0,j^*_2}))^2+(\Im(v_{0,j^*_2}))^2}.
\end{align*}
If we plug expression \eqref{f:bir-solution} back into \eqref{E:range}--\eqref{E:bifurcation}, then
\begin{align}
&L_{\omega,\alpha,\gamma}w=\Pi_{W}F(\omega,v(\rho)+w),\label{d3}\\
&L_{\omega,\alpha,\gamma}v(\rho)=\Pi_{V}F(\omega,v(\rho)+w).\label{d4}
\end{align}
For $(\rho,\omega,\alpha,\gamma)\approx(0,\omega_{j^*},0,0)$, our task now is to solve the range equation \eqref{d3} and the bifurcation equation \eqref{d4}, respectively.

\section{Solutions of the range equation}\label{sec:3}
The object of this section is to look for  solutions to the range equation \eqref{d3} in the space $W\cap H^s$. The proof is based on the implicit function theorem.

For fixed $K\geq1$ large enough, we denote
\begin{align*}
J_1:=\left\{j\in J:|j|^2\geq K\right\}.
\end{align*}
Remark that $K$ is taken in the proof of Lemma \ref{le:inverse}. It is straightforward that $J=J_1\oplus J_2$ with $J_2=J\backslash J_1$. Then we further decompose $W=Y\oplus Z $,
where
\begin{align*}
\textstyle Y:= \left\{y=\sum_{j\in J_1}y_{j}e^{\mathrm{i}j\cdot \theta}\in H^0\right\},\quad Z:= \left\{z=\sum_{j\in J_2}z_{j}e^{\mathrm{i}j\cdot \theta}\in H^0\right\}.
\end{align*}
Corresponding to the above decomposition, we split up \eqref{d3} into
\begin{align}
&L_{\omega,\alpha,\gamma}y-\Pi_{Y}F(\omega,v(\rho)+y+z)=0,\label{f:range}\\
&L_{\omega,\alpha,\gamma}z-\Pi_{Z}F(\omega,v(\rho)+y+z)=0.\label{f:range2}
\end{align}
 It is clear that
\begin{align*}
(L_{\omega,\alpha,\gamma}y)(\theta)=\sum\limits_{j\in J_1}{\Theta(j,\omega,\alpha,\gamma)}{y_{j}}e^{\mathrm{i}j\cdot\theta}, \quad \forall y\in Y\cap H^{s+5},
\end{align*}
where
\begin{align}\label{f:eta}
\Theta(j,\omega,\alpha,\gamma):=&-(\omega_1j_1+\omega_2j_2)^2+\mu(\nu^2_1j^{2}_1+\nu^2_2j^{2}_2)^2+m
+\mathrm{i}\alpha(\omega_1j_1+\omega_2j_2)\nonumber\\
&+\mathrm{i}\gamma(\omega_1j_1+\omega_2j_2)(\nu^2_1j^{2}_1+\nu^2_2j^{2}_2)^2.
\end{align}
Moreover, denote by $\mathcal{B}_\varrho(\omega_{j^*})$ a neighborhood  of $\omega_{j^*}$ in $\mathbb{R}^+\times\mathbb{R}^+$, where
\begin{align}\label{f:setB}
\mathcal{B}_\varrho(\omega_{j^*}):=\left\{\omega=(\omega_1,\omega_2)\in\mathbb{R}^+\times\mathbb{R}^+:|\omega_k-\omega_{j^*_k}|<\varrho,k=1,2\right\}.
\end{align}
We first check the invertibility of the operator $L_{\omega,\alpha,\gamma}$ restricted to $Y\cap H^{s+5}$.
\begin{lemm}\label{le:inverse}
Let $s>0$. Then for all $\omega\in\mathcal{B}_\varrho(\omega_{j^*})$ and $(\alpha,\gamma)\in\mathbb{R}^2$, the linear operator $L_{\omega,\alpha,\gamma}:Y\cap H^{s+5}\longrightarrow Y\cap H^{s}$ is invertible with
\begin{align*}
L^{-1}_{\omega,\alpha,\gamma}:Y\cap H^{s}\longrightarrow Y\cap H^{s+2}.
\end{align*}
In addition, for all $j\in J_1$, $\omega\in\mathcal{B}_\varrho(\omega_{j^*})$ and $(\alpha,\gamma)\in\mathbb{R}^2$, there exists some constant $K\geq1$ large enough such that
\begin{align}\label{e3}
|\Theta(j,\omega,\alpha,\gamma)|\geq  K.
\end{align}
\end{lemm}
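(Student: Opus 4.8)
The statement has two parts: the lower bound \eqref{e3} on $|\Theta(j,\omega,\alpha,\gamma)|$ for $j\in J_1$, and the invertibility of $L_{\omega,\alpha,\gamma}$ on $Y\cap H^{s+5}$ with a gain of two derivatives for the inverse. The strategy is to prove the bound \eqref{e3} first, since the invertibility and the mapping property $L^{-1}_{\omega,\alpha,\gamma}\colon Y\cap H^s\to Y\cap H^{s+2}$ will follow formally from it by dividing Fourier coefficients by $\Theta(j,\omega,\alpha,\gamma)$.

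\textbf{Step 1: the real part dominates for large $|j|$.} I would start from the explicit formula \eqref{f:eta} and look only at the real part,
\begin{align*}
\Re\,\Theta(j,\omega,\alpha,\gamma)=-(\omega_1 j_1+\omega_2 j_2)^2+\mu(\nu_1^2 j_1^2+\nu_2^2 j_2^2)^2+m.
\end{align*}
The key point is that the biharmonic term $\mu(\nu_1^2 j_1^2+\nu_2^2 j_2^2)^2$ grows like $|j|^4$ while the transport term $(\omega_1 j_1+\omega_2 j_2)^2$ grows only like $|j|^2$: more precisely $(\omega_1 j_1+\omega_2 j_2)^2\le 2(\omega_1^2+\omega_2^2)|j|^2$ and $\mu(\nu_1^2 j_1^2+\nu_2^2 j_2^2)^2\ge \mu\min(\nu_1,\nu_2)^4|j|^4$. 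For $\omega\in\mathcal{B}_\varrho(\omega_{j^*})$ the prefactor $\omega_1^2+\omega_2^2$ is bounded uniformly (say by some $C_0$ depending only on $\omega_{j^*}$ and $\varrho$), so for $|j|^2$ large enough the quartic term beats the quadratic one and $\Re\,\Theta(j,\omega,\alpha,\gamma)\ge \frac12\mu\min(\nu_1,\nu_2)^4|j|^4\ge \frac12\mu\min(\nu_1,\nu_2)^4\,|j|^2$. Hence $|\Theta(j,\omega,\alpha,\gamma)|\ge\Re\,\Theta(j,\omega,\alpha,\gamma)\to\infty$ as $|j|\to\infty$, uniformly in $\omega\in\mathcal{B}_\varrho(\omega_{j^*})$ and in $(\alpha,\gamma)\in\mathbb R^2$ — the damping terms only add to the imaginary part and can be ignored for the lower bound. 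Now choose $K\ge1$ so large that $\frac12\mu\min(\nu_1,\nu_2)^4|j|^2\ge K$ whenever $|j|^2\ge K$; with $J_1=\{j\in J:|j|^2\ge K\}$ defined by this $K$, the bound \eqref{e3} holds for all $j\in J_1$. This fixes the constant $K$ referenced in the definition of $J_1$.

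\textbf{Step 2: invertibility and the derivative gain.} Since $L_{\omega,\alpha,\gamma}$ acts diagonally in the Fourier basis on $Y$, with $(L_{\omega,\alpha,\gamma}y)_j=\Theta(j,\omega,\alpha,\gamma)y_j$ for $j\in J_1$, and $|\Theta(j,\omega,\alpha,\gamma)|\ge K\ge1>0$ for every $j\in J_1$, the operator is injective; surjectivity follows by defining $(L^{-1}_{\omega,\alpha,\gamma}g)_j:=g_j/\Theta(j,\omega,\alpha,\gamma)$ for $g\in Y\cap H^s$. To get the claimed two–derivative smoothing I would bound $|\Theta(j,\omega,\alpha,\gamma)|\ge \frac12\mu\min(\nu_1,\nu_2)^4|j|^2$ (same estimate as in Step 1, now keeping the $|j|^2$ factor rather than absorbing it into $K$), so that $|(L^{-1}_{\omega,\alpha,\gamma}g)_j|\le C|j|^{-2}|g_j|$ uniformly in $\omega\in\mathcal B_\varrho(\omega_{j^*})$ and $(\alpha,\gamma)\in\mathbb R^2$. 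Then
\begin{align*}
\|L^{-1}_{\omega,\alpha,\gamma}g\|_{s+2}^2=\sum_{j\in J_1}(1+|j|^{2(s+2)})\,|\Theta(j,\omega,\alpha,\gamma)|^{-2}|g_j|^2\le C^2\sum_{j\in J_1}(1+|j|^{2(s+2)})|j|^{-4}|g_j|^2\le C'\|g\|_s^2,
\end{align*}
using $(1+|j|^{2(s+2)})|j|^{-4}\le C''(1+|j|^{2s})$ for $|j|^2\ge K\ge1$. This gives the mapping property $L^{-1}_{\omega,\alpha,\gamma}\colon Y\cap H^s\to Y\cap H^{s+2}$.

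\textbf{Main obstacle.} There is no small–divisor difficulty here — that is precisely the point of working with the biharmonic operator, as the paper emphasizes. The only genuinely delicate point is bookkeeping: making sure the threshold $K$ can be chosen uniformly over the whole neighborhood $\mathcal{B}_\varrho(\omega_{j^*})$ and over all $(\alpha,\gamma)\in\mathbb R^2$, which reduces to the elementary fact that $\omega_1^2+\omega_2^2$ is bounded on $\mathcal B_\varrho(\omega_{j^*})$ and that $\alpha,\gamma$ enter only the imaginary part of $\Theta$. I would also want to double-check that $j=0\notin J$ (true, since $J\subset\mathbb Z^2\setminus J^\bot$ but more to the point $(0,0)$ is never a solution of \eqref{c4}) so that $\min(\nu_1,\nu_2)^4|j|^4>0$ is never vacuous on $J_1$, though in fact for $J_1$ this is automatic since $|j|^2\ge K\ge1$.
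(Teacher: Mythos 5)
Your proposal is correct and follows essentially the same route as the paper: you bound $|\Theta|$ below by its real part (so $\alpha,\gamma$, which only enter the imaginary part, drop out uniformly), use that the biharmonic symbol $\mu(\nu_1^2j_1^2+\nu_2^2j_2^2)^2\geq\mu\min(\nu_1,\nu_2)^4|j|^4$ dominates the transport term $(\omega_1j_1+\omega_2j_2)^2\leq C|j|^2$ uniformly on $\mathcal B_\varrho(\omega_{j^*})$, and then invert diagonally, with the resulting $|\Theta|\geq c|j|^2$ bound giving the two-derivative gain, just as in the paper. The only slip is in your choice of $K$: the prescription ``choose $K$ so that $\tfrac12\mu\min(\nu_1,\nu_2)^4|j|^2\ge K$ whenever $|j|^2\ge K$'' is unattainable when $\tfrac12\mu\min(\nu_1,\nu_2)^4<1$, so keep the $|j|^4$ factor instead --- once $K$ is large enough that the quartic dominates and $K\ge 2/(\mu\min(\nu_1,\nu_2)^4)$, every $|j|^2\ge K$ gives $\tfrac12\mu\min(\nu_1,\nu_2)^4|j|^4\ge\tfrac12\mu\min(\nu_1,\nu_2)^4K|j|^2\ge|j|^2\ge K$, which is precisely the role of the paper's explicit choice $K\ge(2C^2+1)/(\mu(\min\{\nu_1,\nu_2\})^4)$ leading to $|\Theta|\ge j_1^2+j_2^2\ge K$ in \eqref{f:inverse1}.
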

\begin{proof}
Obviously, $L_{\omega,\alpha,\gamma}$ is a linear operator from $Y\cap H^{s+5}$ to $Y\cap H^{s}$. Suppose that the operator $L_{\omega,\alpha,\gamma}$ could be invertible. Then for $y\in Y\cap H^s$, its inverse operator is
\begin{align*}
L^{-1}_{\omega,\alpha,\gamma}y=\sum_{j\in{J}_1}\frac{1}{\Theta(j,\omega,\alpha,\gamma)}y_je^{\mathrm{i}j\cdot \theta}.
\end{align*}
Let us check formula \eqref{e3}. For all $\omega\in\mathcal{B}_\varrho(\omega_{j^*})$, a simple computation yields that $|\omega|\leq C$ for some positive constant $C=C(\omega_{j^*},\varrho)$. If we take $|j|^2\geq K\geq \frac{2C^2+1}{\mu(\min\{\nu_1,\nu_2\})^4}$, then for all $j\in J_1$, $\omega\in\mathcal{B}_\varrho(\omega_{j^*})$ and $(\alpha,\gamma)\in\mathbb{R}^2$,
\begin{align}
|\Theta(j,\omega,\alpha,\gamma)|\geq&|-(\omega_1j_1+\omega_2j_2)^2+\mu(\nu^2_1j^{2}_1+\nu^2_2j^{2}_2)^2+m|\nonumber\\
\geq &\mu(\nu^2_1j^{2}_1+\nu^2_2j^{2}_2)^2+m-(\omega_1j_1+\omega_2j_2)^2\nonumber\\
\geq& \mu(\min\{\nu_1,\nu_2\})^4(j^2_1+j^2_2)^2-C^2(j^2_1+j^2_2+2|j_1j_2|)\nonumber\\
\geq&(j^2_1+j^2_2)(\mu(\min\{\nu_1,\nu_2\})^4(j^2_1+j^2_2)-2C^2)\geq j^2_1+j^2_2\label{f:inverse1}\\
\geq& K.\nonumber
\end{align}
Hence formula \eqref{e3} holds. As a result, $L_{\omega,\alpha,\gamma}$ is invertible.

It remains to prove that for all $y\in Y\cap H^s$,
\begin{align*}
I:=\sum\limits_{j\in J_1}\frac{1+(j^2_1+j^2_2)^{s+2}}{|\Theta(j,\omega,\alpha,\gamma)|^2}|y_{j}|^2<\infty.
\end{align*}
It follows from the definition of $J_1$ and \eqref{f:inverse1} that
\begin{align*}
I\leq\sum\limits_{j\in J_1}\frac{1+(j^2_1+j^2_2)^{s+2}}{(j^2_1+j^2_2)^2}|y_{j}|^2\leq\sum\limits_{j\in J_1}(\frac{1}{K^2}+(j^2_1+j^2_2)^{s})|y_{j}|^2\leq\|y\|^2_s<\infty.
\end{align*}
Thus we arrive at the conclusion of the lemma.
\end{proof}
 Denote by
$\mathcal{L}(H^{s_1};H^{s_2})$ the space of continuous linear operators from $H^{s_1}$ to $H^{s_2}$. In particular, we write $\mathcal{L}(H^{s};H^{s})$ as $\mathcal{L}(H^{s})$. The following lemma addresses how the operator $L^{-1}_{\omega,\alpha,\gamma}$ varies with respect to parameters $\omega,\alpha,\gamma$.
\begin{lemm}\label{le:continuous}
Let $s>0$.  The mapping $\mathcal{B}_\varrho(\omega_{j^*})\times\mathbb{R}^2\ni(\omega,\alpha,\gamma)\longmapsto L^{-1}_{\omega,\alpha,\gamma}\in \mathcal{L}(Y\cap H^s)$ is continuous with respect to the uniform operator topology.
\end{lemm}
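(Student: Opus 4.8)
The plan is to exploit the fact that $L^{-1}_{\omega,\alpha,\gamma}$ acts diagonally in the Fourier basis, with symbol $1/\Theta(j,\omega,\alpha,\gamma)$, so that its operator norm on $Y\cap H^s$ is controlled by the supremum over $j\in J_1$ of the symbol, and similarly the difference of two such inverse operators at nearby parameter values is controlled by a supremum of differences of symbols. Concretely, for $(\omega,\alpha,\gamma)$ and $(\omega',\alpha',\gamma')$ in $\mathcal{B}_\varrho(\omega_{j^*})\times\mathbb{R}^2$ one computes, for $y=\sum_{j\in J_1}y_je^{\mathrm{i}j\cdot\theta}\in Y\cap H^s$,
\begin{align*}
\big\|(L^{-1}_{\omega,\alpha,\gamma}-L^{-1}_{\omega',\alpha',\gamma'})y\big\|_s^2
=\sum_{j\in J_1}\big(1+|j|^{2s}\big)\left|\frac{1}{\Theta(j,\omega,\alpha,\gamma)}-\frac{1}{\Theta(j,\omega',\alpha',\gamma')}\right|^2|y_j|^2
\leq \Big(\sup_{j\in J_1}\Lambda_j\Big)^2\,\|y\|_s^2,
\end{align*}
where
\begin{align*}
\Lambda_j:=\left|\frac{\Theta(j,\omega',\alpha',\gamma')-\Theta(j,\omega,\alpha,\gamma)}{\Theta(j,\omega,\alpha,\gamma)\,\Theta(j,\omega',\alpha',\gamma')}\right|
=\frac{|\Theta(j,\omega',\alpha',\gamma')-\Theta(j,\omega,\alpha,\gamma)|}{|\Theta(j,\omega,\alpha,\gamma)|\,|\Theta(j,\omega',\alpha',\gamma')|}.
\end{align*}
Hence it suffices to show $\sup_{j\in J_1}\Lambda_j\to 0$ as $(\omega',\alpha',\gamma')\to(\omega,\alpha,\gamma)$.

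The denominator is handled immediately by Lemma \ref{le:inverse}: both $|\Theta(j,\omega,\alpha,\gamma)|$ and $|\Theta(j,\omega',\alpha',\gamma')|$ are bounded below by $K\geq 1$ and, more usefully, by $|j|^2=j_1^2+j_2^2$ (see \eqref{f:inverse1}), uniformly over $j\in J_1$ and over the relevant parameter ranges. For the numerator, I would subtract the two expressions \eqref{f:eta} termwise. Writing $n_j:=\omega_1j_1+\omega_2j_2$, $n'_j:=\omega'_1j_1+\omega'_2j_2$, and $\lambda_j:=(\nu_1^2j_1^2+\nu_2^2j_2^2)^2$, the difference is
\begin{align*}
\Theta(j,\omega',\alpha',\gamma')-\Theta(j,\omega,\alpha,\gamma)
=-(n'^2_j-n_j^2)+\mathrm{i}(\alpha'n'_j-\alpha n_j)+\mathrm{i}(\gamma'n'_j-\gamma n_j)\lambda_j.
\end{align*}
Each of $|n_j|,|n'_j|$ is $O(|j|)$ (with a constant depending only on $\omega_{j^*}$ and $\varrho$), $|n'_j-n_j|\le|\omega'-\omega|\,O(|j|)$, and $\lambda_j=O(|j|^4)$; so a routine estimate gives
\begin{align*}
|\Theta(j,\omega',\alpha',\gamma')-\Theta(j,\omega,\alpha,\gamma)|\le C\big(|\omega'-\omega|+|\alpha'-\alpha|+|\gamma'-\gamma|\big)\,|j|^6,
\end{align*}
with $C$ uniform over $j\in J_1$ and over a fixed bounded parameter neighborhood.

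This last bound, combined with the lower bound $|\Theta|\ge|j|^2$, is not yet enough because $|j|^6/|j|^4=|j|^2\to\infty$; so the main obstacle is the unboundedness of the symbol differences at high Fourier modes, and the key technical point is to \emph{sharpen the denominator estimate}. In fact \eqref{f:inverse1} really gives $|\Theta(j,\omega,\alpha,\gamma)|\ge (j_1^2+j_2^2)\big(\mu(\min\{\nu_1,\nu_2\})^4(j_1^2+j_2^2)-2C^2\big)\ge c\,|j|^4$ for $|j|^2\ge K$ with $K$ enlarged if necessary and $c>0$ a fixed constant; using this quartic lower bound for \emph{both} factors in the denominator of $\Lambda_j$ yields
\begin{align*}
\Lambda_j\le \frac{C\big(|\omega'-\omega|+|\alpha'-\alpha|+|\gamma'-\gamma|\big)\,|j|^6}{c^2|j|^8}
=\frac{C}{c^2}\,\frac{|\omega'-\omega|+|\alpha'-\alpha|+|\gamma'-\gamma|}{|j|^2}
\le \frac{C}{c^2 K}\big(|\omega'-\omega|+|\alpha'-\alpha|+|\gamma'-\gamma|\big),
\end{align*}
uniformly in $j\in J_1$. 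Taking the supremum over $j\in J_1$ and then letting $(\omega',\alpha',\gamma')\to(\omega,\alpha,\gamma)$ shows $\|L^{-1}_{\omega,\alpha,\gamma}-L^{-1}_{\omega',\alpha',\gamma'}\|_{\mathcal{L}(Y\cap H^s)}\to 0$, i.e. continuity in the uniform operator topology. (The same computation, with $\omega'$ replaced by $\omega$ and the parameter increment kept infinitesimal, would in fact give Fréchet differentiability, but only norm-continuity is needed here.) I would remark that it is precisely the biharmonic term, which makes $|\Theta|$ grow like $|j|^4$ while the perturbation grows only like $|j|^6$ — wait, rather: while the relevant ratio decays like $|j|^{-2}$ — that makes the argument work without any small-divisor analysis.
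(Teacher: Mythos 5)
Your argument is correct and is essentially a fleshed-out version of the paper's intended proof: the paper disposes of this lemma with the one-line remark that one ``adopts the similar procedure as in the proof of Lemma \ref{le:inverse}'', i.e.\ exactly the diagonal Fourier-symbol estimate you carry out, with the operator-norm difference controlled by $\sup_{j\in J_1}\Lambda_j$ and the denominator handled via \eqref{f:inverse1}. One minor remark: you need not enlarge $K$ (which would be slightly awkward, since $K$ also fixes the splitting $W=Y\oplus Z$ used later), because for $|j|^2\geq K\geq \frac{2C^2+1}{\mu(\min\{\nu_1,\nu_2\})^4}$ the estimate \eqref{f:inverse1} already yields $|\Theta(j,\omega,\alpha,\gamma)|\geq \frac{\mu(\min\{\nu_1,\nu_2\})^4}{2C^2+1}\,|j|^4$, which is all your quartic lower bound requires.
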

\begin{proof}
We shall adopt the similar procedure as in the proof of Lemma \ref{le:inverse}.
\end{proof}
Because of Lemma \ref{le:inverse}, equation \eqref{f:range} turns into
\begin{align}\label{f:range3}
y-L^{-1}_{\omega,\alpha,\gamma}\Pi_{Y}F(\omega,v(\rho)+y+z)=0.
\end{align}
Due to Lemma \ref{le:smoothness} and Lemma \ref{le:inverse}, for $s\geq3$, we define a mapping as follows
\begin{align*}
G_1:&\mathbb{R}^2\times\mathcal{B}_\varrho(\omega_{j^*})\times\mathbb{R}^2\times(Z\cap H^s)\times(Y\cap H^s)\longrightarrow Y\cap H^s,\\
&(\rho,\omega,\alpha,\gamma,z,y)\longmapsto y-L^{-1}_{\omega,\alpha,\gamma}\Pi_{Y}F(\omega,v(\rho)+y+z).
\end{align*}
Notice that the expression of $v(\rho)$ is given by \eqref{f:bir-solution}.
\begin{prop}\label{prop:range}
Let  $s\geq3$. Then equation \eqref{f:range3} admits a solution
\begin{align*}
y=y(\rho,\omega,\alpha,\gamma,z)\in Y\cap H^s
\end{align*}
in a neighborhood  of $\sigma_0$ with $\sigma_0=(0,\omega_{j^*},0,0,0)$. Moreover, $y$, $\partial_{\rho_1}y,\partial_{\rho_2}y$ and $\mathrm{D}_zy$ are continuous near $\sigma_0$ with respect to $\rho,\omega,\alpha,\gamma,z$ . In particular, one has that for $(\rho,z)=(0,0)$,
\begin{align}\label{f:yyyy}
y(0,\omega,\alpha,\gamma,0)=0,\quad \partial_{\rho_k}y(0,\omega,\alpha,\gamma,0)=0,\quad k=1,2.
\end{align}
Finally, there exists an $s$-independent neighborhood $\mathcal{B}_r(\sigma_0)$ of $\sigma_0$ with $r>0$ such that equation \eqref{f:range3} has a unique solution in $C^\infty(\mathbb{T}^2;\mathbb{R})$ which coincides with the solution in $Y\cap H^s$.
\end{prop}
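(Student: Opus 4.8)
The plan is to apply the implicit function theorem to the map $G_1$ at the point $\sigma_0 = (0, \omega_{j^*}, 0, 0, 0)$, viewing $y$ as the unknown and $(\rho, \omega, \alpha, \gamma, z)$ as parameters. First I would verify that $G_1(\sigma_0, 0) = 0$: since $v(0) = 0$ and $z = 0$, we have $F(\omega_{j^*}, 0) = \lambda((\omega_{j^*}\cdot\nabla)0)^{2p+1} = 0$, hence $L^{-1}_{\omega_{j^*},0,0}\Pi_Y F(\omega_{j^*},0) = 0$ and $G_1(\sigma_0, 0) = 0 - 0 = 0$. Next, the crucial point is the invertibility of the partial derivative $\mathrm{D}_y G_1(\sigma_0, 0) \in \mathcal{L}(Y\cap H^s)$. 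By Lemma \ref{le:smoothness}, $\mathrm{D}_\varphi F(\omega,\varphi)[h] = \lambda(2p+1)((\omega\cdot\nabla)\varphi)^{2p}(\omega\cdot\nabla)h$, which vanishes at $\varphi = 0$ since $2p \geq 2 > 0$; therefore $\mathrm{D}_y G_1(\sigma_0, 0)[h] = h - L^{-1}_{\omega_{j^*},0,0}\Pi_Y \mathrm{D}_\varphi F(\omega_{j^*}, 0)[h] = h$, i.e.\ $\mathrm{D}_y G_1(\sigma_0,0) = \mathrm{Id}_{Y\cap H^s}$, which is trivially invertible. I also need $G_1$ to be $C^1$ (indeed enough regularity for the derivatives claimed) near $\sigma_0$: this follows by combining the $C^\infty$ Fréchet smoothness of $F$ from Lemma \ref{le:smoothness}, the continuity of $(\omega,\alpha,\gamma)\mapsto L^{-1}_{\omega,\alpha,\gamma}$ in the uniform operator topology from Lemma \ref{le:continuous} (together with an analogous statement for its derivatives, obtained by differentiating the explicit Fourier multiplier $1/\Theta(j,\omega,\alpha,\gamma)$ using the lower bound \eqref{e3}), and the smooth dependence of $v(\rho)$ on $\rho$ visible from the explicit formula \eqref{f:bir-solution}. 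The implicit function theorem then yields a solution $y = y(\rho,\omega,\alpha,\gamma,z)$ defined and unique in a neighborhood of $\sigma_0$, with $y$, $\partial_{\rho_1}y$, $\partial_{\rho_2}y$, $\mathrm{D}_z y$ continuous there.

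For the identities \eqref{f:yyyy}: setting $(\rho,z) = (0,0)$ and keeping $(\omega,\alpha,\gamma)$ near $(\omega_{j^*},0,0)$, the function $y \equiv 0$ solves $G_1 = 0$ because $v(0)+0+0 = 0$ forces $F(\omega, 0) = 0$; by the uniqueness part of the implicit function theorem, $y(0,\omega,\alpha,\gamma,0) = 0$. Differentiating the identity $G_1(\rho,\omega,\alpha,\gamma,0,y(\rho,\omega,\alpha,\gamma,0)) = 0$ in $\rho_k$ and evaluating at $\rho = 0$ gives $\partial_{\rho_k} y(0,\omega,\alpha,\gamma,0) = (\mathrm{D}_y G_1)^{-1}\big(L^{-1}_{\omega,\alpha,\gamma}\Pi_Y \mathrm{D}_\varphi F(\omega, 0)[\partial_{\rho_k}v(0)]\big)$, and $\mathrm{D}_\varphi F(\omega, 0) = 0$ as above, so this vanishes.

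The final claim — an $s$-independent neighborhood $\mathcal{B}_r(\sigma_0)$ on which the solution lies in $C^\infty(\mathbb{T}^2;\mathbb{R})$ and agrees with the $H^s$-solution — follows from a bootstrap/consistency argument. The key structural input is the smoothing property $L^{-1}_{\omega,\alpha,\gamma}: Y\cap H^s \to Y\cap H^{s+2}$ from Lemma \ref{le:inverse}, which gains two derivatives, combined with the fact that $F$ maps $H^s$ into $H^{s-1}$ (Lemma \ref{le:smoothness}), so that the fixed-point map $y \mapsto L^{-1}_{\omega,\alpha,\gamma}\Pi_Y F(\omega, v(\rho)+y+z)$ sends $H^s$ into $H^{s+1}$. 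Starting from a solution $y \in Y\cap H^{s_0}$ for some fixed $s_0 \geq 3$ and iterating, any solution automatically lies in $\bigcap_{s} H^s = C^\infty$. To get the uniform radius $r$, I would run the contraction-mapping argument quantitatively at a single fixed regularity $s_0 \geq 3$: the Lipschitz constant of the fixed-point map on a ball of radius $\delta$ is controlled, via the algebra property of $H^{s_0}$ and the uniform bound $\|L^{-1}_{\omega,\alpha,\gamma}\|_{\mathcal{L}(H^{s_0})} \leq K^{-1}$ from \eqref{e3}, by a quantity going to $0$ as $(\rho,z)\to 0$ uniformly in $s_0$, so the radius $r$ of validity depends only on the geometric constants $C(s_0)$, $K$, $\varrho$ and not on the higher $s$; uniqueness in $C^\infty$ then follows since a $C^\infty$ solution is in particular an $H^{s_0}$ solution, hence coincides with the unique one there. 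The main obstacle is this last step: one must be careful that the Banach-algebra constant $C(s)$ and the implicit-function-theorem neighborhood do not shrink as $s$ grows, which is why the argument is organized so that existence/uniqueness is proved once at low regularity and higher regularity is obtained a posteriori by the smoothing estimate rather than by re-running the IFT at each $s$.
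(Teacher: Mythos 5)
Your proposal is correct and follows essentially the same route as the paper: apply the implicit function theorem to $G_1$ at $\sigma_0$ (using $F(\omega,0)=0$ and $\mathrm{D}_\varphi F(\omega,0)=0$ so that $\mathrm{D}_yG_1(\sigma_0,0)=\mathrm{Id}$, with regularity of $G_1$ from Lemmas \ref{le:smoothness} and \ref{le:continuous}), obtain \eqref{f:yyyy} from uniqueness and differentiation of the implicit equation, and get the $s$-independent $C^\infty$ statement by fixing one regularity level $\tilde{s}\geq3$ and bootstrapping with the smoothing property of $L^{-1}_{\omega,\alpha,\gamma}$. The quantitative contraction-mapping discussion you add is not needed in the paper's proof but is consistent with it.
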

\begin{proof}
It is apparent from the definition of $G_1$ that $G_1(0,\omega_{j^*},0,0,0,0)=0$. By Lemma \ref{le:smoothness} and Lemma \ref{le:continuous}, the mapping $G_1$ varies continuously in $\rho,\omega,\alpha,\gamma,z,y$. Moreover, $\partial_{\rho_k}G_1,\mathrm{D}_zG_1,\mathrm{D}_yG_1$ exist and are continuous with regard to $\rho,\omega,\alpha,\gamma,z,y$. Observe that for all $\mathrm{y}\in Y\cap H^s$,
\begin{align*}
\mathrm{D}_yF(\omega,v(\rho)+y+z)[\mathrm{y}]\stackrel{\text{Lemma \ref{le:smoothness}} }{=}\lambda(2p+1)((\omega\cdot\nabla)(v(\rho)+y+z))^{2p}(\omega\cdot\nabla)\mathrm{y}.
\end{align*}
Evidently, for $(\rho,z,y)=(0,0,0)$, one arrives at $\mathrm{D}_yF(\omega,0)[\mathrm{y}]=0$.
As a result,
\begin{align*}
\mathrm{D}_{y}G_1(0,\omega_{j^*},0,0,0,0)[\mathrm{y}]=\mathrm{y}.
\end{align*}
Hence, in view of the implicit function theorem, there is a neighborhood  of $\sigma_0$ such that $y(\rho,\omega,\alpha,\gamma,z)\in Y\cap H^s$ is a solution of  equation \eqref{f:range3}. Moreover, $y$, $\partial_{\rho_k}y$ and $\mathrm{D}_zy$ vary continuously in $\rho,\omega,\alpha,\gamma,z$. Note that we can also get the uniqueness property of solutions to equation \eqref{f:range3} coming from the implicit function theorem.

In addition, since
\begin{align*}
G_1(0,\omega,\alpha,\gamma,0,0)=-L^{-1}_{\omega,\alpha,\gamma}\Pi_{Y}F(\omega,0)=0,
\end{align*}
from  uniqueness, it follows that $y(0,\omega,\alpha,\gamma,0)=0$. By  differentiating the implicit equation
\begin{align*}
G_1(\rho,\omega,\alpha,\gamma,z,y(\rho,\omega,\alpha,\gamma,z))=0
\end{align*}
with respect to $\rho_k,k=1,2$, the term $\partial_{\rho_k}G_1(\rho,\omega,\alpha,\gamma,z,y(\rho,\omega,\alpha,\gamma,z))$ is equal to
\begin{align*}
\partial_{\rho_k}y(\rho,\omega,\alpha,\gamma,z)
-L^{-1}_{\omega,\alpha,\gamma}\Pi_{Y}\lambda(2p+1)((\omega\cdot\nabla)(v(\rho)+y(\rho,\omega,\alpha,\gamma,z)+z))^{2p}&\\
\times(\omega\cdot\nabla) (\partial_{\rho_k} v(\rho)+\partial_{\rho_k} y(\rho,\omega,\alpha,\gamma,z))&.
\end{align*}
It is straightforward that $\partial_{\rho_k}G_1=0$ for $(\rho,z,\partial_{\rho_k} y)=(0,0,0)$. By virtue of uniqueness, we arrive at $\partial_{\rho_k}y(0,\omega,\alpha,\gamma,0)=0$.

In particular, we fix $\tilde{s}\geq3$, By the above discussion,  there exists a solution $\tilde{y}:\mathcal{B}_{r}(\sigma_0)\longrightarrow Y\cap H^{\tilde{s}}$ of equation \eqref{f:range3} with
\begin{align*}
\tilde{y}(\rho,\omega,\alpha,\gamma,z)=L^{-1}_{\omega,\alpha,\gamma}
\Pi_{Y}F(\omega,v(\rho)+\tilde{y}(\rho,\omega,\alpha,\gamma,z)+z).
\end{align*}
It follows from Lemma \ref{le:smoothness} and Lemma \ref{le:inverse} that
\begin{align*}
L^{-1}_{\omega,\alpha,\gamma}\Pi_{Y}F(\omega,v(\rho)+\tilde{y}(\rho,\omega,\alpha,\gamma,z)+z)\in Y\cap H^{\tilde{s}+1}.
\end{align*}
This leads to $\tilde{y}(\rho,\omega,\alpha,\gamma,z)\in  Y\cap H^{\tilde{s}+1}$. Using a direct bootstrap argument yields that
\begin{align*}
\tilde{y}(\rho,\omega,\alpha,\gamma,z)\in Y\cap H^{\tilde{s}+k},\quad\forall k\geq0.
\end{align*}
According to Sobolev embedding, we conclude $\tilde{y}(\rho,\omega,\alpha,\gamma,z)\in C^{\infty}(\mathbb{T}^2;\mathbb{R})$. As a consequence, we write $y=\tilde{y}$  by uniqueness.

Hence this ends the proof of the proposition.
\end{proof}

Because of  Proposition \ref{prop:range}, there exists a solution $y=y(\rho,\omega,\alpha,\gamma,z)\in C^{\infty}(\mathbb{T}^2;\mathbb{R})\cap (Y\cap H^s)$  with $s>0$ for equation \eqref{f:range} in an $s$-independent neighborhood of $\sigma_0$. Substituting this into equation \eqref{f:range2} gives that
\begin{align}\label{f:range4}
L_{\omega,\alpha,\gamma}z-\Pi_{Z}F(\omega,v(\rho)+y(\rho,\omega,\alpha,\gamma,z)+z)=0.
\end{align}
Our next purpose is to solve equation \eqref{f:range4}.

\begin{prop}\label{prop:range1}
Let $s>0$. Then equation \eqref{f:range4} has a solution
\begin{align*}
z=z(\rho,\omega,\alpha,\gamma)\in C^{\infty}(\mathbb{T}^2;\mathbb{R})\cap(Z\cap H^s)
\end{align*}
in an $s$-independent neighborhood $\mathcal{B}_{r_1}(\sigma_1)$ of $\sigma_1$ with $\sigma_1=(0,\omega_{j^*},0,0)$ and $r_{1}\leq r$. Furthermore, $z$, $\partial_{\rho_1}z$, and $\partial_{\rho_2}z$ are continuous near $\sigma_1$ with respect to $\rho,\omega,\alpha,\gamma$. In particular, it follows that for $\rho=0$,
\begin{align}\label{f:zzzz}
z(0,\omega,\alpha,\gamma)=0,\quad \partial_{\rho_k}z(0,\omega,\alpha,\gamma)=0,\quad k=1,2.
\end{align}
\end{prop}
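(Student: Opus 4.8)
The idea is to solve the \emph{finite-dimensional} equation \eqref{f:range4} for $z\in Z$ by the implicit function theorem, just as Proposition \ref{prop:range} did for $y$, once we know that $L_{\omega,\alpha,\gamma}$ is invertible on $Z$ near the critical parameters. Since $Z$ is spanned by the finitely many Fourier modes $e^{\mathrm{i}j\cdot\theta}$, $j\in J_2=J\setminus J_1$, the operator $L_{\omega,\alpha,\gamma}$ acts diagonally on $Z$ with entries $\Theta(j,\omega,\alpha,\gamma)$ from \eqref{f:eta}. At $(\omega,\alpha,\gamma)=(\omega_{j^*},0,0)$ the entry $\Theta(j,\omega_{j^*},0,0)$ is precisely the left-hand side of \eqref{f:fixfrequency}, which by Lemma \ref{le:irrational} (resp. Lemma \ref{le:irrational11}) does not vanish for any $j\in J$, in particular for the finitely many $j\in J_2$. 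This non-resonance input is the one genuinely non-routine ingredient, and it is exactly where the arithmetic hypotheses $(\mu,m)\in\tilde{\mathcal{S}}_{\mu,m}$ (resp. $\tilde{\mathcal{S}}'_{\mu,m}$) are used. Because $J_2$ is finite and $(\omega,\alpha,\gamma)\mapsto\Theta(j,\omega,\alpha,\gamma)$ is continuous, one gets $\min_{j\in J_2}|\Theta(j,\omega,\alpha,\gamma)|\geq c_0>0$ on a neighborhood of $(\omega_{j^*},0,0)$, so $L_{\omega,\alpha,\gamma}|_Z$ is invertible there and $(\omega,\alpha,\gamma)\mapsto L^{-1}_{\omega,\alpha,\gamma}|_Z$ is continuous. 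Observe that $Z\cap H^s=Z\subset C^\infty(\mathbb{T}^2;\mathbb{R})$ for every $s>0$, so the $s$-independence and $C^\infty$-regularity claimed in the statement are automatic and no bootstrap argument is needed (in contrast with Proposition \ref{prop:range}).

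Next I would recast \eqref{f:range4} as $G_2(\rho,\omega,\alpha,\gamma,z)=0$, where
\begin{align*}
G_2(\rho,\omega,\alpha,\gamma,z):=z-L^{-1}_{\omega,\alpha,\gamma}\Pi_Z F\bigl(\omega,\,v(\rho)+y(\rho,\omega,\alpha,\gamma,z)+z\bigr),
\end{align*}
with $y$ supplied by Proposition \ref{prop:range} on $\mathcal{B}_r(\sigma_0)$; thus $G_2$ is well defined on $\mathcal{B}_r(\sigma_0)$ after also restricting $\omega$ to the neighborhood from the previous step. One has $G_2(0,\omega_{j^*},0,0,0)=0$ since $v(0)=0$, $y(0,\omega_{j^*},0,0,0)=0$ by \eqref{f:yyyy}, and $F(\omega,0)=0$. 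By Lemma \ref{le:smoothness} together with the continuity of $\partial_{\rho_k}y$ and $\mathrm{D}_zy$ from Proposition \ref{prop:range}, $G_2$ is continuous and so are $\partial_{\rho_k}G_2$ and $\mathrm{D}_zG_2$. Since $\mathrm{D}F(\omega,\varphi)[h]=\lambda(2p+1)((\omega\cdot\nabla)\varphi)^{2p}(\omega\cdot\nabla)h$ vanishes at $\varphi=0$ and $y(0,\omega_{j^*},0,0,0)=0$, the chain rule gives $\mathrm{D}_zG_2(0,\omega_{j^*},0,0,0)[\mathrm{z}]=\mathrm{z}$, i.e. the identity on $Z$. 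The implicit function theorem then yields an $s$-independent neighborhood $\mathcal{B}_{r_1}(\sigma_1)$ with $r_1\leq r$ and a unique solution $z=z(\rho,\omega,\alpha,\gamma)\in Z$ there, depending continuously on $(\rho,\omega,\alpha,\gamma)$, together with continuous $\partial_{\rho_1}z,\partial_{\rho_2}z$.

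Finally, the boundary values at $\rho=0$ follow by uniqueness. Evaluating $G_2$ at $\rho=0$, $z=0$ again gives $0$ (using $y(0,\omega,\alpha,\gamma,0)=0$ from \eqref{f:yyyy} and $F(\omega,0)=0$), hence $z(0,\omega,\alpha,\gamma)=0$. Differentiating the identity $G_2(\rho,\omega,\alpha,\gamma,z(\rho,\omega,\alpha,\gamma))\equiv0$ with respect to $\rho_k$ and evaluating at $\rho=0$, every surviving term carries the factor $\mathrm{D}F\bigl(\omega,v(\rho)+y+z\bigr)$ evaluated at the vanishing argument $v(0)+y(0,\omega,\alpha,\gamma,0)+z(0,\omega,\alpha,\gamma)=0$, hence vanishes, which forces $\partial_{\rho_k}z(0,\omega,\alpha,\gamma)=0$ for $k=1,2$. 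The main obstacle throughout is the non-resonance statement of Lemma \ref{le:irrational}/\ref{le:irrational11} that keeps $L_{\omega,\alpha,\gamma}$ invertible on $Z$; the remainder is the same finite-dimensional implicit function theorem argument already carried out for $y$.
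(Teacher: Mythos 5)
Your proposal is correct and follows essentially the same route as the paper: an implicit function theorem argument for the finite-dimensional $Z$-equation, with $G_2(0,\omega_{j^*},0,0,0)=0$, invertibility of $\mathrm{D}_zG_2$ at the critical point because $\mathrm{D}F$ vanishes at the zero argument and the diagonal entries $\Theta(j,\omega_{j^*},0,0)$, $j\in J_2$, are nonzero (the paper phrases this as $Z$ lying in the orthogonal complement of the kernel of $L_{\omega_{j^*},0,0}$, which is the same non-resonance content as Lemmas \ref{le:irrational}/\ref{le:irrational11}), and the identities \eqref{f:zzzz} obtained by uniqueness and differentiating the implicit equation. The only cosmetic differences are that you pre-invert $L_{\omega,\alpha,\gamma}$ on $Z$ over a whole parameter neighborhood (the paper keeps $L_{\omega,\alpha,\gamma}z$ inside $G_2$ and needs invertibility only at $(\omega_{j^*},0,0)$) and that you note the $C^\infty$ and $s$-independence claims are automatic from finite-dimensionality rather than repeating the bootstrap; both are harmless.
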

\begin{proof}
Let us define a mapping as follows
\begin{align*}
G_2:\mathbb{R}^2\times\mathcal{B}_\varrho(\omega_{j^*})\times\mathbb{R}^2\times(Z\cap H^s)\longrightarrow& Z\cap H^s,\\
(\rho,\omega,\alpha,\gamma,z)\longmapsto& L_{\omega,\alpha,\gamma}z-\Pi_{Z}F(\omega,v(\rho)+y(\rho,\omega,\alpha,\gamma,z)+z).
\end{align*}
Moreover, observe that  the space $Z$ is finite dimensional by the definition of the set $J_2$.

Clearly, one has $G_2(0,\omega_{j^*},0,0,0)=0$. By Lemma \ref{le:smoothness}, the mapping $G_2$ is continuous with respect to $\rho,\omega,\alpha,\gamma,z$, and  $\partial_{\rho_k}G_2, \mathrm{D}_zG_2$ exist  varying continuously in $\rho,\omega,\alpha,\gamma,z$.  Since $Z$ is a subspace of the orthogonal complement of the kernel of the operator $L_{\omega_{j^*},0,0}$, the operator
\begin{align*}
\mathrm{D}_zG_2(0,\omega_{j^*},0,0,0)=L_{\omega_{j^*},0,0}
\end{align*}
is invertible from $Z\cap H^s$ to $Z\cap H^s$. Consequently,  according to the implicit function theorem, there exists  a neighborhood  of $\sigma_1$ such that  $z=z(\rho,\omega,\alpha,\gamma)$, with values in $Z\cap H^s$,  solves equation \eqref{f:range4}.

In addition, proceeding as in the proof of Proposition \ref{prop:range} yields that these equalities in \eqref{f:zzzz} hold and that $z(\rho,\omega,\alpha,\gamma)$ belongs to $ C^{\infty}(\mathbb{T}^2;\mathbb{R})$. We have thus proved the proposition.
\end{proof}

In conclusion, if we denote  $s>0$, then it follows from Propositions \ref{prop:range}--\ref{prop:range1} that the range equation \eqref{d3} has a solution $w=w(\rho,\omega,\alpha,\gamma)\in C^{\infty}(\mathbb{T}^2;\mathbb{R})\cap (W\cap H^s)$ in an $s$-independent neighborhood of $(0,\omega_{j^*},0,0)$. Moreover,
\begin{align*}
&w(\rho,\omega,\alpha,\gamma)=z(\rho,\omega,\alpha,\gamma)+y(\rho,\omega,\alpha,\gamma,z(\rho,\omega,\alpha,\gamma)),
\end{align*}
where $y\in C^{\infty}(\mathbb{T}^2;\mathbb{R})\cap (Y\cap H^s)$ and $z\in C^{\infty}(\mathbb{T}^2;\mathbb{R})\cap (Z\cap H^s)$ are solutions of equations \eqref{f:range}--\eqref{f:range2}, respectively.

The following proposition summarizes the existence of quasi-periodic travelling wave solutions to the range equation \eqref{d3}. Moreover, if one of the amplitudes is set to zero, then there exists a  family of rotating wave solutions with one parameter  to equation \eqref{d3}.

\begin{prop}\label{prop:travelling}
Let $s>0$. Then the range equation \eqref{d3} admits a solution  $w=w(\rho,\omega,\alpha,\gamma)$, with values in $C^{\infty}(\mathbb{T}^2;\mathbb{R})\cap (W\cap H^s)$, satisfying
\begin{flalign*}
&\mathrm{(i)}\quad w(0,\rho_2,\omega,\alpha,\gamma)\text{ is $\theta_1$-independent, i.e., } \partial_{\theta_1}w(0,\rho_2,\omega,\alpha,\gamma)(\theta)=0,&\\
&\mathrm{(ii)}\quad w(\rho_1,0,\omega,\alpha,\gamma)\text{ is $\theta_2$-independent, i.e., } \partial_{\theta_2}w(\rho_1,0,\omega,\alpha,\gamma)(\theta)=0.
\end{flalign*}
\end{prop}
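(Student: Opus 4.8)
The plan is to exploit the translation invariance of the range equation \eqref{d3}, combined with the uniqueness of its solution already established in Propositions \ref{prop:range}--\ref{prop:range1}. For $c\in\mathbb{R}$ introduce the two shift operators $T^{(1)}_c\varphi(\theta_1,\theta_2)=\varphi(\theta_1+c,\theta_2)$ and $T^{(2)}_c\varphi(\theta_1,\theta_2)=\varphi(\theta_1,\theta_2+c)$. Each $T^{(k)}_c$ is a linear isometry of $H^s$ for every $s$; it commutes with the constant-coefficient operator $L_{\omega,\alpha,\gamma}$; it commutes with $\omega\cdot\nabla$ and hence, via Lemma \ref{le:smoothness}, with the composition operator $F(\omega,\cdot)$; and since the spaces $V,W,Y,Z$ are defined purely by restricting the Fourier support, it commutes with each of the projections $\Pi_V,\Pi_W,\Pi_Y,\Pi_Z$. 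Finally, from the explicit formula \eqref{f:bir-solution} one has $T^{(1)}_c v(0,\rho_2)=v(0,\rho_2)$ (since $v(0,\rho_2)$ involves only the modes $e^{\pm\mathrm{i}j^*_2\theta_2}$) and, symmetrically, $T^{(2)}_c v(\rho_1,0)=v(\rho_1,0)$.

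Now fix the remaining parameters, set $\rho_1=0$, and let $w=w(0,\rho_2,\omega,\alpha,\gamma)$ be the solution of \eqref{d3} furnished by Propositions \ref{prop:range}--\ref{prop:range1}. Applying $T^{(1)}_c$ to \eqref{d3} and using the commutation properties above together with $T^{(1)}_c v(0,\rho_2)=v(0,\rho_2)$, one sees that $T^{(1)}_c w$ solves \eqref{d3} for the very same data $(0,\rho_2,\omega,\alpha,\gamma)$. Because $Y$ and $Z$ are translation-invariant and $T^{(1)}_c$ is an isometry, the components $T^{(1)}_c z$ and $T^{(1)}_c y$ again solve the split system \eqref{f:range3}--\eqref{f:range4} and remain inside the balls on which the implicit function theorem was invoked; hence the uniqueness clauses of Propositions \ref{prop:range}--\ref{prop:range1} force $T^{(1)}_c w=w$ for every $c\in\mathbb{R}$. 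Thus $w(\theta_1+c,\theta_2)=w(\theta_1,\theta_2)$ for all $c$, i.e., $\partial_{\theta_1}w(0,\rho_2,\omega,\alpha,\gamma)=0$, which is assertion (i). The proof of (ii) is identical, with $T^{(1)}_c$ replaced by $T^{(2)}_c$, $\rho_1=0$ replaced by $\rho_2=0$, and using $T^{(2)}_c v(\rho_1,0)=v(\rho_1,0)$.

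The only genuinely delicate point is the bookkeeping in the middle step: one must verify carefully that, after applying $T^{(1)}_c$, the pair $(T^{(1)}_c z,\,T^{(1)}_c y)$ still satisfies \emph{both} reduced equations \eqref{f:range3} and \eqref{f:range4} with the same parameters and still lies in the neighbourhood where uniqueness was proved, so that the two-step uniqueness (first $z$, then $y$) may be chained. Once this is checked no new estimates are required. An equivalent route, which avoids tracking the split, is to observe that $F$, $L_{\omega,\alpha,\gamma}$ and the projections all preserve the closed subspace of $\theta_1$-independent functions of $W\cap H^s$, that $v(0,\rho_2)$ lies in it, so that the whole fixed-point scheme of Sections \ref{sec:2}--\ref{sec:3} can be run inside this subspace; uniqueness then identifies that solution with $w(0,\rho_2,\omega,\alpha,\gamma)$, giving (i), and symmetrically (ii).
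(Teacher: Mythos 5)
Your argument is correct, but your primary route is genuinely different from the paper's. The paper proves (i) by performing a second Lyapunov--Schmidt reduction inside $W$: it splits $W\cap H^s=(\mathfrak{Y}\cap H^s)\oplus(\mathfrak{Z}\cap H^s)$, where $\mathfrak{Y}$ consists of functions of $\theta_2$ alone, observes that for $\rho_1=0$ the ansatz $\mathfrak{z}=0$ solves the $\mathfrak{Z}$-equation because $F(\omega,\cdot)$ maps $\theta_2$-only functions to $\theta_2$-only functions, re-runs the implicit function theorem scheme of Propositions \ref{prop:range}--\ref{prop:range1} inside the invariant subspace $\mathfrak{Y}\cap H^s$ to solve the remaining equation, and then invokes uniqueness to identify that solution with $w(0,\rho_2,\omega,\alpha,\gamma)$; this is exactly the ``equivalent route'' you sketch in your final remark. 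Your main argument instead derives (i) from the $\theta_1$-translation symmetry of the problem: since $T^{(1)}_c$ commutes with $L_{\omega,\alpha,\gamma}$, with $F(\omega,\cdot)$ (this follows from the explicit form of $F$, not really from Lemma \ref{le:smoothness}, a minor misattribution), and with the Fourier-multiplier projections, and fixes $v(0,\rho_2)$, the shifted function $T^{(1)}_c w$ solves \eqref{d3} with the same data; local uniqueness then forces $T^{(1)}_c w=w$ for all $c$, hence $\partial_{\theta_1}w=0$. The delicate point you flag is the right one: the two-step uniqueness ($z$ then $y$) must be chained, and one needs the shifted pair to stay in the neighbourhoods where the implicit function theorem was applied; this is fine because $T^{(1)}_c$ is an $H^s$-isometry and the neighbourhoods in the $y$- and $z$-slots may be taken to be balls centred at the origin, so $T^{(1)}_c y=y(0,\rho_2,\omega,\alpha,\gamma,T^{(1)}_c z)$ and then $T^{(1)}_c z=z$, $T^{(1)}_c y=y$. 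What each approach buys: your symmetry argument is shorter and avoids re-running the fixed-point construction, at the cost of the careful bookkeeping about uniqueness neighbourhoods; the paper's invariant-subspace argument is more pedestrian but constructs the $\theta_1$-independent solution explicitly and fits the template already set up in Section \ref{sec:3}.
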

\begin{proof}
We  only consider the case $\mathrm{(i)}$. The remainder of the arguments can be stated by the analogous procedure as in the proof of the case $\mathrm{(i)}$.

For $w\in W\cap H^s$, it is straightforward that
\begin{align*}
w(\theta)=w_0(\theta_2)+\tilde{w}(\theta),\quad \theta=(\theta_1,\theta_2),
\end{align*}
where $\tilde{w}(\theta)=\sum_{j\in\mathbb{Z}^2,j_1\neq0}w_{j}e^{\mathrm{i}j\cdot\theta}$. Let $\mathfrak{Y}$ be the space made of functions depending only on $\theta_2$ and $\mathfrak{Z}$ be $H^0$-orthogonal complement of $\mathfrak{Y}$. Denote by $\Pi_{\mathfrak{Y}}$ and $\Pi_\mathfrak{Z}$ the projectors onto $\mathfrak{Y}$ and $\mathfrak{Z}$, respectively. With respect to the following decomposition
\begin{align*}
W\cap H^s=(\mathfrak{Y}\cap H^s)\oplus (\mathfrak{Z}\cap H^s),
\end{align*}
by performing the Lyapunov--Schmidt reduction,  equation \eqref{d3} is equivalent to
\begin{align}
L_{\omega,\alpha,\gamma}\mathfrak{y}=\Pi_{\mathfrak{Y}}F(\omega,v(\rho)+\mathfrak{y}+\mathfrak{z}),\label{f:eq1}\\
L_{\omega,\alpha,\gamma}\mathfrak{z}=\Pi_{\mathfrak{Z}}F(\omega,v(\rho)+\mathfrak{y}+\mathfrak{z}),\label{f:eq2}
\end{align}
where $w=\mathfrak{y}+\mathfrak{z}$ with $\mathfrak{y}\in \mathfrak{Y}$, $\mathfrak{z}\in \mathfrak{Z}$.  It follows from \eqref{f:bir-solution} that for $\rho_1=0$,
\begin{align*}
v(0,\rho_2)(\theta)=2\rho_2\cos(j^*_2\theta_2).
\end{align*}
If we assume that $\mathfrak{z}|_{\rho_1=0}=0$,  then equation \eqref{f:eq2} with $\rho_1=0$ turns into
\begin{align*}
0=&\Pi_{\mathfrak{Z}}F(\omega,2\rho_2\cos(j^*_2\theta_2)+\mathfrak{y}(\theta_2)|_{\rho_1=0})\\
=&\lambda\Pi_{\mathfrak{Z}}(-2 \omega_2j^*_2\rho_2\sin( j^*_2\theta_2)+\omega_2\partial_{\theta_2}\mathfrak{y}(\theta_2)|_{\rho_1=0})^{2p+1}\\
=&0.
\end{align*}
Because of the uniqueness coming from the implicit function theorem seen in the proof of Propositions \ref{prop:range}--\ref{prop:range1},  we read that $\mathfrak{z}|_{\rho_1=0}=0$ can solve equation \eqref{f:eq2} with $\rho_1=0$. Hence equation \eqref{f:eq1} with $\rho_1=0$ is equal to
\begin{align}\label{zero}
L_{\omega,\alpha,\gamma}\mathfrak{y}(\theta_2)|_{\rho_1=0}=\Pi_{\mathfrak{Y}}F(\omega,2\rho_2\cos( j^*_2\theta_2)+\mathfrak{y}(\theta_2)|_{\rho_1=0}).
\end{align}
Observe that the subspace $\mathfrak{Y}\cap H^s$ of $W\cap H^s$ is invariant for $L_{\omega,\alpha,\gamma}$, and $F(\omega,\cdot)$. We shall adopt the similar procedure as in the proof of Propositions \ref{prop:range}--\ref{prop:range1} with $\rho_1=0$ to solve equation \eqref{zero}. By virtue of uniqueness, we obtain $w(0,\rho_2,\omega,\alpha,\gamma)=\mathfrak{y}(0,\rho_2,\omega,\alpha,\gamma)$, with $\Pi_{\mathfrak{Z}}w(0,\rho_2,\omega,\alpha,\gamma)=0$.

Thus we complete the proof of the lemma.
\end{proof}

In addition, we wish to get the smoothness of solutions of the range equation \eqref{d3} with respect to $\rho,\omega,\alpha,\gamma$.  Let us define
\begin{align*}
\Gamma(j,\omega,\alpha,\gamma):=\frac{1}{\Theta(j,\omega,\alpha,\gamma)}=:\Upsilon(\Theta(j,\omega,\alpha,\gamma)),
\end{align*}
where $\Theta$ is given by \eqref{f:eta}.

The following lemma addresses the smoothness of $L^{-1}_{\omega,\alpha,\gamma}$ with respect to $\omega,\alpha,\gamma$.

\begin{lemm}\label{le:smoothness3}
Let $\mathcal{B}_\varrho(\omega_{j^*})$ be as seen in \eqref{f:setB}. For $y\in C^{\infty}(\mathbb{T}^2;\mathbb{R})\cap (Y\cap H^s)$ with $s>0$,  the mapping $\mathcal{B}_\varrho(\omega_{j^*})\times\mathbb{R}^2\ni(\omega,\alpha,\gamma)\longmapsto L^{-1}_{\omega,\alpha,\gamma}y\in Y\cap H^s$ is $C^{\infty}$ with
\begin{align}\label{f:smoothness}
\mathrm{D}^{\delta} L^{-1}_{\omega,\alpha,\gamma}y(\theta)=\sum_{j\in J_1}\mathrm{D}^{\delta}\Gamma(j,\omega,\alpha,\gamma)y_je^{\mathrm{i}j\cdot \theta},
\end{align}
where $\mathrm{D}^{\delta}=\partial_{\omega_1}^{\delta_1}\partial_{\omega_2}^{\delta_2}
\partial_{\alpha}^{\delta_3}\partial_{\gamma}^{\delta_4}$
with $\delta_i\in\mathbb{N},i=1,\cdots,4$.
\end{lemm}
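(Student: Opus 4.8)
The plan is to verify formula \eqref{f:smoothness} termwise and then promote the termwise differentiation to genuine Fréchet differentiability using the uniform lower bound \eqref{e3} from Lemma \ref{le:inverse}. First I would fix $(\omega,\alpha,\gamma)\in\mathcal{B}_\varrho(\omega_{j^*})\times\mathbb{R}^2$ and observe that, since $\Theta(j,\omega,\alpha,\gamma)$ as defined in \eqref{f:eta} is a polynomial in $(\omega_1,\omega_2,\alpha,\gamma)$ with coefficients depending on $j$, the reciprocal $\Gamma(j,\omega,\alpha,\gamma)=1/\Theta(j,\omega,\alpha,\gamma)$ is real-analytic in $(\omega,\alpha,\gamma)$ on the open set where $\Theta\neq0$; by \eqref{e3} this holds for every $j\in J_1$ on all of $\mathcal{B}_\varrho(\omega_{j^*})\times\mathbb{R}^2$. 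So each coefficient $\mathrm{D}^\delta\Gamma(j,\omega,\alpha,\gamma)$ is well-defined, and the candidate formula \eqref{f:smoothness} is at least formally meaningful.

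Next I would establish the key quantitative estimate: for every multi-index $\delta$ there is a constant $C_\delta$ (depending on $\delta$, $\varrho$, $\omega_{j^*}$, and the fixed bounds but not on $j$) such that
\begin{align*}
|\mathrm{D}^\delta\Gamma(j,\omega,\alpha,\gamma)|\le C_\delta\,(1+|j|)^{N_\delta}
\end{align*}
for all $j\in J_1$ and all parameters in the relevant neighborhood, where $N_\delta$ is some integer. This follows from the quotient/chain rule applied to $\Gamma=\Upsilon\circ\Theta$ with $\Upsilon(z)=1/z$: each $\mathrm{D}^\delta\Gamma$ is a finite sum of terms of the form $(\text{product of derivatives of }\Theta)/\Theta^{k}$ with $k\le|\delta|+1$, the numerators are polynomially bounded in $|j|$ (each $\partial$-derivative of $\Theta$ in a parameter lowers the polynomial degree in $j$ but the worst case $\partial_\gamma\Theta$ still contributes a factor $O(|j|^5)$), and the denominators are bounded below by powers of $|\Theta|\ge K$ which only help. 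Crucially, the degree of the denominator can be made to dominate: from \eqref{f:inverse1} one has the stronger bound $|\Theta(j,\omega,\alpha,\gamma)|\ge c|j|^{4}$ on $J_1$ (for $K$ chosen as in Lemma \ref{le:inverse}), so dividing by $\Theta^{k}$ gains $|j|^{-4k}$ while the numerator grows at most like $|j|^{5|\delta|}$ or so; in any case the point is that $\mathrm{D}^\delta\Gamma(j,\cdot)\to 0$ fast enough — in fact one gets $|\mathrm{D}^\delta\Gamma(j,\cdot)|\le C_\delta|j|^{-2}$, say — that the series in \eqref{f:smoothness} defines an element of $Y\cap H^s$ whenever $y\in Y\cap H^s$, uniformly in the parameters on a neighborhood.

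With the estimate in hand the remaining step is standard: I would show by induction on $|\delta|$ that the map $(\omega,\alpha,\gamma)\mapsto L^{-1}_{\omega,\alpha,\gamma}y$ is $C^{|\delta|}$ with the stated derivative. For the inductive step, given that $\mathrm{D}^{\delta'}L^{-1}_{\omega,\alpha,\gamma}y=\sum_{j\in J_1}\mathrm{D}^{\delta'}\Gamma(j,\cdot)y_je^{\mathrm{i}j\cdot\theta}$ for $|\delta'|=|\delta|-1$, one differentiates under the summation sign: the difference quotient in the remaining variable converges termwise to $\mathrm{D}^\delta\Gamma(j,\cdot)y_j$, and the uniform polynomial bound above together with the weighted $\ell^2$ convergence of $\sum(1+|j|^{2s})|y_j|^2$ gives, by dominated convergence in $H^s$, that the difference quotient of the series converges in $Y\cap H^s$ to the differentiated series; continuity of $\mathrm{D}^\delta\Gamma(j,\cdot)$ in the parameters (it is a rational function with nonvanishing denominator) plus the same domination argument gives continuity of the sum, exactly as in the proof of Lemma \ref{le:continuous}. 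Since $|\delta|$ is arbitrary, the map is $C^\infty$.

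The main obstacle is the bookkeeping in the second step: one must check that no matter how many times one differentiates in $\alpha$ or $\gamma$ (the $\gamma$-derivative of $\Theta$ carries the worst power $|j|^5$ from the term $\gamma(\omega\cdot\nabla)\Delta_\nu^2$), the accumulated numerator growth is always beaten by the denominator's lower bound $|\Theta|\ge c|j|^4$ raised to the appropriate power $k\ge|\delta|+1$ coming from the quotient rule — this is where the biharmonic structure, which makes $\Theta$ grow like $|j|^4$, is essential and is precisely the mechanism that rules out small divisors at infinity. Once this uniform-in-$j$ polynomial-decay bound on $\mathrm{D}^\delta\Gamma(j,\cdot)$ is secured, the passage to Fréchet $C^\infty$ regularity of $L^{-1}_{\omega,\alpha,\gamma}$ is routine term-by-term differentiation justified by dominated convergence in the Hilbert space $Y\cap H^s$.
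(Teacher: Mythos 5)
Your overall strategy --- termwise differentiation of the Fourier series, quotient/Fa\`a di Bruno bookkeeping for $\mathrm{D}^{\delta}\Gamma$, induction on $|\delta|$ with a Taylor-remainder and dominated-convergence argument, all resting on the lower bound \eqref{e3} --- is the same as the paper's. However, your key quantitative estimate is false, and the error is load-bearing. You claim that the denominator $\Theta^{k}$ with $k\le|\delta|+1$ always dominates the numerator, so that $|\mathrm{D}^{\delta}\Gamma(j,\cdot)|\le C_{\delta}|j|^{-2}$ uniformly in $\delta$, and you use this to conclude that the differentiated series lies in $Y\cap H^s$ ``whenever $y\in Y\cap H^s$''. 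Test this on pure $\gamma$-derivatives: since $\Theta$ is affine in $\gamma$, one has $\partial_{\gamma}^{n}\Gamma=(-1)^{n}n!\,\bigl(\mathrm{i}(\omega\cdot j)(\nu_1^2j_1^2+\nu_2^2j_2^2)^2\bigr)^{n}/\Theta^{n+1}$, and at $\alpha=\gamma=0$, along $j=(j_1,0)\in J_1$ with $j_1\to\infty$, its modulus is comparable to $j_1^{5n}/j_1^{4(n+1)}=j_1^{\,n-4}$, which is unbounded in $j$ as soon as $n\ge5$. More generally a term with $k$ first-order factors in the numerator has size of order $|j|^{5k}/|j|^{4(k+1)}=|j|^{k-4}$, so the accumulated numerator growth is \emph{not} always beaten by the denominator: $\mathrm{D}^{\delta}\Gamma(j,\cdot)$ grows polynomially in $|j|$ at a rate increasing with $|\delta|$. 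Consequently your dominated-convergence step fails for $y$ merely in $H^s$; indeed, if your decay estimate were true the hypothesis $y\in C^{\infty}(\mathbb{T}^2;\mathbb{R})$ in the statement would be superfluous, which is a warning sign.

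The repair is exactly the paper's route: prove only a polynomial bound, $|\mathrm{D}^{\delta}\Gamma(j,\omega,\alpha,\gamma)|\le C|j|^{6|\delta|}$ as in \eqref{f:d_zeta}, valid on $\mathcal{B}_\varrho(\omega_{j^*})\times\Omega$ with $\Omega\subset\mathbb{R}^2$ an arbitrary bounded open set (you need this restriction too, since $\partial_{\omega_i}\Theta$ contains the term $\mathrm{i}\gamma j_i(\nu_1^2j_1^2+\nu_2^2j_2^2)^2$ whose size depends on $|\gamma|$, so constants must depend on $\Omega$), using \eqref{e3} to bound $|\Upsilon^{(k)}(\Theta)|\le C_k''/K^{k+1}$ and $|\mathrm{D}^{\tau}\Theta|\le C_1(1+(\max\{\nu_1,\nu_2\})^4)|j|^{6}$ for $|\tau|\le2$. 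Then the hypothesis $y\in C^{\infty}(\mathbb{T}^2;\mathbb{R})=\cap_{s\ge0}H^s$ is what makes the scheme work: the rapidly decaying coefficients $y_j$ absorb the growing factor $|j|^{6|\delta|}$ (and $|j|^{12(\mathrm{p}+1)}$ in the Taylor-remainder estimate), so the differentiated series and the difference-quotient remainders converge in every $H^s$. With the estimate corrected and the $C^\infty$ assumption on $y$ actually invoked, your induction and continuity arguments go through as in the paper.
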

\begin{proof}
By an inductive argument,  one has
\begin{align}\label{f:zeta}
\textstyle \mathrm{D}^{\delta}\Gamma=\sum^{|\delta|}_{k=1}\Upsilon^{(k)}(\Theta)P_{k}[\mathrm{D}^{\chi_1}\Theta,\cdots,\mathrm{D}^{\chi_{n(\delta)}}\Theta], \quad|\delta|=\sum^4_{i=1}\delta_i.
\end{align}
Observe that $P_k$ is a polynomial in $n(\delta)$ variables of order at most $|\delta|$, that is
\begin{align*}
\textstyle P_{k}[\mathrm{D}^{\chi_1}\Theta,\cdots,\mathrm{D}^{\chi_{n(\delta)}}\Theta]=\sum_{|\zeta|\leq|\delta|}C^{k}_{\zeta}
(\mathrm{D}^{\chi_1}\Theta)^{\zeta_1}(\mathrm{D}^{\chi_{2}}\Theta)^{\zeta_2}\cdots (\mathrm{D}^{\chi_{n(\delta)}}\Theta)^{\zeta_{n(\delta)}},
\end{align*}
where $n(\delta)$ is the number of partial derivatives of $\Theta$ with respect to $\omega,\alpha,\gamma$ of order at most $|\delta|$,  $\chi_{i},i=1,\cdots,n(\delta)$ are multi-indices of order at most $|\delta|$, and $\zeta=(\zeta_1,\cdots,\zeta_{n(\delta)})$ is an $n(\delta)$-tuple of nonnegative integers.

Let $\Omega$ be any bounded open set in $\mathbb{R}^2$. If we could show that there exists some positive constant $C=C(\varrho,\Omega,\delta,\nu_1,\nu_2)$ such that for all $j\in J_1$ and $(\omega,\alpha,\gamma)\in\mathcal{B}_\varrho(\omega_{j^*})\times \Omega$,
\begin{align}\label{f:d_zeta}
|\mathrm{D}^{\delta}\Gamma(j,\omega,\alpha,\gamma)|\leq C|j|^{6|\delta|},\quad\forall |\delta|\geq1,
\end{align}
then \eqref{f:smoothness} will be proved by induction. We further derive the continuity of the corresponding partial derivatives.

Suppose that \eqref{f:smoothness} holds for $\mathrm{p}\geq1$ (note that the case $\mathrm{p}=1$ may be handled in the same way). Denote by $\tau=(\tau_1,\cdots,\tau_4)$ a multi-index with $|\tau|=\mathrm{p}+1$. Without loss of generality, we let  $\tau_3\geq1$. Moreover, denoting  $\varsigma=(\tau_1,\tau_2,\tau_3-1,\tau_4)$, it is straightforward that $|\varsigma|=\mathrm{p}$. For $y\in C^{\infty}(\mathbb{T}^2;\mathbb{R})\cap (Y\cap H^s)$, one carries out
\begin{align*}
\frac{1}{\upsilon^2}\|\mathrm{D}^{\varsigma}L^{-1}_{\omega,\alpha+\upsilon,\gamma}y-\mathrm{D}^{\varsigma}L^{-1}_{\omega,\alpha,\gamma}y
-\upsilon\partial_{\alpha}\mathrm{D}^{\varsigma}L^{-1}_{\omega,\alpha,\gamma}y\|^2_s
=\frac{1}{\upsilon^2}\sum_{j\in J_1}(1+|j|^{2s})|y_{j}|^2|R(j,\omega,\alpha,\gamma,\upsilon)|^2&,
\end{align*}
where
\begin{align*}
R(j,\omega,\alpha,\gamma,\upsilon)=\mathrm{D}^{\varsigma}\Gamma(j,\omega,\alpha+\upsilon,\gamma)
-\mathrm{D}^{\varsigma}\Gamma(j,\omega,\alpha,\gamma)
-\upsilon\partial_{\alpha}\mathrm{D}^{\varsigma}\Gamma(j,\omega,\alpha,\gamma).
\end{align*}
Moreover,
\begin{align*}
|R(j,\omega,\alpha,\gamma,\upsilon)|=&\textstyle| \int^1_0\partial_{\alpha}\mathrm{D}^{\varsigma}\Gamma(j,\omega,\alpha+\mathfrak{v}\upsilon,\gamma)-\partial_{\alpha}\mathrm{D}^{\varsigma}\Gamma(j,\omega,\alpha,\gamma)\mathrm{d}\mathfrak{v}||\upsilon|\\
\leq&\max_{\mathfrak{v}\in[0,1]}|\partial_{\alpha}\mathrm{D}^{\varsigma}\Gamma(j,\omega,\alpha+\mathfrak{v}\upsilon,\gamma)-\partial_{\alpha}\mathrm{D}^{\varsigma}\Gamma(j,\omega,\alpha,\gamma)||\upsilon|\\
=&|\partial_{\alpha}\mathrm{D}^{\varsigma}\Gamma(j,\omega,\tilde{\alpha},\gamma)-\partial_{\alpha}\mathrm{D}^{\varsigma}\Gamma(j,\omega,\alpha,\gamma)||\upsilon|
\end{align*}
with $\tilde{\alpha}\in[\alpha,\alpha+\upsilon]$. If $\upsilon$ is taken small enough, then $ (\tilde{\alpha},\gamma)\in \Omega$. Hence it can be seen from \eqref{f:d_zeta}  that
\begin{align*}
|\partial_{\alpha}\mathrm{D}^{\varsigma}\Gamma(j,\omega,\tilde{\alpha},\gamma)-\partial_{\alpha}\mathrm{D}^{\varsigma}\Gamma(j,\omega,\alpha,\gamma)|\leq 2C| j|^{6(\mathrm{p}+1)}.
\end{align*}
Because of the fact $C^{\infty}(\mathbb{T}^2;\mathbb{R})=\cap_{s\geq0} H^s$, we conclude that for $\upsilon$  small enough,
\begin{align*}
\textstyle\frac{1}{\upsilon^2}\|\mathrm{D}^{\varsigma}L^{-1}_{\omega,\alpha+\upsilon,\gamma}y-\mathrm{D}^{\varsigma}L^{-1}_{\omega,\alpha,\gamma}y
-\upsilon\partial_{\omega}\mathrm{D}^{\varsigma}L^{-1}_{\omega,\alpha,\gamma}y\|^2_s
\leq4C^2\sum_{j\in J_1}(1+|j|^{2s})|y_{j}|^2| j|^{12(\mathrm{p}+1)}<\infty.
\end{align*}
This gives that
\begin{align*}
\mathrm{D}^{\tau}L^{-1}_{\omega,\alpha,\gamma}y=\partial_{\alpha}\mathrm{D}^{\varsigma}L^{-1}_{\omega,\alpha,\gamma}y.
\end{align*}
By using the similar procedure as above, we can obtain the continuity of the partial derivatives with respect to $\omega,\alpha,\gamma$.

Finally, let us prove formula \eqref{f:d_zeta}. The definition of $\Theta$ shows that $\mathrm{D}^{\tau}\Theta(j,\omega,\alpha,\gamma)=0$ for all $|\tau|\geq3$. It is obvious that
\begin{align*}
|\mathrm{D}^{\tau}\Theta(j,\omega,\alpha,\gamma)|\leq C_1(1+(\max\{\nu_1,\nu_2\})^4)|j|^{6},\quad\forall |\tau|\leq2,j\in J_1.
\end{align*}
As a result,
\begin{align*}
|P_{k}[\mathrm{D}^{\chi_1}\Theta,\cdots,\mathrm{D}^{\chi_{n(\delta)}}\Theta]|\leq&\textstyle
\sum_{|\zeta|\leq|\delta|}|C^{k}_{\zeta}|
|(\mathrm{D}^{\chi_1}\Theta)^{\zeta_1}||(\mathrm{D}^{\chi_{2}}\Theta)^{\zeta_2}|\cdots |(\mathrm{D}^{\chi_{n(\delta)}}\Theta)^{\zeta_{n(\delta)}}|\\
\leq&\textstyle\sum_{|\zeta|\leq|\delta|}|C^{k}_{\zeta}|C_1^{|\zeta|}(1+(\max\{\nu_1,\nu_2\})^4)^{|\zeta|}| j |^{6|\zeta|}
\\
\leq &C'_k(1+(\max\{\nu_1,\nu_2\})^4)^{|\delta|}| j|^{6|\delta|}.
\end{align*}
According to the fact $\Upsilon(\Theta)=\frac{1}{\Theta}$, it follows that $|\Upsilon^{(k)}(\Theta)|\leq\frac{C''_k}{|\Theta|^{k+1}}$. Therefore using \eqref{e3} and \eqref{f:zeta} yields that
\begin{align*}
|\mathrm{D}^{\delta}\Gamma(j,\omega,\alpha,\gamma)|{\leq}&\textstyle\sum^{|\delta|}_{k=1}\frac{C'_k(1+(\max\{\nu_1,\nu_2\})^4)^{|\delta|} C''_k}{K^{k+1}}| j|^{6|\delta|}
\leq C| j|^{6|\delta|}.
\end{align*}
Thus we get the conclusion of the lemma.
\end{proof}
\begin{prop}\label{prop:range2}
Let  $s>0$. For $w\in C^{\infty}(\mathbb{T}^2;\mathbb{R})\cap (W\cap H^s)$, then  there is an $s$-independent neighborhood $\mathcal{B}_{r_1}(\sigma_1)$ of $\sigma_1$ with $\sigma_1=(0,\omega_{j^*},0,0)$ such that the mapping $\mathbb{R}^2\times\mathcal{B}_\varrho(\omega_{j^*})\times\mathbb{R}^2\ni(\rho,\omega,\alpha,\gamma)\longmapsto w(\rho,\omega,\alpha,\gamma)\in W\cap H^s$  is $C^\infty$.
\end{prop}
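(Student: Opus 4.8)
The plan is to reduce the statement to the smoothness of the two implicit solutions $y$ and $z$ produced in Propositions \ref{prop:range} and \ref{prop:range1}, since $w(\rho,\omega,\alpha,\gamma)=z(\rho,\omega,\alpha,\gamma)+y(\rho,\omega,\alpha,\gamma,z(\rho,\omega,\alpha,\gamma))$ and the chain rule then closes the argument. The delicate point is the dependence on $(\omega,\alpha,\gamma)$ through the inverse operator $L^{-1}_{\omega,\alpha,\gamma}$: as Lemma \ref{le:smoothness3} makes explicit, the parameter derivatives of the symbol $\Gamma(j,\omega,\alpha,\gamma)$ grow like $|j|^{6|\delta|}$, so $L^{-1}_{\omega,\alpha,\gamma}$ is merely continuous, not differentiable, as a family in $\mathcal{L}(Y\cap H^s)$ — which is precisely all that Lemma \ref{le:continuous} provides. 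The remedy is to exploit the regularity in $\theta$ already established: by the bootstrap in Proposition \ref{prop:range} the solution $y(\rho,\omega,\alpha,\gamma,z)$ lies in $C^\infty(\mathbb{T}^2;\mathbb{R})=\bigcap_{s\geq0}H^s$ on an $s$-independent neighbourhood of $\sigma_0$, so the ill-behaved factor $|j|^{6|\delta|}$ is absorbed by the rapidly decaying Fourier coefficients, and Lemma \ref{le:smoothness3} applies.

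First I would treat $y$. Fix $s>0$ and work in the $s$-independent ball $\mathcal{B}_r(\sigma_0)$. Differentiating the fixed-point identity $y=L^{-1}_{\omega,\alpha,\gamma}\Pi_Y F(\omega,v(\rho)+y+z)$ once with respect to any of $\rho_1,\rho_2,\omega_1,\omega_2,\alpha,\gamma$ yields a linear equation of the schematic form $\mathrm{D}_yG_1[\partial y]=\mathcal{R}$, where $\mathrm{D}_yG_1$ is the Fr\'echet derivative used in the proof of Proposition \ref{prop:range} (equal to $\mathrm{Id}$ at $\sigma_0$, hence invertible on $Y\cap H^s$ throughout a possibly smaller $s$-independent neighbourhood by continuity), and the right-hand side $\mathcal{R}$ is assembled from $\mathrm{D}F$ and $\partial_{\omega_k}F$ (given explicitly in Lemma \ref{le:smoothness}), from $v(\rho)$, $y$, $z$ — all smooth in $\theta$ — together with at most one parameter derivative of $\Gamma$, which by \eqref{f:smoothness}--\eqref{f:d_zeta} maps $C^\infty(\mathbb{T}^2;\mathbb{R})$ boundedly into every $H^s$. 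Hence $\partial y$ exists in $Y\cap H^s$ for every $s$, i.e. $\partial y\in C^\infty(\mathbb{T}^2;\mathbb{R})$, and it depends continuously on $(\rho,\omega,\alpha,\gamma,z)$ by Lemmas \ref{le:smoothness} and \ref{le:smoothness3}. Iterating — each further differentiation produces again a linear equation with the same invertible operator $\mathrm{D}_yG_1$ on the left and a right-hand side polynomial in lower-order parameter derivatives of $y$, in derivatives of $F$, and in the operators $\mathrm{D}^{\delta}\Gamma$, all applied to $C^\infty(\mathbb{T}^2)$-functions — shows by induction on $|\delta|$ that every mixed parameter derivative of $y$ exists as a continuous map into $Y\cap H^s$. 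Thus $(\rho,\omega,\alpha,\gamma,z)\mapsto y\in Y\cap H^s$ is $C^\infty$.

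Next I would handle $z$, where the situation is easier: the space $Z$ is finite dimensional (the set $J_2$ is finite), and it is the operator $L_{\omega,\alpha,\gamma}$ itself, not its inverse, that enters $G_2$; since $L_{\omega,\alpha,\gamma}$ is a polynomial of degree $\leq 1$ in $(\omega,\alpha,\gamma)$ with fixed operator coefficients, it is $C^\infty$ as a map into $\mathcal{L}(Z\cap H^s)$ with no loss of regularity. Combining this with the $C^\infty$ smoothness of $F$ (Lemma \ref{le:smoothness}) and of $y$ just proved, the map $G_2$ of the proof of Proposition \ref{prop:range1} is $C^\infty$ in $(\rho,\omega,\alpha,\gamma,z)$; as $\mathrm{D}_zG_2(\sigma_1)=L_{\omega_{j^*},0,0}$ is invertible on $Z\cap H^s$, the Banach-space implicit function theorem gives that $z=z(\rho,\omega,\alpha,\gamma)\in Z\cap H^s$ is $C^\infty$ near $\sigma_1$, with $\mathcal{B}_{r_1}(\sigma_1)$ independent of $s$ exactly as in Proposition \ref{prop:range1}. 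Finally $w=z+y(\cdot,z)$ is $C^\infty$ by the chain rule, and $w\in C^\infty(\mathbb{T}^2;\mathbb{R})\cap(W\cap H^s)$ by Propositions \ref{prop:range}--\ref{prop:range1}.

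The main obstacle, and the only genuinely new ingredient, is the first step: transferring smoothness in $(\omega,\alpha,\gamma)$ across $L^{-1}_{\omega,\alpha,\gamma}$ in spite of the $|j|^{6|\delta|}$ growth of $\mathrm{D}^{\delta}\Gamma$. This is resolved precisely because the solutions were first shown to be $C^\infty$ in $\theta$: the polynomial growth of the symbol derivatives is harmless against functions with super-polynomially decaying Fourier coefficients, which is exactly the content of Lemma \ref{le:smoothness3}, and explains why the earlier bootstrap to $C^\infty(\mathbb{T}^2;\mathbb{R})$ in Propositions \ref{prop:range}--\ref{prop:range1} was indispensable.
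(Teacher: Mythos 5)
Your proposal is correct and follows essentially the same route as the paper: both decompose $w=y+z$, use Lemma \ref{le:smoothness3} (parameter-smoothness of $L^{-1}_{\omega,\alpha,\gamma}$ on functions already known to be $C^{\infty}$ in $\theta$ from the bootstrap in Propositions \ref{prop:range}--\ref{prop:range1}) together with Lemma \ref{le:smoothness} to get smoothness of $G_1$, apply the implicit function theorem for $y$, exploit the finite dimensionality of $Z$ for $z$, and conclude by the chain rule. Your explicit differentiation of the fixed-point identity is just the implicit-function-theorem step carried out by hand, and if anything it makes more transparent the point the paper passes over quickly, namely that the $|j|^{6|\delta|}$ loss in $\mathrm{D}^{\delta}\Gamma$ is only harmless because the solution is smooth in $\theta$.
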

\begin{proof}
By Propositions \ref{prop:range}--\ref{prop:range1}, we obtain $w=y+z$,  where $y$ and $z$ are solutions of \eqref{f:range}--\eqref{f:range2}, respectively. Then it follows from Lemma \ref{le:smoothness}, Lemma \ref{le:smoothness3}, and the definition of $G_1$ that for all $y\in C^{\infty}(\mathbb{T}^2;\mathbb{R})\cap(Y\cap H^s)$ and $z\in C^{\infty}(\mathbb{T}^2;\mathbb{R})\cap(Z\cap H^s)$, the mapping
\begin{align*}
(\rho,\omega,\alpha,\gamma,z,y)\longmapsto G_1(\rho,\omega,\alpha,\gamma,z,y)
\end{align*}
is $C^{\infty}$  with respect to $\rho,\omega,\alpha,\gamma,z$. As a consequence, the implicit function theorem implies the $C^{\infty}$ smoothness of the mapping $(\rho,\omega,\alpha,\gamma,z)\longmapsto y(\rho,\omega,\alpha,\gamma,z)$. Moreover, since the space $Z$ is finite dimensional,  we can show that for all $z\in C^{\infty}(\mathbb{T}^2)\cap(Z\cap H^s)$, the mapping
\begin{align*}
(\rho,\omega,\alpha,\gamma,z)\longmapsto G_2(\rho,\omega,\alpha,\gamma,z)
\end{align*}
varies in a  $C^{\infty}$ way with respect to $\rho,\omega,\alpha,\gamma$. Hence it follows from the  implicit function theorem  that the mapping $(\rho,\omega,\alpha,\gamma)\longmapsto z(\rho,\omega,\alpha,\gamma)$ is $C^\infty$. This ends the proof of the proposition.
\end{proof}

\section{Solutions of the bifurcation equation}\label{sec:4}

The present section is devoted to solving the bifurcation equation \eqref{d4}. In Section \ref{sec:3}, we have sought the traveling wave solutions $w=w(\rho,\omega,\alpha,\gamma)$ to the range equation \eqref{d3}. Now we have to plug both solutions $w=w(\rho,\omega,\alpha,\gamma)$ and formula \eqref{f:bir-solution} back into equation \eqref{d4}.
Then equation \eqref{d4} is equivalent to the following system
\begin{align}\label{f:system}
\left\{ \begin{aligned}
&\rho_1L_{\omega,\alpha,\gamma}e^{\mathrm{i}j^*_1\theta_1}=\frac{1}{4\pi^2}\int_{\mathbb{T}^2}F(\omega,v(\rho)+w(\rho,\omega,\alpha,\gamma))e^{-\mathrm{i}j^*_1\theta_1}\mathrm{d}\theta e^{\mathrm{i}j^*_1\theta_1},\\
&\rho_1L_{\omega,\alpha,\gamma}e^{-\mathrm{i}j^*_1\theta_1}=\frac{1}{4\pi^2}\int_{\mathbb{T}^2}F(\omega,v(\rho)+w(\rho,\omega,\alpha,\gamma))e^{\mathrm{i}j^*_1\theta_1}\mathrm{d}\theta e^{-\mathrm{i}j^*_1\theta_1},\\
&\rho_2L_{\omega,\alpha,\gamma}e^{\mathrm{i}j^*_2\theta_2}=\frac{1}{4\pi^2}\int_{\mathbb{T}^2}F(\omega,v(\rho)+w(\rho,\omega,\alpha,\gamma))e^{-\mathrm{i}j^*_2\theta_2}\mathrm{d}\theta e^{\mathrm{i}j^*_2\theta_2},\\
&\rho_2L_{\omega,\alpha,\gamma}e^{-\mathrm{i}j^*_2\theta_2}=\frac{1}{4\pi^2}\int_{\mathbb{T}^2}F(\omega,v(\rho)+w(\rho,\omega,\alpha,\gamma))e^{\mathrm{i}j^*_2\theta_2}\mathrm{d}\theta e^{-\mathrm{i}j^*_2\theta_2}.
\end{aligned}\right.
\end{align}
Moreover, we define
\begin{align*}
\mathcal{G}^{+}_k(\rho,\omega,\alpha,\gamma):=\frac{1}{4\pi^2}\int_{\mathbb{T}^2}F(\omega,v(\rho)+w(\rho,\omega,\alpha,\gamma))\cos(j^*_k\theta_k)\mathrm{d}\theta,\quad k=1,2,\\
\mathcal{G}^{-}_k(\rho,\omega,\alpha,\gamma):=\frac{1}{4\pi^2}\int_{\mathbb{T}^2}F(\omega,v(\rho)+w(\rho,\omega,\alpha,\gamma))\sin(j^*_k\theta_k)\mathrm{d}\theta,\quad k=1,2,
\end{align*}
and
\begin{align*}
\tilde{\mathcal{G}}^{\pm}_1(\rho,\omega,\alpha,\gamma):=
\left\{ \begin{aligned}
&{\mathcal{G}}^{\pm}_1(\rho_1,\rho_2,\omega,\alpha,\gamma)/{\rho_1},\quad\rho_1\neq0,\\
&\partial_{\rho_1}{\mathcal{G}}^{\pm}_1(0,\rho_2,\omega,\alpha,\gamma),\quad\rho_1=0,
\end{aligned}\right.
\\
\tilde{\mathcal{G}}^{\pm}_2(\rho,\omega,\alpha,\gamma):=
\left\{ \begin{aligned}
&{\mathcal{G}}^{\pm}_2(\rho_1,\rho_2,\omega,\alpha,\gamma)/{\rho_2},\quad\rho_2\neq0,\\
&\partial_{\rho_2}{\mathcal{G}}^{\pm}_2(\rho_1,0,\omega,\alpha,\gamma),\quad\rho_2=0.
\end{aligned}\right.
\end{align*}
Now let us check the smoothness of the above functions $\mathcal{G}^{\pm}_k,\tilde{\mathcal{G}}^{\pm}_k,k=1,2$.
\begin{lemm}\label{le:smoothness2}
The functions $\mathcal{G}^{\pm}_k,k=1,2$ are $C^{\infty}$ with respect to $\rho,\omega,\alpha,\gamma$ in an $s$-independent neighborhood of $(0,\omega_{j^*},0,0)$ satisfying
\begin{align*}
\mathcal{G}^{\pm}_1(0,\rho_2,\omega,\alpha,\gamma)=0,\quad\mathcal{G}^{\pm}_2(\rho_1,0,\omega,\alpha,\gamma)=0.
\end{align*}
In addition, the functions $\tilde{\mathcal{G}}^{\pm}_k,k=1,2$ are also $C^{\infty}$ with respect to $\rho,\omega,\alpha,\gamma$ with, $k=1,2,i=1,2$,
\begin{align*}
&\tilde{\mathcal{G}}^{\pm}_k(0,\omega,\alpha,\gamma)=0,\quad
\partial_{\alpha}\tilde{\mathcal{G}}^{\pm}_k(0,\omega,\alpha,\gamma)=0,\quad \partial_{\gamma}\tilde{\mathcal{G}}^{\pm}_k(0,\omega,\alpha,\gamma)=0,
\quad\partial_{\omega_i}\tilde{\mathcal{G}}^{\pm}_k(0,\omega,\alpha,\gamma)=0.
\end{align*}
\end{lemm}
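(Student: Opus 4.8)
The plan is to deduce the smoothness of $\mathcal{G}^{\pm}_k$ directly from the smoothness of the building blocks already established, to obtain the smoothness of the divided differences $\tilde{\mathcal{G}}^{\pm}_k$ by Hadamard's lemma, and to derive all the vanishing statements at $\rho=0$ from the single identity $v(0)+w(0,\omega,\alpha,\gamma)=0$ together with $p\ge1$.

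First I would note that $\mathcal{G}^{+}_k(\rho,\omega,\alpha,\gamma)$ is the composition of the affine map $\rho\mapsto v(\rho)$ (a polynomial in $\rho$ with values in $C^{\infty}(\mathbb{T}^2;\mathbb{R})$, by \eqref{f:bir-solution}), the $C^{\infty}$ map $(\rho,\omega,\alpha,\gamma)\mapsto w(\rho,\omega,\alpha,\gamma)$ of Proposition~\ref{prop:range2}, the $C^{\infty}$ Nemytskii map $F$ of Lemma~\ref{le:smoothness}, and the bounded linear functional $\psi\mapsto\frac{1}{4\pi^2}\int_{\mathbb{T}^2}\psi\cos(j^*_k\theta_k)\,\mathrm{d}\theta$, which is continuous on $L^2(\mathbb{T}^2)$ and a fortiori on every $H^s$. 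Since $w$, and hence $v(\rho)+w$, lies in $C^{\infty}(\mathbb{T}^2;\mathbb{R})=\bigcap_{s\ge0}H^s$ on an $s$-independent neighbourhood of $(0,\omega_{j^*},0,0)$, the composition is $C^{\infty}$ there on an $s$-independent neighbourhood; the same holds for $\mathcal{G}^{-}_k$ with $\cos$ replaced by $\sin$. For the identity $\mathcal{G}^{\pm}_1(0,\rho_2,\omega,\alpha,\gamma)=0$: by \eqref{f:bir-solution}, $v(0,\rho_2)=2\rho_2\cos(j^*_2\theta_2)$ is independent of $\theta_1$, and by Proposition~\ref{prop:travelling}(i) so is $w(0,\rho_2,\omega,\alpha,\gamma)$; hence $F(\omega,v(0,\rho_2)+w(0,\rho_2,\omega,\alpha,\gamma))=\lambda\bigl((\omega\cdot\nabla)(v(0,\rho_2)+w(0,\rho_2,\omega,\alpha,\gamma))\bigr)^{2p+1}$ is independent of $\theta_1$ as well, so integrating it against $\cos(j^*_1\theta_1)$ or $\sin(j^*_1\theta_1)$ over $\mathbb{T}^2$ yields $0$ because $j^*_1\neq0$. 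The relation $\mathcal{G}^{\pm}_2(\rho_1,0,\omega,\alpha,\gamma)=0$ follows in the same way from Proposition~\ref{prop:travelling}(ii).

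Next, for $\tilde{\mathcal{G}}^{\pm}_k$ I would invoke Hadamard's lemma. Since $\mathcal{G}^{\pm}_1$ is $C^{\infty}$ and vanishes on $\{\rho_1=0\}$, one has $\mathcal{G}^{\pm}_1(\rho,\omega,\alpha,\gamma)=\rho_1\int_0^1\partial_{\rho_1}\mathcal{G}^{\pm}_1(t\rho_1,\rho_2,\omega,\alpha,\gamma)\,\mathrm{d}t$, so that $\tilde{\mathcal{G}}^{\pm}_1(\rho,\omega,\alpha,\gamma)=\int_0^1\partial_{\rho_1}\mathcal{G}^{\pm}_1(t\rho_1,\rho_2,\omega,\alpha,\gamma)\,\mathrm{d}t$. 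This expression is $C^{\infty}$ (differentiation under the integral over the compact interval $[0,1]$), and at $\rho_1=0$ it equals $\partial_{\rho_1}\mathcal{G}^{\pm}_1(0,\rho_2,\omega,\alpha,\gamma)$, which matches the given definition of $\tilde{\mathcal{G}}^{\pm}_1$; the argument for $\tilde{\mathcal{G}}^{\pm}_2$ is identical. In particular every value and derivative of $\tilde{\mathcal{G}}^{\pm}_k$ at $\rho=0$ is the corresponding value or derivative of $\partial_{\rho_k}\mathcal{G}^{\pm}_k$ at $\rho=0$.

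It then remains to show that $\partial_{\rho_k}\mathcal{G}^{\pm}_k$, together with its first partials in $\alpha$, $\gamma$ and $\omega_i$, vanish at $\rho=0$. Differentiating $\mathcal{G}^{+}_k=\frac{\lambda}{4\pi^2}\int_{\mathbb{T}^2}\bigl((\omega\cdot\nabla)(v(\rho)+w)\bigr)^{2p+1}\cos(j^*_k\theta_k)\,\mathrm{d}\theta$ once in $\rho_k$ produces an integrand carrying the factor $\bigl((\omega\cdot\nabla)(v(\rho)+w)\bigr)^{2p}$; each further application of $\partial_\alpha$, $\partial_\gamma$ or $\partial_{\omega_i}$ lowers the exponent of the base $(\omega\cdot\nabla)(v(\rho)+w)$ by at most one (the remaining derivatives landing on $w$, on the explicit $\omega$, or on already-differentiated factors, via the product and chain rules). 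Since $p\ge1$, every resulting term still carries $(\omega\cdot\nabla)(v(\rho)+w)$ to a power at least $2p-1\ge1$. By \eqref{f:yyyy}--\eqref{f:zzzz} one has $w(0,\omega,\alpha,\gamma)=z(0,\omega,\alpha,\gamma)+y(0,\omega,\alpha,\gamma,0)=0$ and $v(0)=0$, hence $(\omega\cdot\nabla)(v(0)+w(0,\omega,\alpha,\gamma))=0$, and so all these terms vanish at $\rho=0$. This yields $\tilde{\mathcal{G}}^{\pm}_k(0,\omega,\alpha,\gamma)=0$ and $\partial_\alpha\tilde{\mathcal{G}}^{\pm}_k=\partial_\gamma\tilde{\mathcal{G}}^{\pm}_k=\partial_{\omega_i}\tilde{\mathcal{G}}^{\pm}_k=0$ at $\rho=0$. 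The only mildly delicate point is the bookkeeping in this last step, namely verifying that no single differentiation removes more than one power of $(\omega\cdot\nabla)(v+w)$; everything else is a direct consequence of Lemma~\ref{le:smoothness}, Proposition~\ref{prop:range2}, Proposition~\ref{prop:travelling}, and the identities \eqref{f:yyyy}--\eqref{f:zzzz}.
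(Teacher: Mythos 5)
Your proof is correct and follows essentially the same route as the paper: smoothness of $\mathcal{G}^{\pm}_k$ from Lemma \ref{le:smoothness} and Proposition \ref{prop:range2}, the identities $\mathcal{G}^{\pm}_1(0,\rho_2,\cdot)=\mathcal{G}^{\pm}_2(\rho_1,0,\cdot)=0$ from Proposition \ref{prop:travelling}, smoothness of $\tilde{\mathcal{G}}^{\pm}_k$ by dividing out $\rho_k$ (the paper uses a first-order Taylor expansion where you use Hadamard's integral form), and the vanishing at $\rho=0$ from $v(0)+w(0,\omega,\alpha,\gamma)=0$ via \eqref{f:yyyy}--\eqref{f:zzzz} with exponent $2p\ge 1$. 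The only cosmetic difference is the final step: instead of your power-counting under $\partial_{\alpha},\partial_{\gamma},\partial_{\omega_i}$ (which is valid, since one differentiation lowers the exponent to $2p-1\ge 1$), the paper in effect uses that $\tilde{\mathcal{G}}^{\pm}_k(0,\omega,\alpha,\gamma)$ vanishes identically in $(\omega,\alpha,\gamma)$, so its derivatives in these tangential variables vanish automatically.
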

\begin{proof}
Combining Lemma \ref{le:smoothness} with Proposition \ref{prop:range2} yields that $\mathcal{G}^{\pm}_k,k=1,2$ are $C^{\infty}$ in $\rho,\omega,\alpha,\gamma$. Moreover, it follows from Proposition \ref{prop:travelling} that
\begin{align*}
\mathcal{G}^{+}_1(0,\rho_2,\omega,\alpha,\gamma)=\frac{1}{4\pi^2}\int_{\mathbb{T}^2}&F(\omega,2\rho_2\cos( j^*_2\theta_2)+w(0,\rho_2,\omega,\alpha,\gamma)(\theta_2))\cos(j^*_1\theta_1)\mathrm{d}\theta=0.
\end{align*}
Similarly,
\begin{align*}
\mathcal{G}^{-}_1(0,\rho_2,\omega,\alpha,\gamma)=0,\quad\mathcal{G}^{\pm}_2(\rho_1,0,\omega,\alpha,\gamma)=0.
\end{align*}

It remains to investigate  the smoothness of $\tilde{\mathcal{G}}^{\pm}_k,k=1,2$. For the sake of brevity, we just verify the $C^\infty $ smoothness of $\tilde{\mathcal{G}}^{+}_1$  with respect to $\rho,\omega,\alpha,\gamma$, and that $\tilde{\mathcal{G}}^{+}_1$ together with partial derivatives of $\tilde{\mathcal{G}}^{+}_1$ with respect to $\alpha,\gamma,\omega_i,i=1,2$ vanishes at $\rho=0$. These properties on $\tilde{\mathcal{G}}^{-}_1,\tilde{\mathcal{G}}^{\pm}_2$ can be proved in a similar way.
By the Taylor expansion of $\mathcal{G}^{+}_1$ at $\rho_1=0$, we obtain
\begin{align*}
\mathcal{G}^{+}_1(\rho_1,\rho_2,\omega,\alpha,\gamma)=&\mathcal{G}^{+}_1(0,\rho_2,\omega,\alpha,\gamma)+\partial_{\rho_1}\mathcal{G}^{+}_1(0,\rho_2,\omega,\alpha,\gamma)\rho_1+O(\rho^2_1)\\
=&\partial_{\rho_1}\mathcal{G}^{+}_1(0,\rho_2,\omega,\alpha,\gamma)\rho_1+O(\rho^2_1).
\end{align*}
This leads to
\begin{align*}
\tilde{\mathcal{G}}^{+}_1(\rho_1,\rho_2,\omega,\alpha,\gamma)=
\partial_{\rho_1}{\mathcal{G}}^{+}_1(0,\rho_2,\omega,\alpha,\gamma)+O(\rho_1).
\end{align*}
Then $\tilde{\mathcal{G}}^{+}_1$ is $C^{\infty}$ with respect to $\rho,\omega,\alpha,\gamma$.  Furthermore, using formulae \eqref{f:yyyy} and \eqref{f:zzzz} yields that
\begin{align*}
w(0,\omega,\alpha,\gamma)=0,\quad\partial_{\rho_1}w(0,\omega,\alpha,\gamma)=0.
\end{align*}
Combining these with \eqref{f:bir-solution}, Lemma \ref{le:smoothness} yields that $\partial_{\rho_1}\mathcal{G}^{+}_1(0,\omega,\alpha,\gamma)$ is equal to
\begin{align*}
&\frac{1}{4\pi^2}\int_{\mathbb{T}^2}\mathrm{D}F(\omega,v(0)+w(0,\omega,\alpha,\gamma))
\partial_{\rho_1}(v(\rho)+w(\rho,\omega,\alpha,\gamma))|_{\rho=0}
\cos(j^*_1\theta_1)\mathrm{d}\theta\\
&=\frac{\lambda(2p+1)}{4\pi^2}\int_{\mathbb{T}^2}((\omega\cdot\nabla)(v(0)+w(0,\omega,\alpha,\gamma)))^{2p}
\partial_{\rho_1}(v(\rho)+w(\rho,\omega,\alpha,\gamma))|_{\rho=0}\cos(j^*_1\theta_1)\mathrm{d}\theta\\
&=0.
\end{align*}
Hence  $\tilde{\mathcal{G}}^{+}_1(0,\omega,\alpha,\gamma)=0$. Moreover,
\begin{align*}
\partial_{\omega_i}\tilde{\mathcal{G}}^{+}_1(0,\omega,\alpha,\gamma)=\partial_{\alpha}\tilde{\mathcal{G}}^{+}_1(0,\omega,\alpha,\gamma)
=\partial_{\gamma}\tilde{\mathcal{G}}^{+}_1(0,\omega,\alpha,\gamma)
=0.
\end{align*}
Thus we get the conclusion of the lemma.
\end{proof}

In view of Lemma \ref{le:smoothness2}, splitting up into real and imaginary parts yields that the above system \eqref{f:system} can be simplified to
\begin{align}\label{f:system2}
\left\{ \begin{aligned}
&-\omega_1^2(j^*_1)^2+\mu\nu^4_1(j^*_1)^4+m=\tilde{\mathcal{G}}^{+}_1(\rho,\omega,\alpha,\gamma),\\
&-\alpha\omega_1j^*_1-\gamma\omega_{1}\nu^4_1(j^*_1)^5=\tilde{\mathcal{G}}^{-}_1(\rho,\omega,\alpha,\gamma),\\
&-\omega_2^2(j^*_2)^2+\mu\nu^4_2(j^*_2)^4+m=\tilde{\mathcal{G}}^{+}_2(\rho,\omega,\alpha,\gamma),\\
&-\alpha\omega_2j^*_2-\gamma\omega_{2}\nu^4_2(j^*_2)^5=\tilde{\mathcal{G}}^{-}_2(\rho,\omega,\alpha,\gamma).
\end{aligned}\right.
\end{align}
Remark that system \eqref{f:system2} is made of four equations in the six unknowns $(\rho_1,\rho_2,\omega_1,\omega_2,\alpha,\gamma)$.
By virtue of linearizing system \eqref{f:system2} with respect to $\omega_1,\omega_2,\alpha,\gamma$ at $(0,\omega_{j^*},0,0)$, we obtain the following  matrix
\begin{align*}
A=
\left(
\begin{array}{cccccc}
-2\omega_{j^*_1}(j^*_1)^2 & 0&   0 &0\\
0&0 &   -\omega_{j^*_1}j^*_1& -\omega_{j^*_1}\nu^4_1(j^*_1)^5 \\
0& -2\omega_{j^*_2}(j^*_2)^2 &  0& 0\\
0&0 &   -\omega_{j^*_2}j^*_2& -\omega_{j^*_2}\nu^4_2(j^*_2)^5 \\
\end{array}
\right).
\end{align*}
Some simple manipulation yields that
\begin{align*}
\det A=-4\omega^2_{j^*_1}(j^*_1)^3\omega^2_{j^*_2}(j^*_2)^3(\nu^2_2(j^*_2)^2+\nu^2_1(j^*_1)^2)(\nu_2j^*_2+\nu_1j^*_1)(\nu_2j^*_2-\nu_1j^*_1).
\end{align*}
 According to the fact $j_1^*\neq \frac{\nu_2}{\nu_1}j^*_{2}$, one has $\det A\neq0$. By means of Lemma \ref{le:smoothness2}, it follows from the implicit function theorem that the mappings
\begin{align*}
\rho \longmapsto\omega_i(\rho),\quad i=1,2,\quad \rho\longmapsto\alpha(\rho),\quad\rho\longmapsto\gamma(\rho)
\end{align*}
are $C^\infty$, respectively.

As a consequence, we complete the proof of Theorem \ref{theo1}.


\end{document}